\newtheorem{thm}{Theorem}[section]
\newtheorem{lem}{Lemma}[section]
\newtheorem{coroll}{Corollary}[section]
\theoremstyle{definition}
\newtheorem{defn}{Definition}[section]
\theoremstyle{remark}
\newtheorem{remark}{Remark}[section]
\definecolor{cucol}{rgb}{0,0,0.8}
\definecolor{afcol}{rgb}{1,0,0}
\numberwithin{equation}{section}
\begin{document}
	
	\begin{frontmatter}
		
		\title{Fractional Leibniz integral rules for Riemann-Liouville and Caputo fractional derivatives and their applications}
		
		\date{}
		
		\author[]{Ismail T. Huseynov }
		\ead{ismail.huseynov@emu.edu.tr}
		\author[]{Arzu Ahmadova }
		\ead{arzu.ahmadova@emu.edu.tr}
		\author[]{Nazim I. Mahmudov \corref{cor1}}
		\ead{nazim.mahmudov@emu.edu.tr}
		\cortext[cor1]{Corresponding author}


		\address { Department of Mathematics, Faculty of Arts and Sciences, Eastern Mediterranean University, Mersin 10, Gazimagusa, TRNC, Turkey}
		
		
		
		
	\begin{abstract}
		\noindent In recent years, the theory for Leibniz integral rule in the fractional sense has not been able
		to get substantial development. As an urgent problem to be solved, we study a  Leibniz integral rule for Riemann-Liouville and Caputo type  differentiation operators with general fractional-order of $n-1 <\alpha \leq n$, $n \in \mathbb{N}$ . A rule of fractional differentiation under integral sign with general order is necessary and applicable tool for verification by substitution for candidate solutions of inhomogeneous multi-term fractional differential equations. We derive explicit analytical solutions of generalized Bagley-Torvik equations in terms of recently defined bivariate Mittag-Leffler type functions that based on fractional Green's function method and verified solutions by substitution in accordance by applying the  fractional Leibniz integral rule. Furthermore, we study an oscillator equation as a special case of differential equations with multi-orders via the  Leibniz integral rule.
		 
	\end{abstract}
	
	\begin{keyword}
		Caputo fractional derivative  \sep Riemann-Liouville fractional derivative \sep  Leibniz integral rule \sep Bagley-Torvik equation \sep oscillator equation \sep bivariate Mittag-Leffler function 
	\end{keyword}  
	
\end{frontmatter}
\section{Introduction}

Fractional calculus is a generalization of the classical differential calculus which has attracted growing attention due to the applications for many problems in science and engineering such as reaction-diffusion systems \cite{Datsko-Gafiychuk}, viscoelasticity \cite{Schiessel-Metzler-Blumen-Nonnenmacher}, electrical circuits \cite{Ahmadova-Mahmudov, Kaczorek}, control theory \cite{Mahmudov}, stochastic analysis \cite{Ahmadova A-Mahmudov N} and time-delay systems \cite{Huseynov-Mahmudov}.

One of the most frequently encountered tools in the theory of fractional calculus is furnished by the Riemann-Liouvulle $\prescript{RL}{t_{0}}{D}^{\alpha}_{t}$ and Caputo $\prescript{C}{t_{0}}{D}^{\alpha}_{t}$ fractional differentiation operators. A fractional analogue of Leibniz rule for differentiation is crucial and useful properties of these operators. Podlubny in \cite{I. Podlubny}, Baleanu and Trujillo in \cite{Baleanu-Turijillo} give a proof the Leibniz rule for Riemann-Liouville and Caputo type derivatives; these results are stated respectively below.

Assume that $0<\alpha < 1$ and $f,g:[t_{0},T]\subset\mathbb{R}\to\mathbb{R}$ with all their derivatives
are continuous. Then

\begin{align}
&\prescript{RL}{t_{0}}{D}^{\alpha}_{t}\left\lbrace f(t)g(t) \right\rbrace=
\sum_{k=0}^{\infty}\binom{\alpha}{k}f^{(k)}(t)\prescript{RL}{t_{0}}{D}^{\alpha-k}_{t}g(t),\quad t \in(t_{0},T),\\
&\prescript{C}{t_{0}}{D}^{\alpha}_{t}\left\lbrace f(t)g(t) \right\rbrace=
\sum_{k=0}^{\infty}\binom{\alpha}{k}f^{(k)}(t)\prescript{RL}{t_{0}}{D}^{\alpha-k}_{t}g(t)-\frac{(t-t_{0})^{-\alpha}}{\Gamma(1-\alpha)}f(t_0)g(t_0),\quad t \in(t_{0},T),
\end{align}
where binomial coefficients satisfy the identity:

\begin{equation*}
\binom{\alpha}{0}=1, \quad\binom{\alpha}{k}=\frac{\alpha(\alpha-1)\ldots(\alpha-k+1)}{k!}.
\end{equation*}

Another essential property of Riemann-Liouville fractional differentiation operator is obtained by Podlubny in \cite{I. Podlubny} which is called  fractional Leibniz integral rule stated as below:
\begin{equation}\label{Leibniz-integral}
\prescript{RL}{t_{0}}{D}^{\alpha}_{t}\int\limits_{0}^{t}K(t,\tau)\mathrm{d}\tau= \lim\limits_{\tau\to t-0}\prescript{RL}{\tau}{D}^{\alpha-1}_{t}K(t,\tau)+\int\limits_{0}^{t}\prescript{RL}{\tau}{D}^{\alpha}_{t}K(t,\tau)\mathrm{d}\tau, \quad \alpha \in(0,1),
\end{equation}
where lower terminal $t_{0}=0$.

The following important particular case for convolution operator whenever we have $K(t-\tau)f(\tau)$ instead of $K(t,\tau)$, the relationship \eqref{Leibniz-integral} takes the form:

\begin{equation}\label{Leibniz-integral-convolution}
\prescript{RL}{0}{D}^{\alpha}_{t}\int\limits_{0}^{t}K(t-\tau)f(\tau)\mathrm{d}\tau= \lim\limits_{\tau\to t-0}f(t-\tau)\prescript{RL}{\tau}{D}^{\alpha-1}_{t}K(\tau)+\int\limits_{0}^{t}\prescript{RL}{0}{D}^{\alpha}_{\tau}K(\tau)f(t-\tau)\mathrm{d}\tau, \quad \alpha \in(0,1).
\end{equation}

It is important to note that the above tools are necessary for checking by substitution method for fractional differential equations with variable and constant coefficients.

Fractional differential equations (FDEs) are differential equations involving derivatives of arbitrary (fractional) order. FDEs provide one of the most accurate tools to describe hereditary properties of natural phenomenon. Using fractional derivatives instead of integer-order derivatives allows us for the modeling of a wider variety of behaviours.
However, sometimes, FDEs involving one fractional order of differentiation are not sufficient to demonstrate physical processes. Therefore, recently, several authors have studied more general types of fractional-order models, such as multi-term equations \cite{Hilfer-Luchko-Tomovski,Luchko-Gorenflo,Bazhlekova,Pak-Choi-Sin-Ri,Diethelm-Ford} and multi-dimensional systems \cite{Kaczorek,Ahmadova A-Mahmudov N,Huseynov-Mahmudov,Bonilla-Rivero-Trujillo,Diethelm-Siegmund-Tuan,Kaczorek-Idczak,H-A-F-M}.

Multi-term differential equations with fractional-order have been studied 
and solved using various mathematical methods, of which we mention a few as follows. Luchko and several collaborates \cite{Hilfer-Luchko-Tomovski,Luchko-Gorenflo} have used the method of operational calculus to solve multi-term FDEs with constant coefficients with regard to various types of fractional derivatives. Bazhlekova \cite{Bazhlekova} has considered multi-term fractional relaxation equations with Caputo fractional derivatives by using a Laplace transform technique, and studied the fundamental and impulse-response solutions of the initial value problem (IVP). Kaczorek and Idczak \cite{Kaczorek-Idczak} have considered existence and uniqueness results and a Cauchy formula for the analytical solution of  the time-varying linear system with Caputo fractional derivative. Pak et al. \cite{Pak-Choi-Sin-Ri} has recently investigated multi-term FDEs with variable coefficients using a new method to construct analytical solutions.

As one of the important special cases of multi-term FDEs, Bagley-Torvik equations have been discussed in terms of analytical \cite{I. Podlubny,Bagley-Torvik,M-H-A-A,Wang-Wang} and numerical methods \cite{Kai-Ford,Srivastava et.al.}. Bagley-Torvik equations with $\frac{1}{2}$-order or $\frac{3}{2}$-order derivative describe the motion of real physical systems in a Newtonian fluid \cite{Bagley-Torvik}. In 1984, Bagley and Torvik \cite{Bagley-Torvik} have considered the following Cauchy problem under the homogeneous initial conditions:
\begin{align}\label{B-T}
&m y''(r)+\frac{2S\sqrt{\mu \rho}}{m}\left( \prescript{C}{}{D}^{\alpha}_{0_{+}}y\right)(r)+k y(r)=g(r), \quad r>0,\\
&y(0)=y'(0)=0 \nonumber,
\end{align}
where $\left( \prescript{C}{}{D}^{\alpha}_{0_{+}}y\right)(\cdot)$ is Caputo fractional differential operator of order $\alpha=\frac{1}{2}$ or $\alpha=\frac{3}{2}$, $S$- an area of the rigid plate, $\mu$- viscosity, $\rho$-fluid density, $m$-mass, $k$- spring of stiffness and $g(\cdot)$-an external force. An analytical solution of \eqref{B-T} has introduced by Podlubny \cite{I. Podlubny} in the form:
\begin{equation}
y(r)=\int\limits_{0}^{r}G(r-\tau)g(\tau)\mathrm{d}\tau, \quad r>0,
\end{equation} 
with
\begin{equation*}
G(r)=\frac{1}{m}\sum_{l=0}^{\infty}\frac{(-1)^{l}}{l!}\left( \frac{k}{m}\right)^{l}r^{2l+1}\mathcal{E}^{(l)}_{\frac{1}{2},2+\frac{3l}{2}}\left(\frac{-2S\sqrt{\mu \rho}}{m}\sqrt{r} \right), 
\end{equation*}
where $\mathcal{E}^{(l)}_{\alpha,\beta}(\cdot)$ is the $l$th-derivative of two-parameter Mittag-Leffler function.
In \cite{M-H-A-A}, Mahmudov et al. have studied explicit analytical solutions for several families of generalized multidimensional Bagley-Torvik equations with permutable matrices. In \cite{Wang-Wang}, Wang et al. have modified the following Bagley-Torvik equation
\begin{equation}\label{B-T-1}
y''(r)+\mu\left( \prescript{C}{}{D}^{\alpha}_{0_{+}}y\right)(r)+y(r)=0, \quad \mu,r>0,
\end{equation}
where $\alpha=\frac{1}{2}$ or $\alpha=\frac{3}{2}$, to the sequential FDEs and introduced a general solution of \eqref{B-T-1} by using the technique related to characteristic roots. The numerical point of view Diethelm and Ford in \cite{Kai-Ford} have used linear multi-steps , Srivastava et al. in \cite{Srivastava et.al.} have applied wavelet approach to obtain approximate solutions of the Bagley-Torvik equations.

Therefore, the plan of this paper is systematized as below. Section \ref{Sec:prel} is a mathematical preliminary section where we recall main definitions and results from fractional calculus, special functions and necessary lemmas from fractional differential equations. Section \ref{generalized Leibniz rule} is devoted to formulating the  Leibniz integral rule for higher order derivatives of Lebesgue integration which depends on parameter in classical sense. In Section \ref{Generealized fractional Leibniz rules}, we have introduced fractional differentiation under the integral sign in Riemann-Liouville and Caputo sense. Moreover, we have considered the derivative of convolution operator which has more importance for differential equations with classical or fractional order. In Section \ref{Green's function method}, we have acquired explicit analytical solutions of Bagley-Torvik equations with Riemann-Liouville and Caputo type fractional derivatives in terms of recently defined bivariate Mittag-Leffler type functions in accordance with fractional Green's function method and tested the candidate solutions by using our newly defined tools which are natural generalization of well-known Leibniz integral rule. At the end, in Section \ref{concl} we give the conclusions and future directions.

\section{Mathematical preliminaries}\label{Sec:prel}
We embark on this section by briefly introducing the essential structure of fractional calculus, special functions and fractional differential operators (for the more salient details on the matter, see the textbooks \cite{I. Podlubny,Diethelm,Gorenflo,Miller-Ross,Oldham-Spanier, Samko-Kilbas-Marichev}). We begin by defining some notations, Riemann-Liouville and Caputo fractional differentiation operators which are fundamental for fractional calculus and fractional differential equations.

Let $\mathbb{R}^{n}$ be Euclidean space and $\mathbb{J}$ be some interval of the real line, i.e. $\mathbb{J}\subset \mathbb{R}$. We suppose that $\mathbb{J}=[t_{0},T]$ for some $t\in \mathbb{\hat{J}}$ and denote $\mathbb{\hat{J}}=(t_{0},T)$. Assume that $f: \mathbb{J}\to \mathbb{R}$ is an absolutely continuous function.  

\begin{defn}[\label{defRL}\cite{Miller-Ross,Oldham-Spanier,Samko-Kilbas-Marichev}]

	The Riemann-Liouville derivative operator of fractional order $n-1<\alpha \leq n$ for $n\in\mathbb{N}$ is defined by
	\begin{equation}
	\left(\prescript{RL}{t_{0}}{D}^{\alpha}_{t}g\right) (t)=\frac{\mathrm{d}^n}{\mathrm{d}t^n}\left(\prescript{}{t_{0}}{I}^{n-\alpha}_{t}g\right) (t)=\frac{1}{\Gamma(n-\alpha)}\frac{\mathrm{d}^n}{\mathrm{d}t^n}\int\limits^{t}_{t_{0}}(t-s)^{n-\alpha-1}g(s)\mathrm{d}s, \quad t\in \mathbb{\hat{J}},
	\end{equation}
\end{defn}
where $\prescript{}{t_{0}}{I}^{\alpha}_{t}$ is the Riemann-Liouville integral operator of order $\alpha>0$ which is defined by
\begin{equation}
\left(\prescript{}{t_{0}}{I}^{\alpha}_{t}g\right) (t)=\frac{1}{\Gamma(\alpha)}\int\limits_{t_{0}}^{t}(t-s)^{\alpha-1}g(s)\mathrm{d}s, \quad t\in \mathbb{\hat{J}}.
\end{equation}
Furthermore, the following equality holds true:
\begin{equation}\label{R-L der-R-L int}
\left(\prescript{RL}{t_{0}}{D}^{\alpha}_{t}\left(\prescript{}{t_{0}}{I}^{\alpha}_{t}g\right)\right) (t)=g(t), \quad \alpha>0, \quad t \in \mathbb{\hat{J}}.
\end{equation}

\begin{defn}[\label{defCaputo}\cite{I. Podlubny,Diethelm,Kilbas-Srivastava-Trujillo}]
	 The Caputo derivative operator of fractional order $n-1<\alpha \leq n$ for $n\in\mathbb{N}$ is defined by
	\begin{equation}\label{Caputoder}
	\left( \prescript{C}{t_{0}}D^{\alpha}_{t}g\right) (t)=\prescript{}{t_{0}}I^{n-\alpha}_{t}\left(\frac{\mathrm{d}^n}{\mathrm{d}t^n}g\right)(t)=\frac{1}{\Gamma(n-\alpha)}\int\limits_{t_{0}}^{t}(t-s)^{n-\alpha-1}\frac{\mathrm{d}^n}{\mathrm{d}s^n}g(s)\mathrm{d}s, \quad t\in \mathbb{\hat{J}}.
	\end{equation}

	Moreover, the next relation holds true:
	\begin{equation}\label{C der-R-L int}
	\left(\prescript{C}{t_{0}}{D}^{\alpha}_{t}\left(\prescript{}{t_{0}}{I}^{\alpha}_{t}g\right)\right) (t)=g(t), \quad \alpha>0, \quad t \in \mathbb{\hat{J}}.
	\end{equation}
\end{defn}

The relationship between  Riemann-Liouville and Caputo fractional derivatives are as follows \cite{I. Podlubny}:
\begin{equation} \label{relationship}
\left( \prescript{C}{t_{0}}D^{\alpha}_{t}g\right) (t)=\left( \prescript{RL}{t_{0}}D^{\alpha}_{t}g\right) (t)-\sum_{k=0}^{n-1}\frac{(t-t_{0})^{k-\alpha}f^{(k)}(t_{0})}{\Gamma(k-\alpha+1)}, \quad n-1<\alpha\leq n, \quad n \in \mathbb{N}.
\end{equation}

The following results are useful in solving fractional differential equations.
\begin{defn}[\cite{Sneddon,Whittaker-Watson}]
A function $g$ is said to be \textit{exponentially bounded} on $[0,\infty)$ if it satisfies an inequality of the form:

\begin{equation*}
|g(t)| \leq M e^{\sigma t}, \quad t\geq T,
\end{equation*}
for some real constants $M>0$, $T>0$ and $\sigma\in \mathbb{R}$.
\end{defn}

\begin{defn}[\cite{Sneddon,Whittaker-Watson}] 
If $g:[0,\infty)\to\mathbb{R}$ is exponentially bounded for $t\geq0$, then the Laplace integral transform $\mathscr{L}\left\lbrace g(t)\right\rbrace (s)$ defined by
\end{defn}
\begin{equation*}
G(s)=\mathscr{L}\left\lbrace g(t)\right\rbrace(s) =\int\limits_{0}^{\infty}e^{-st}g(t)\mathrm{d}t,
\end{equation*}
exists for $s\in\mathbb{C}$ and is an analytic function of $s$ for $\Re(s)>0$ and Laplace inversion formula is defined as
\begin{equation*}
\mathscr{L}^{-1}\left\lbrace G(s)\right\rbrace (t)\coloneqq\frac{1}{2\pi i}\int\limits_{L}e^{st}G(s)ds,
\end{equation*}
where  $g(t)=\mathscr{L}^{-1}\left\lbrace G(s)\right\rbrace (t)$, $t\geq 0$ and $L$ is a closed contour which enclosing the poles (singularities) of $g$.

\begin{defn} [\cite{I. Podlubny}]
	The Laplace integral transform of Riemann-Liouville fractional derivative of order\\
	$\alpha \in (n-1,n]$, $n \in \mathbb{N}$ is given by \cite{I. Podlubny}:
	\begin{equation}\label{lap-RL}
	\mathscr{L}\left\lbrace \left( \prescript{RL}{0}D^{\alpha}_{t}y\right)(t)\right\rbrace (s) =s^{\alpha}Y(s)-\sum_{k=1}^{n}s^{k-1} \left( \prescript{RL}{0}D^{\alpha-k}_{t} y\right)(0),
	\end{equation}
	where $Y(s)$ represents the Laplace transform of the function $y(t)$ . 
\end{defn}
\begin{remark}[\cite{I. Podlubny}\label{remLap-1}] In the special cases, the Laplace integral transform of the Riemann-Liouville fractional differentiation is:\\
	
	\begin{itemize}
		\item If $\alpha \in (0,1]$, then 
		\begin{equation*}
		\mathscr{L} \left\lbrace  \left(\prescript{RL}{0}D^{\alpha}_{t}y\right)(t)\right\rbrace (s)=s^{\alpha}Y(s)-\left( \prescript{RL}{0}D^{\alpha-1}_{t}y\right)(0).
		\end{equation*}
		\item  If $\alpha \in (1,2]$, then
		\begin{equation*}
		\mathscr{L}\left\lbrace \left( \prescript{RL}{0}D^{\alpha}_{t}y\right)(t)\right\rbrace (s)=s^{\alpha}Y(s)- \left( \prescript{RL}{0}D^{\alpha-1}_{t}y\right)(0)-s \left( \prescript{RL}{0}D^{\alpha-2}_{t}y\right)(0).
		\end{equation*}
	\end{itemize}
\end{remark}

\begin{defn} 
	The Laplace integral transform of Caputo fractional derivative of order $\alpha \in (n-1,n]$, $n \in \mathbb{N}$ is given by \cite{I. Podlubny}:
	\begin{equation}\label{lap-C}
	\mathscr{L}\left\lbrace \left( \prescript{C}{0}D^{\alpha}_{t}y\right)(t)\right\rbrace (s) =s^{\alpha}Y(s)-\sum_{k=0}^{n-1}s^{\alpha-k-1}\prescript{}{}y^{(k)}(0),
	\end{equation}
	where $Y(s)$ represents the Laplace transform of the function $y(t)$ . 
\end{defn}
\begin{remark}\label{remLap-2} In the special cases, the Laplace integral transform of the Caputo fractional differentiation is:\\
	
	\begin{itemize}
		\item If $\alpha \in (0,1]$, then 
		\begin{equation*}
		\mathscr{L} \left\lbrace  \left(\prescript{C}{0}D^{\alpha}_{t}y\right)(t)\right\rbrace (s)=s^{\alpha}Y(s)-s^{\alpha-1}y_{0}, \quad \text{where}\quad y_{0}=y(0).
		\end{equation*}
		\item  If $\alpha \in (1,2]$, then
		\begin{equation*}
		\mathscr{L}\left\lbrace \left( \prescript{C}{0}D^{\alpha}_{t}y\right)(t)\right\rbrace (s)=s^{\alpha}Y(s)-s^{\alpha-1}y_{0}-s^{\alpha-2}y^{\prime}_{0}, \quad \text{where} \quad y_{0}=y(0) \quad \text{and} \quad y^{\prime}_{0}=y^{\prime}(0).
		\end{equation*}
	\end{itemize}
\end{remark}
\begin{defn}[\cite{Sneddon,Whittaker-Watson}]
Let $f$ and $g$ be both piece-wise continuous functions on $[0,\infty)$. 
Then the integral in
\begin{equation*}
f\ast g \coloneqq(f\ast g)(t)=\int\limits_0^{t}f(t-s) g(s)ds,
\end{equation*}
is called the convolution operator of two functions $f$ and $g$ which is well-defined and finite for any $t\geq 0$ and it has the commutativity property:
\begin{equation*}
f\ast g=g\ast f.
\end{equation*}
\end{defn}
\begin{thm} [\cite{Sneddon,Whittaker-Watson}]
Suppose that $f$ and $g$ are piece-wise continuous and exponentially bounded functions on $[0,\infty)$.
Then the Laplace transform of convolution operator of two functions $f$ and $g$, given on $[0,\infty)$, has the following property :
\begin{equation*}
\mathscr{L}\left\lbrace \left( f\ast g\right) (t)\right\rbrace(s)=\mathscr{L} \left\lbrace f(t)\right\rbrace(s) \mathscr{L}\left\lbrace g(t)\right\rbrace (s), \quad s\in\mathbb{C}.
\end{equation*}
\end{thm}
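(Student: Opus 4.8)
The plan is to establish the identity by writing the product of the two Laplace transforms as an iterated integral over the first quadrant, performing the change of variables that converts the pair of dummy variables into the convolution variable, and then invoking Fubini's theorem to interchange the order of integration. Throughout, I would work in the half-plane $\Re(s)>\sigma_{0}$, where $\sigma_{0}$ is chosen larger than the exponential growth rates of both $f$ and $g$; on this half-plane both transforms are finite, and the resulting equality extends to the full domain of joint analyticity by the identity theorem.

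First I would start from the definition and write
\[
\mathscr{L}\{f(t)\}(s)\,\mathscr{L}\{g(t)\}(s)=\left(\int_{0}^{\infty}e^{-s\sigma}f(\sigma)\,\mathrm{d}\sigma\right)\left(\int_{0}^{\infty}e^{-s\tau}g(\tau)\,\mathrm{d}\tau\right)=\int_{0}^{\infty}\int_{0}^{\infty}e^{-s(\sigma+\tau)}f(\sigma)g(\tau)\,\mathrm{d}\tau\,\mathrm{d}\sigma,
\]
the last step being the elementary fact that a product of two absolutely convergent integrals equals the iterated integral of the product. Next, for each fixed $\sigma$ I would substitute $t=\sigma+\tau$ in the inner integral, so that $\tau=t-\sigma$, $\mathrm{d}\tau=\mathrm{d}t$, and the range $\tau\in(0,\infty)$ becomes $t\in(\sigma,\infty)$; this yields
\[
\int_{0}^{\infty}\int_{\sigma}^{\infty}e^{-st}f(\sigma)g(t-\sigma)\,\mathrm{d}t\,\mathrm{d}\sigma.
\]

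The key step is to recognize that this is an integral over the wedge $\{(\sigma,t):0\le\sigma\le t<\infty\}$ and to interchange the order of integration, which after swapping gives
\[
\int_{0}^{\infty}e^{-st}\left(\int_{0}^{t}f(\sigma)g(t-\sigma)\,\mathrm{d}\sigma\right)\mathrm{d}t=\int_{0}^{\infty}e^{-st}(f\ast g)(t)\,\mathrm{d}t=\mathscr{L}\{(f\ast g)(t)\}(s),
\]
which completes the proof.

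The main obstacle is the rigorous justification of this interchange. For that I would appeal to Fubini's theorem, which requires the absolute convergence of the double integral $\int_{0}^{\infty}\int_{0}^{\infty}\lvert e^{-s(\sigma+\tau)}f(\sigma)g(\tau)\rvert\,\mathrm{d}\tau\,\mathrm{d}\sigma$. Here the exponential-boundedness hypothesis is essential: using $\lvert f(\sigma)\rvert\le M_{1}e^{\sigma_{1}\sigma}$ and $\lvert g(\tau)\rvert\le M_{2}e^{\sigma_{2}\tau}$ for large arguments, together with piecewise continuity (hence boundedness) on compact sets, the integrand is dominated by a product of two decaying exponentials as soon as $\Re(s)>\max(\sigma_{1},\sigma_{2})$, so the double integral converges and Fubini applies. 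I would also remark that the commutativity property $f\ast g=g\ast f$ recorded above makes the factorization manifestly symmetric in $f$ and $g$, consistent with the symmetry of the left-hand side.
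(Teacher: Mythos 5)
Your proof is correct and complete: writing the product of the transforms as a double integral, changing variables to the wedge $\{(\sigma,t):0\le\sigma\le t<\infty\}$, and justifying the interchange by Fubini using exponential boundedness together with piecewise continuity is precisely the standard argument for this classical result; the paper itself offers no proof, quoting the theorem from the cited textbooks, and your argument is the one found there. If anything, your treatment is more careful than the paper's statement, which loosely asserts the identity for $s\in\mathbb{C}$, whereas you correctly establish it on a half-plane $\Re(s)>\max(\sigma_{1},\sigma_{2})$ where both transforms converge absolutely and then extend by analyticity.
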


The Mittag-Leffler function is a generalization of the exponential function, first proposed in 1903 \cite{1903} as a single-parameter function of one variable, defined using a convergent infinite series. Extensions to two, three and multi-parameters are well known and thoroughly studied in textbooks such as \cite{Gorenflo,Kiryakova V.} which are involving single power series in one variable \cite{Kiryakova V. S.,Virginia K.,K. Virginia}. Extensions to two, three, or more variables, involving correspondingly double, triple, or multiple power series, have been studied more recently \cite{H-A-F-M, Fernandez-Kurt-Ozarslan, H-A-O-M, Saxena-Kalla-Saxena}.

\begin{defn}[\cite{1903}] 
	The classical Mittag-Leffler function is defined by
	\begin{equation*}
	E_{\alpha}(t)= \sum_{i=0}^{\infty}\frac{t^{i}}{\Gamma(i \alpha +1)}, \quad  \alpha>0, t\in\mathbb{R}.
	\end{equation*}
	\begin{remark}[\cite{Diethelm}]
		The Mittag-Leffler functions are often used in a form where the variable inside the brackets is not $t$  but a fractional power $t^{\alpha}$, or even a constant multiple $\lambda t^{\alpha}$, as follows:
		\begin{equation*}
		E_{\alpha}(\lambda t^{\alpha})= \sum_{i=0}^{\infty}\frac{\lambda^i t^{i\alpha}}{\Gamma(i \alpha +1)}, \quad  \alpha >0, t, \lambda \in\mathbb{R}.
		\end{equation*}
	\end{remark}

	The two-parameter Mittag-Leffler function  \cite{Gorenflo} is given by
	\begin{equation*}
	E_{\alpha,\beta}(t)= \sum_{i=0}^{\infty}\frac{t^{i}}{\Gamma(i \alpha +\beta)}, \quad  \alpha>0, \beta\in\mathbb{R}, t\in\mathbb{R}.
	\end{equation*}
	
	The $l$-th derivative of two-parameter Mittag-Leffler function \cite{Gorenflo} is defined by
	\begin{equation*}
   \frac{d^{l}}{dt^{l}}E_{\alpha,\beta}(t)=E^{(l)}_{\alpha,\beta}(t)= \sum_{i=0}^{\infty}\frac{(i+l)!}{i!}\frac{t^{i}}{\Gamma(i \alpha+l\alpha +\beta)}, \quad l\in\mathbb{N},  \alpha>0, \beta\in\mathbb{R}, t\in\mathbb{R}.
\end{equation*}

	The three-parameter Mittag-Leffler function \cite{Prabhakar} is determined by
	\begin{equation*}
	E_{\alpha,\beta}^{\gamma}(t)= \sum_{i=0}^{\infty}\frac{(\gamma)_i}{\Gamma(i \alpha +\beta)}\frac{t^{i}}{i!}, \quad  \alpha >0,\beta,\gamma\in\mathbb{R},  t\in\mathbb{R},
	\end{equation*}
	where $(\gamma)_i$ is the Pochhammer symbol denoting $\frac{\Gamma(\gamma+i)}{\Gamma(\gamma)}$. These series are convergent, locally uniformly in $\tau$ , provided the $\alpha>0$ condition is satisfied. Note that
	\[
	E_{\alpha,\beta}^1(t)=E_{\alpha,\beta}(t),\quad E_{\alpha,1}(t)=E_{\alpha}(t),\quad E_1(t)=\exp(t).
	\]
\end{defn}

The next lemma includes Laplace integral transform of three-parameter Mittag-Leffler function which will be used throughout the proof of Lemma \ref{lem:2}.
\begin{lem} \label{lem:1}
	For $\alpha > \beta >0$, $\lambda \in \mathbb{R}$, $l \in \mathbb{N}_{0}=\left\lbrace 0,1,2, \dots \right\rbrace $ and $\Re(s)>0$, we have:
	\begin{align*}
	\mathscr{L}^{-1}\Bigl\{ \frac{1}{(s^{\alpha}-\lambda s^{\beta})^{l+1}}\Bigr\}(\tau)&= t^{(l+1)\alpha-1}\sum_{k=0}^{\infty}\binom{l+k}{k}\frac{\lambda^{k}t^{k(\alpha-\beta)}}{\Gamma(k(\alpha-\beta)+(l+1)\alpha)} \\
	&\coloneqq t^{(l+1)\alpha-1}E^{l+1}_{\alpha-\beta, (l+1)\alpha}(\lambda t^{\alpha-\beta}).
	\end{align*}
\end{lem}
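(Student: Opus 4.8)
The plan is to reduce the stated inverse Laplace transform to an elementary term-by-term inversion after factoring the denominator. First I would write $s^{\alpha}-\lambda s^{\beta}=s^{\beta}\bigl(s^{\alpha-\beta}-\lambda\bigr)$, so that
\begin{equation*}
\frac{1}{(s^{\alpha}-\lambda s^{\beta})^{l+1}}=\frac{1}{s^{\beta(l+1)}\bigl(s^{\alpha-\beta}-\lambda\bigr)^{l+1}}.
\end{equation*}
For $\Re(s)$ large enough that $|\lambda|\,|s|^{-(\alpha-\beta)}<1$, I would then expand the second factor by the generalized binomial series $(1-x)^{-(l+1)}=\sum_{k=0}^{\infty}\binom{l+k}{k}x^{k}$ with $x=\lambda s^{-(\alpha-\beta)}$, obtaining
\begin{equation*}
\frac{1}{(s^{\alpha}-\lambda s^{\beta})^{l+1}}=\sum_{k=0}^{\infty}\binom{l+k}{k}\lambda^{k}\,s^{-[(l+1)\alpha+k(\alpha-\beta)]},
\end{equation*}
where the exponent collapses because $\beta(l+1)+(\alpha-\beta)(l+1)=(l+1)\alpha$.

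Next I would invert each power of $s$ using the elementary identity $\mathscr{L}^{-1}\{s^{-\rho}\}(t)=t^{\rho-1}/\Gamma(\rho)$, valid here since $\rho=(l+1)\alpha+k(\alpha-\beta)>0$ for every $k$ (using $\alpha>\beta>0$). Collecting the common factor $t^{(l+1)\alpha-1}$ yields exactly
\begin{equation*}
t^{(l+1)\alpha-1}\sum_{k=0}^{\infty}\binom{l+k}{k}\frac{\lambda^{k}t^{k(\alpha-\beta)}}{\Gamma(k(\alpha-\beta)+(l+1)\alpha)},
\end{equation*}
and the identity $\binom{l+k}{k}=(l+1)_{k}/k!$ lets me recognize this sum as the three-parameter Mittag-Leffler function $E^{l+1}_{\alpha-\beta,(l+1)\alpha}(\lambda t^{\alpha-\beta})$, which is the claimed closed form.

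The step requiring genuine care — and the main obstacle — is the justification of the term-by-term inverse transform, since the binomial expansion of the denominator is only valid on the region $|s|^{\alpha-\beta}>|\lambda|$ of the Bromwich contour. To make this rigorous I would argue in the forward direction instead: the candidate series on the right converges for all $t\ge 0$ (it is entire in the variable $t^{\alpha-\beta}$, because the reciprocal Gamma factors decay super-exponentially when $\alpha-\beta>0$) and is exponentially bounded, so its Laplace transform exists and may be computed termwise for $\Re(s)$ sufficiently large; this reproduces the convergent series above, whose sum is $1/(s^{\alpha}-\lambda s^{\beta})^{l+1}$. By the uniqueness of the Laplace transform, together with analytic continuation in $s$ to $\Re(s)>0$, the two functions coincide, which establishes the inversion formula. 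I expect the only delicate points to be the uniform convergence needed to transform the series termwise and the verification of exponential boundedness, both of which follow from standard Mittag-Leffler growth estimates.
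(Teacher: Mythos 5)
Your proposal is correct and follows essentially the same route as the paper: the same binomial-series expansion $\frac{1}{(s^{\alpha}-\lambda s^{\beta})^{l+1}}=\sum_{k\ge 0}\binom{l+k}{k}\lambda^{k}s^{-[(l+1)\alpha+k(\alpha-\beta)]}$ followed by termwise inversion of $s^{-\rho}$ and identification of the resulting series with the three-parameter Mittag-Leffler function. Your forward-direction justification (computing the Laplace transform of the candidate series termwise for large $\Re(s)$ and invoking uniqueness) is a sound, slightly more careful variant of the paper's closing remark, which instead performs the inversion directly and removes the auxiliary condition $|s|^{\alpha-\beta}>|\lambda|$ by analytic continuation of both sides.
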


\begin{proof}
	By using the Taylor series representation of $\frac{1}{(1-t)^{l+1}}, l \in \mathbb{N}_{0}$ of the form
	\begin{equation*}
	\frac{1}{(1-t)^{l+1}}= \sum_{k=0}^{\infty}\binom{l+k}{k}t^{k}, \quad |t|<1,
	\end{equation*}
	we achieve that
	\begin{align*}
	\frac{1}{(s^{\alpha}-\lambda s^{\beta})^{l+1}}=\frac{1}{(s^{\alpha})^{l+1}}\frac{1}{(1-\frac{\lambda}{s^{\alpha-\beta}} )^{l+1}}&=\frac{1}{s^{(l+1)\alpha}}\sum_{k=0}^{\infty}\binom{l+k}{k}\Big(\frac{\lambda}{s^{\alpha-\beta}}\Big)^{k}\\
	&=\sum_{k=0}^{\infty}\binom{l+k}{k}\frac{\lambda^{k}}{s^{k(\alpha-\beta)+(l+1)\alpha}}.
	\end{align*}
	Taking inverse Laplace transform of the above function, we get the desired result:
	\begin{align*}
	\mathscr{L}^{-1}\Bigl\{ \frac{1}{(s^{\alpha}-\lambda s^{\beta})^{l+1}}\Bigr\}(t)&=\sum_{k=0}^{\infty}\lambda^{k}\binom{l+k}{k}\mathscr{L}^{-1}\Bigr\{\frac{1}{s^{k(\alpha-\beta)+(l+1)\alpha}}\Bigr\}(t)\\
	&=\sum_{k=0}^{\infty}\lambda^{k}\binom{l+k}{k}\frac{t^{k(\alpha-\beta)+(l+1)\alpha-1}}{\Gamma(k(\alpha-\beta)+(l+1)\alpha)}\\
	&=t^{(l+1)\alpha-1}E^{l+1}_{\alpha-\beta, (l+1)\alpha}(\lambda t^{\alpha-\beta}),
	\end{align*}
	 which is the required result. We have required an extra condition on $s$ for convergence of the binomial type series in the Laplace domain, namely that
	\begin{align*}
	s^{\alpha-\beta}>|\lambda|.
	 \end{align*}
	 However, this condition can be removed at the end, by analytic continuation of both sides of the identity, to give the desired result for all $s \in\mathbb{C}$ satisfying $\Re(s)>0$. The proof is complete.
 \end{proof}
\begin{defn}[\cite{Fernandez-Kurt-Ozarslan}] \label{Def:bML}
	We consider the bivariate Mittag-Leffler function defined by
	\begin{equation}
	\label{bML4}
	E_{\alpha,\beta,\gamma}^{\delta}(u,v)=\sum_{l=0}^{\infty}\sum_{k=0}^{\infty}\frac{(\delta)_{l+k}}{\Gamma(l\alpha+k\beta+\gamma)}\frac{u^lv^k}{l!k!},\quad \alpha,\beta>0,\gamma,\delta\in\mathbb{R},u,v\in\mathbb{R}.
	\end{equation}
\end{defn}

If we write $u=\lambda t^{\alpha}$ and $v=\mu t^{\beta}$ for a single variable $t$, and multiply by a power function $t^{\gamma-1}$, we derive the following univariate version:
\begin{equation}\label{bivtype}
t^{\gamma-1}	E_{\alpha,\beta,\gamma}^{\delta}(\lambda t^{\alpha},\mu t^{\beta})=\sum_{l=0}^{\infty}\sum_{k=0}^{\infty}\frac{(\delta)_{l+k}}{\Gamma(l\alpha+k\beta+\gamma)}\frac{\lambda^l\mu^k}{l!k!}t^{l\alpha+k\beta+\gamma-1}.
\end{equation}
Note that when $\delta=1$,
\begin{align*}
E_{\alpha,\beta,\gamma}^{1}(\lambda t^{\alpha},\mu t^{\beta})&=\sum_{l=0}^{\infty}\sum_{k=0}^{\infty}\frac{(1)_{l+k}}{\Gamma(l\alpha+k\beta+\gamma)}\frac{\lambda^l\mu^k}{l!k!}t^{l\alpha+k\beta+\gamma-1}\\
&=\sum_{l=0}^{\infty}\sum_{k=0}^{\infty}\frac{(l+k)!}{l!k!}\frac{\lambda^l\mu^k}{\Gamma(l\alpha+k\beta+\gamma)}t^{l\alpha+k\beta+\gamma-1}\\
&=\sum_{l=0}^{\infty}\sum_{k=0}^{\infty}\binom{l+k}{k}\frac{\lambda^l\mu^k}{\Gamma(l\alpha+k\beta+\gamma)}t^{l\alpha+k\beta+\gamma-1}.
\end{align*}
For simplicity, we denote $E_{\alpha,\beta,\gamma}^{1}(\lambda t^{\alpha},\mu t^{\beta})\coloneqq E_{\alpha,\beta,\gamma}(\lambda t^{\alpha},\mu t^{\beta})$ in our results for this paper.
\begin{lem} \label{lem:2}
	For $\alpha > \beta$, $\alpha >\gamma$, $\lambda, \mu \in \mathbb{R}$ and $\Re(s)>0$, the following result holds true:
	\begin{align*}
	\mathscr{L}^{-1}\Bigl\{\frac{s^{\gamma}}{s^{\alpha}-\mu s^{\beta}-\lambda} \Bigr\}(t)&= t^{\alpha-\gamma-1}\sum_{l=0}^{\infty} \sum_{k=0}^{\infty}\binom{l+k}{k}\frac{\lambda^{l}\mu^{k}t^{l\alpha+k(\alpha-\beta)}}{\Gamma(l\alpha+k(\alpha-\beta)+\alpha-\gamma)} \\
	&=t^{\alpha-\gamma-1}E_{\alpha,\alpha-\beta, \alpha-\gamma}(\lambda t^{\alpha}, \mu t^{\alpha-\beta}).
	\end{align*}
\end{lem}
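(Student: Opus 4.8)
The plan is to reduce everything to the single-variable computation already carried out in Lemma \ref{lem:1}, and then absorb the extra factor $s^{\gamma}$ together with the outer summation in $\lambda$ by an elementary power-series manipulation. First I would isolate the block $s^{\alpha}-\mu s^{\beta}$ in the denominator and expand geometrically in the remaining parameter $\lambda$:
\[
\frac{s^{\gamma}}{s^{\alpha}-\mu s^{\beta}-\lambda}
=\frac{s^{\gamma}}{s^{\alpha}-\mu s^{\beta}}\cdot\frac{1}{1-\dfrac{\lambda}{s^{\alpha}-\mu s^{\beta}}}
=s^{\gamma}\sum_{l=0}^{\infty}\frac{\lambda^{l}}{(s^{\alpha}-\mu s^{\beta})^{l+1}},
\]
which is valid whenever $\left|\lambda/(s^{\alpha}-\mu s^{\beta})\right|<1$.

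Next I would insert the binomial expansion established inside the proof of Lemma \ref{lem:1}, with the coefficient there renamed $\lambda\mapsto\mu$, namely
\[
\frac{1}{(s^{\alpha}-\mu s^{\beta})^{l+1}}
=\sum_{k=0}^{\infty}\binom{l+k}{k}\frac{\mu^{k}}{s^{\,k(\alpha-\beta)+(l+1)\alpha}},
\]
so that after multiplying by $s^{\gamma}$ and collecting exponents the transform becomes a pure double power series in negative powers of $s$:
\[
\frac{s^{\gamma}}{s^{\alpha}-\mu s^{\beta}-\lambda}
=\sum_{l=0}^{\infty}\sum_{k=0}^{\infty}\binom{l+k}{k}\frac{\lambda^{l}\mu^{k}}{s^{\,l\alpha+k(\alpha-\beta)+\alpha-\gamma}}.
\]
I would then invert term by term using $\mathscr{L}^{-1}\{s^{-\rho}\}(t)=t^{\rho-1}/\Gamma(\rho)$ with $\rho=l\alpha+k(\alpha-\beta)+\alpha-\gamma$, where the hypotheses $\alpha>\beta$ and $\alpha>\gamma$ guarantee $\rho>0$ for every pair $(l,k)$. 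Factoring out $t^{\alpha-\gamma-1}$ and comparing with the univariate form \eqref{bivtype} (taken with $\delta=1$ and parameters $\alpha,\alpha-\beta,\alpha-\gamma$) identifies the resulting sum with $t^{\alpha-\gamma-1}E_{\alpha,\alpha-\beta,\alpha-\gamma}(\lambda t^{\alpha},\mu t^{\alpha-\beta})$, which is exactly the claimed expression.

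The main obstacle is rigor rather than algebra: I must justify the two series expansions and the term-by-term inversion. The geometric and binomial expansions only converge for $s$ with large enough real part, specifically $|s^{\alpha-\beta}|>|\mu|$ and $|s^{\alpha}-\mu s^{\beta}|>|\lambda|$, and interchanging the double sum with the inverse-Laplace integral requires a dominated-convergence (or uniform-convergence) argument applied to the absolutely convergent majorant of the double series in that region. Following the same device used at the end of Lemma \ref{lem:1}, I would first establish the identity on this region of large $\Re(s)$ and then remove the auxiliary restrictions by analytic continuation, since the left-hand side is analytic in $s$ on $\Re(s)>0$ while the right-hand side is an entire function of $t$ whose Laplace transform likewise converges there. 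This extends the equality to all $s$ with $\Re(s)>0$ and completes the proof.
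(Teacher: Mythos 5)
Your proposal is correct and follows essentially the same route as the paper's own proof: the same geometric expansion in $\lambda$ of $\frac{s^{\gamma}}{s^{\alpha}-\mu s^{\beta}-\lambda}$, the same binomial expansion of $(s^{\alpha}-\mu s^{\beta})^{-(l+1)}$ taken from Lemma \ref{lem:1}, term-by-term inversion of the negative powers of $s$, identification with the bivariate Mittag-Leffler function, and removal of the auxiliary convergence restrictions $|s^{\alpha-\beta}|>|\mu|$, $|s^{\alpha}-\mu s^{\beta}|>|\lambda|$ by analytic continuation. Your additional remark about justifying the interchange of summation and inversion via dominated convergence only makes explicit a point the paper leaves implicit, and does not change the argument.
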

\begin{proof}
	$\frac{s^{\gamma}}{s^{\alpha}-\mu s^{\beta}-\lambda}$ can be written via a series expansion as follows:
	\begin{equation*}
	\frac{s^{\gamma}}{s^{\alpha}-\mu s^{\beta}-\lambda}=\frac{s^{\gamma}}{s^{\alpha}-\mu s^{\beta}}\frac{1}{1-\frac{\lambda}{s^{\alpha}-\mu s^{\beta}}}=\sum_{l=0}^{\infty}\frac{\lambda^{l}s^{\gamma}}{(s^{\alpha}-\mu s^{\beta})^{l+1}}.
	\end{equation*}
	Then applying Lemma \ref{lem:1} to the last expression, we acquire that
	\begin{align*}
	\frac{s^{\gamma}}{s^{\alpha}-\mu s^{\beta}-\lambda}&=\sum_{l=0}^{\infty}\frac{\lambda^{l}s^{\gamma}}{s^{(l+1)\alpha}}\frac{1}{(1-\frac{\mu}{s^{\alpha-\beta}})^{l+1}}\\
	&=\sum_{l=0}^{\infty}\frac{\lambda^{l}s^{\gamma}}{s^{(l+1)\alpha}}\sum_{k=0}^{\infty}\binom{l+k}{k}\Big(\frac{\mu}{s^{\alpha-\beta}}\Big)^{k}\\
	&=\sum_{l=0}^{\infty}\sum_{k=0}^{\infty}\binom{l+k}{k}\frac{\lambda^{l}\mu^{k}}{s^{(l+1)\alpha+k(\alpha-\beta)-\gamma}}.
	\end{align*}
	Taking inverse Laplace transform of the aforementioned function, we attain:
	\begin{align*}
	\mathscr{L}^{-1}\Bigl\{\frac{s^{\gamma}}{s^{\alpha}-\mu s^{\beta}-\lambda} \Bigr\}(t)&= \sum_{l=0}^{\infty} \sum_{k=0}^{\infty}\binom{l+k}{k}\mu^{l}\lambda^{k}\mathscr{L}^{-1}\Bigl\{\frac{1}{s^{(l+1)\alpha+k(\alpha-\beta)-\gamma}} \Bigr\}(t)\\
	&=t^{\alpha-\gamma-1}\sum_{l=0}^{\infty} \sum_{k=0}^{\infty}\binom{l+k}{k}\frac{\lambda^{l}\mu^{k}t^{l\alpha+k(\alpha-\beta)}}{\Gamma(l\alpha+k(\alpha-\beta)+\alpha-\gamma)}\\
	&=t^{\alpha-\gamma-1}E_{\alpha,\alpha-\beta, \alpha-\gamma}(\lambda t^{\alpha},\mu t^{\alpha-\beta}), 
	\end{align*}
	which is the desired result. We have required extra conditions on $s$ for convergence of the binomial type series in the Laplace domain, namely that
	\begin{align*}
	  &s^{\alpha-\beta}>|\mu|,\\
	  &|s^{\alpha}-\mu s^{\beta}| > |\lambda|.
	\end{align*}
	However, these conditions can be removed at the end, by analytic continuation of both sides of the identity, to give the desired result for all $s \in\mathbb{C}$ satisfying $\mathrm{\Re}(s)>0$. The proof is complete.
\end{proof}

\begin{lem} \label{Lem:biML}
	For any parameters $\alpha,\beta,\gamma,\lambda,\mu\in\mathbb{R}$ satisfying $\alpha,\beta>0$ and $\gamma-1>\lfloor\alpha\rfloor$, we have
	\begin{equation}\label{N1}
	\prescript{C}{0}D^{\alpha}_{t}\Big[t^{\gamma-1}E_{\alpha,\beta,\gamma}(\lambda t^{\alpha},\mu t^{\beta})\Big]=t^{\gamma-\alpha-1}E_{\alpha,\beta,\gamma-\alpha}(\lambda t^{\alpha},\mu t^{\beta}), \quad t>0.
\end{equation}
\end{lem}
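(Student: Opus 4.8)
The plan is to differentiate the double power series defining the univariate bivariate Mittag-Leffler function termwise and then to re-sum the result into the series for $E_{\alpha,\beta,\gamma-\alpha}$. Starting from \eqref{bivtype} with $\delta=1$, the left-hand side of \eqref{N1} reads
\begin{equation*}
t^{\gamma-1}E_{\alpha,\beta,\gamma}(\lambda t^{\alpha},\mu t^{\beta})=\sum_{l=0}^{\infty}\sum_{k=0}^{\infty}\binom{l+k}{k}\frac{\lambda^{l}\mu^{k}}{\Gamma(l\alpha+k\beta+\gamma)}\,t^{l\alpha+k\beta+\gamma-1},
\end{equation*}
so that, granting that $\prescript{C}{0}D^{\alpha}_{t}$ may be passed inside the double sum, the whole computation reduces to applying the operator to the single power $t^{l\alpha+k\beta+\gamma-1}$ for each pair $(l,k)$.

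The engine of the argument is the power rule for the Caputo derivative,
\begin{equation*}
\prescript{C}{0}D^{\alpha}_{t}\bigl[t^{p}\bigr]=\frac{\Gamma(p+1)}{\Gamma(p-\alpha+1)}\,t^{p-\alpha},
\end{equation*}
which follows from Definition \ref{defCaputo} by differentiating $t^{p}$ exactly $n$ times and then computing the Riemann-Liouville integral of order $n-\alpha$ of a power. This identity holds provided the exponent satisfies $p>n-1$, since only then is $\frac{\mathrm{d}^{n}}{\mathrm{d}t^{n}}t^{p}$ integrable against the kernel $(t-s)^{n-\alpha-1}$ near $s=0$. Here is exactly where the hypothesis $\gamma-1>\lfloor\alpha\rfloor$ is used: every exponent appearing in the series is $p=l\alpha+k\beta+\gamma-1\geq\gamma-1>\lfloor\alpha\rfloor\geq n-1$, since $\alpha,\beta>0$ and $l,k\geq 0$, so the power rule applies to every term simultaneously. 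Substituting $p=l\alpha+k\beta+\gamma-1$ makes the factor $\Gamma(l\alpha+k\beta+\gamma)$ cancel against the denominator already present, leaving
\begin{equation*}
\sum_{l=0}^{\infty}\sum_{k=0}^{\infty}\binom{l+k}{k}\frac{\lambda^{l}\mu^{k}}{\Gamma(l\alpha+k\beta+\gamma-\alpha)}\,t^{l\alpha+k\beta+\gamma-\alpha-1}=t^{\gamma-\alpha-1}E_{\alpha,\beta,\gamma-\alpha}(\lambda t^{\alpha},\mu t^{\beta}),
\end{equation*}
where the last equality is just \eqref{bivtype} read with $\gamma$ replaced by $\gamma-\alpha$. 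This delivers the right-hand side of \eqref{N1}.

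The main obstacle, and the only point requiring genuine care, is justifying the termwise action of $\prescript{C}{0}D^{\alpha}_{t}$ on the infinite double series. Since $\prescript{C}{0}D^{\alpha}_{t}=\prescript{}{0}{I}^{n-\alpha}_{t}\circ\frac{\mathrm{d}^{n}}{\mathrm{d}t^{n}}$, I would first interchange $\frac{\mathrm{d}^{n}}{\mathrm{d}t^{n}}$ with the sum, which is legitimate because the bivariate Mittag-Leffler series is an entire function of its arguments (Definition \ref{Def:bML}, following \cite{Fernandez-Kurt-Ozarslan}), so its univariate specialisation converges locally uniformly in $t$ together with all its $t$-derivatives on each compact interval $[0,t]$. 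I would then interchange the integral operator $\prescript{}{0}{I}^{n-\alpha}_{t}$ with the sum by dominated convergence, using the local uniform bound on the $n$-times differentiated series to dominate the partial sums against the integrable kernel $(t-s)^{n-\alpha-1}$. Once these two interchanges are secured, the termwise power-rule computation above is rigorous, and the proof is complete.
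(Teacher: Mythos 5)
Your proposal is correct and follows essentially the same route as the paper: expand $t^{\gamma-1}E_{\alpha,\beta,\gamma}(\lambda t^{\alpha},\mu t^{\beta})$ as the double power series \eqref{bivtype}, apply the Caputo power rule \eqref{Cap} termwise (where the hypothesis $\gamma-1>\lfloor\alpha\rfloor$ guarantees every exponent $l\alpha+k\beta+\gamma-1$ lies in the admissible range), and re-sum to obtain $t^{\gamma-\alpha-1}E_{\alpha,\beta,\gamma-\alpha}(\lambda t^{\alpha},\mu t^{\beta})$. Your additional justification of the term-by-term interchange (locally uniform convergence for $\frac{\mathrm{d}^{n}}{\mathrm{d}t^{n}}$, dominated convergence for $\prescript{}{0}{I}^{n-\alpha}_{t}$) is a point the paper passes over silently, but it does not change the substance of the argument.
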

\begin{proof}
	We have the following formula for Caputo derivatives of power functions \cite{I. Podlubny,Kilbas-Srivastava-Trujillo}:
	\begin{equation} \label{Cap}
	\prescript{C}{0}D^{\nu}_{t}\left(\frac{t^{\eta}}{\Gamma(\eta+1)}\right)= 	\begin{cases}
	\frac{t^{\eta-\nu}}{\Gamma(\eta-\nu+1)},\qquad\eta>\lfloor\nu\rfloor,\\
	\qquad 0, \quad \qquad \eta=0,1,2,\dots,\lfloor\nu\rfloor,\\
	\text{undefined}, \qquad \text{otherwise}.
	\end{cases}
	\end{equation}
	Therefore, the given condition $\gamma-1>\lfloor\alpha\rfloor$, from \eqref{Cap} we can attain
	\begin{align*}
	\prescript{C}{0}D^{\alpha}_{t}\left[ t^{\gamma-1}E_{\alpha,\beta,\gamma}(\lambda t^{\alpha},\mu t^{\beta})\right]  &=\prescript{C}{0}D^{\alpha}_{t}\left[\sum_{l=0}^{\infty}\sum_{k=0}^{\infty}\binom{l+k}{k}\frac{\lambda^l\mu^k t^{l\alpha+k\beta+\gamma-1}}{\Gamma(l\alpha+k\beta+\gamma)}\right]\\
	&=\sum_{l=0}^{\infty}\sum_{k=0}^{\infty}\binom{l+k}{k}\lambda^l\mu^k\prescript{C}{0}D^{\alpha}_{t}\left( \frac{t^{l\alpha+k\beta+\gamma-1}}{\Gamma(l\alpha+k\beta+\gamma)} \right) \\ \nonumber
	&=\sum_{l=0}^{\infty}\sum_{k=0}^{\infty}\binom{l+k}{k}\frac{\lambda^l\mu^kt^{l\alpha+k\beta+\gamma-\alpha-1}}{\Gamma(l\alpha+k\beta+\gamma-\alpha)}\\ \nonumber
	&=t^{\gamma-\alpha-1}E_{\alpha,\beta,\gamma-\alpha}(\lambda t^{\alpha},\mu t^{\beta}), \quad t>0.
	\end{align*}
	The proof is complete.
\end{proof}

\begin{lem} \label{Lem:biML-RL}
	For any parameters $\alpha,\beta,\gamma,\lambda,\mu\in\mathbb{R}$ satisfying $\alpha,\beta,\gamma>0$, we have
	\begin{equation}\label{RL}
	\prescript{RL}{0}D^{\alpha}_{t}\Big[t^{\gamma-1}E_{\alpha,\beta,\gamma}(\lambda t^{\alpha},\mu t^{\beta})\Big]=t^{\gamma-\alpha-1}E_{\alpha,\beta,\gamma-\alpha}(\lambda t^{\alpha},\mu t^{\beta}), \quad t>0.
	\end{equation}
\end{lem}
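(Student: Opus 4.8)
The plan is to mirror the proof of the Caputo version in Lemma~\ref{Lem:biML}, replacing the Caputo power rule by the corresponding Riemann--Liouville one. The crucial ingredient is the Riemann--Liouville fractional derivative of a power function,
\[
\prescript{RL}{0}D^{\alpha}_{t}\left(\frac{t^{\eta}}{\Gamma(\eta+1)}\right)=\frac{t^{\eta-\alpha}}{\Gamma(\eta-\alpha+1)},\qquad \eta>-1,
\]
which, unlike its Caputo counterpart \eqref{Cap}, holds uniformly for \emph{all} $\eta>-1$ with no exceptional vanishing cases. This is precisely why the hypothesis here can be relaxed to $\gamma>0$ (rather than $\gamma-1>\lfloor\alpha\rfloor$): the smallest exponent appearing in the series, obtained at $l=k=0$, equals $\gamma-1$, and $\gamma>0$ already guarantees $\gamma-1>-1$.

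First I would expand the univariate bivariate Mittag--Leffler function using \eqref{bivtype} with $\delta=1$, writing
\[
t^{\gamma-1}E_{\alpha,\beta,\gamma}(\lambda t^{\alpha},\mu t^{\beta})=\sum_{l=0}^{\infty}\sum_{k=0}^{\infty}\binom{l+k}{k}\frac{\lambda^{l}\mu^{k}}{\Gamma(l\alpha+k\beta+\gamma)}\,t^{l\alpha+k\beta+\gamma-1}.
\]
Next I would apply $\prescript{RL}{0}D^{\alpha}_{t}$ term by term. For each summand the exponent is $\eta=l\alpha+k\beta+\gamma-1$, and since $\alpha,\beta,\gamma>0$ and $l,k\ge 0$ we have $\eta\ge\gamma-1>-1$, so the power rule above applies to every term and yields
\[
\prescript{RL}{0}D^{\alpha}_{t}\left(\frac{t^{l\alpha+k\beta+\gamma-1}}{\Gamma(l\alpha+k\beta+\gamma)}\right)=\frac{t^{l\alpha+k\beta+\gamma-\alpha-1}}{\Gamma(l\alpha+k\beta+\gamma-\alpha)}.
\]
Finally I would reassemble the double series and recognise it, via \eqref{bivtype} with $\gamma$ replaced by $\gamma-\alpha$, as $t^{\gamma-\alpha-1}E_{\alpha,\beta,\gamma-\alpha}(\lambda t^{\alpha},\mu t^{\beta})$, which is the claimed identity.

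The main obstacle is the rigorous justification of the term-by-term differentiation, that is, interchanging the Riemann--Liouville operator (a classical $n$-fold derivative composed with a fractional integral) with the infinite double summation. I expect this to follow from the locally uniform convergence of the defining series of the bivariate Mittag--Leffler function on compact subsets of $(0,\infty)$ for $\alpha,\beta>0$, together with the fact that differentiating the series term by term again produces a locally uniformly convergent series of the same type (now with parameter $\gamma-\alpha$). This legitimises passing the fractional integral through the sum by dominated convergence and the subsequent ordinary derivative through the sum by uniform convergence of the differentiated series; once these interchanges are established, the remaining computation is the routine bookkeeping carried out above.
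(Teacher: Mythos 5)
Your proof is correct and takes essentially the same approach as the paper: the paper likewise replaces the Caputo power rule \eqref{Cap} by the Riemann--Liouville power rule \eqref{powerRL}, valid for all exponents $\eta>-1$, and then repeats the termwise computation of Lemma \ref{Lem:biML} verbatim, which is why the hypothesis relaxes to $\gamma>0$. Your discussion of justifying the term-by-term differentiation is in fact more careful than the paper's proof, which omits that point entirely.
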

\begin{proof}
	We have the following formula for Riemann-Liouville derivatives of power functions \cite{I. Podlubny,Kilbas-Srivastava-Trujillo}:
	\begin{equation} \label{powerRL}
	\prescript{RL}{0}D^{\nu}_{t}\left(\frac{t^{\eta}}{\Gamma(\eta+1)}\right)= 	
	\frac{t^{\eta-\nu}}{\Gamma(\eta-\nu+1)},\quad \nu, \eta \in \mathbb{R}, \quad \eta>-1.
	\end{equation}
	Therefore, given the condition $\gamma>0$, in accordance with \eqref{powerRL} we will get the same result with \eqref{N1}.
	The proof is complete.
\end{proof}

\begin{defn}
	Let $\lambda_{i},\mu_{j} \in \mathbb{R}$, $\alpha_{i},\beta_{j} \in \mathbb{R}$, $i=1,2,\ldots,p$, $j=1,2,\ldots,q$. Generalized Wright function or Fox-Wright function $\prescript{}{p}{\Psi_{q}}(\cdot):\mathbb{R}\to \mathbb{R}$ is defined by
	\begin{equation}\label{fox}
	\prescript{}{p}{\Psi_{q}}(t)=\prescript{}{p}{\Psi_{q}}\left[\begin{array}{ccc}
	(\lambda_{i},\alpha_i)_{1,p} \\
	(\mu_j,\beta_j)_{1,q}
	\end{array}\Big|t\right]=\sum_{k=0}^{\infty}\frac{\prod\limits_{i=1}^{p}\Gamma(\lambda_{i}+\alpha_{i}k)}{\prod\limits_{j=1}^{q}\Gamma(\mu_{j}+\beta_{j}k)}\frac{t^{k}}{k!}.
	\end{equation}
\end{defn} 

The Fox-Wright function was established by Fox \cite{Fox} and Wright \cite{Wright}. If the following condition holds
\begin{equation*}
\sum_{j=1}^{q}\beta_j-\sum_{i=1}^{p}\alpha_{i}>-1,
\end{equation*}
then the series in \eqref{fox} is convergent for arbitrary $t\in \mathbb{R}$.

\section{Leibniz integral rule}\label{generalized Leibniz rule}

In this section, we formulate  Leibniz integral rule for higher order derivatives on Lebesgue integration. It is known that according to the suitable conditions, we can differentiate under the integral sign for Lebesgue integrals \cite{Chen}. We begin with the first derivative of a Lebesgue integral on $X\subseteq\mathbb{R}$.

\begin{thm}[\label{Lebesgue}\cite{Chen}]
	Assume that $X,Y \subseteq \mathbb{R}$ are intervals. Suppose also that the function $f : X \times Y\to\mathbb{R}$ satisfies
	the following assumptions:
	
	(a) For every fixed $y\in Y$, the function $f(\cdot, y)$ is measurable on $X$;
	
	(b) The partial derivative $\frac{\partial}{\partial y}f(x, y)$ exists for every interior point $(x, y) \in X \times Y$;
	
	(c) There exists a non-negative integrable function $g$ such that  $\left|\frac{\partial}{\partial y}f(x, y)\right| \leq g(x)$ exists for every interior point $(x, y) \in X \times Y$;
	
	(d) There exists $y_{0}\in Y$ such that $f(x, y_{0})$ is integrable on $X$.
	
	Then for every $y \in Y$, the Lebesgue integral
	\begin{equation*}
	\int\limits_{X}f(x, y)\mathrm{d}x
	\end{equation*}
	exists. Furthermore, the function $F : Y \to \mathbb{R}$, defined by
	\begin{equation*}
	F(y)=\int\limits_{X}f(x, y)\mathrm{d}x
	\end{equation*}
	for every $y \in Y,$ is differentiable at every interior point of $ Y$, and the derivative of $F(y)$ satisfies
	\begin{equation*}
	F^{\prime}(y)=\int\limits_{X}\frac{\partial}{\partial y}f(x, y)\mathrm{d}x.
	\end{equation*}
\end{thm}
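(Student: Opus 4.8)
The plan is to reduce everything to the Dominated Convergence Theorem, using the classical Mean Value Theorem to manufacture an integrable majorant. First I would establish that $f(\cdot,y)$ is integrable on $X$ for \emph{every} $y\in Y$, not merely for the single point $y_{0}$ supplied by (d). Fixing $x$ and applying the Mean Value Theorem to the map $y\mapsto f(x,y)$, which is differentiable by assumption (b), there is a point $\xi$ strictly between $y$ and $y_{0}$ with $f(x,y)-f(x,y_{0})=\frac{\partial}{\partial y}f(x,\xi)\,(y-y_{0})$. Combining this with the majorization in (c) yields the pointwise bound
\begin{equation*}
|f(x,y)|\leq |f(x,y_{0})|+|y-y_{0}|\,g(x),
\end{equation*}
whose right-hand side is integrable by (d) and (c). Hence $F(y)=\int_{X}f(x,y)\,\mathrm{d}x$ is well-defined for every $y\in Y$.

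Next I would fix an interior point $y\in Y$ and study the difference quotient directly. For any nonzero increment $h$ with $y+h\in Y$, set
\begin{equation*}
\phi_{h}(x)=\frac{f(x,y+h)-f(x,y)}{h}.
\end{equation*}
The Mean Value Theorem again supplies, for each $x$, a point $\xi_{x,h}$ between $y$ and $y+h$ with $\phi_{h}(x)=\frac{\partial}{\partial y}f(x,\xi_{x,h})$, so that $|\phi_{h}(x)|\leq g(x)$ uniformly in $h$ by (c). By the very definition of the partial derivative in (b), $\phi_{h}(x)\to\frac{\partial}{\partial y}f(x,y)$ as $h\to0$, for each fixed $x$. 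Since each $\phi_{h}$ is measurable by (a), its pointwise limit $\frac{\partial}{\partial y}f(\cdot,y)$ is measurable, so the candidate right-hand integral is meaningful; thus the integrand of $\frac{F(y+h)-F(y)}{h}=\int_{X}\phi_{h}(x)\,\mathrm{d}x$ converges pointwise and is dominated by the fixed integrable function $g$.

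The final step is to pass the limit inside the integral. Because the Dominated Convergence Theorem is formulated for sequences, I would invoke the sequential characterization of the limit: to prove $\lim_{h\to0}\int_{X}\phi_{h}(x)\,\mathrm{d}x=\int_{X}\frac{\partial}{\partial y}f(x,y)\,\mathrm{d}x$, it suffices to verify convergence along \emph{every} sequence $h_{n}\to0$ with $h_{n}\neq0$ and $y+h_{n}\in Y$. For each such sequence the functions $\phi_{h_{n}}$ satisfy the hypotheses of the Dominated Convergence Theorem (pointwise convergence together with the single majorant $g$), so the limit passes through the integral to the same value independent of the sequence. This yields simultaneously the existence of $F'(y)$ and the identity $F'(y)=\int_{X}\frac{\partial}{\partial y}f(x,y)\,\mathrm{d}x$. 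I expect the main obstacle to be the bookkeeping at this last stage: one must justify interchanging a \emph{continuous} limit $h\to0$ with the integral using a theorem stated only for sequences, and use the interiority of $y$ to guarantee $y+h\in Y$ for all sufficiently small $h$, so that the difference quotient is defined throughout a punctured neighbourhood of $0$.
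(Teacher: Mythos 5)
This theorem is stated in the paper without any proof: it is imported verbatim from the cited reference \cite{Chen}, so there is no in-paper argument to compare yours against. Your proposal is correct and is precisely the classical argument given in that source --- the Mean Value Theorem furnishes both the integrable majorant $|f(x,y_{0})|+|y-y_{0}|\,g(x)$ (giving existence of $F(y)$) and the uniform bound $|\phi_{h}(x)|\leq g(x)$ on the difference quotients, after which the Dominated Convergence Theorem, applied along arbitrary sequences $h_{n}\to 0$ exactly as you indicate, yields simultaneously the existence of $F'(y)$ at interior points and the identity $F'(y)=\int_{X}\frac{\partial}{\partial y}f(x,y)\,\mathrm{d}x$.
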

The well-known rule for the differentiation of an integral depending on
a parameter with the upper limit also depends on the same parameter,
namely:

\begin{coroll}[\cite{Fikhtengoltz}]
	If $X=(y_{0},y)$ and assumptions of Theorem \ref{Lebesgue} are fulfilled, then the following relation holds true for all $y \in Y$:
	\begin{equation}
	\frac{d}{dy}\int\limits_{y_{0}}^{y}f(x,y)\mathrm{d}x=\int\limits_{y_{0}}^{y}\frac{\partial}{\partial y} f(x,y)\mathrm{d}x + \lim\limits_{x \to y-0}f(x,y),\quad y \in X.
	\end{equation}
\end{coroll}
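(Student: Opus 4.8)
The plan is to reduce the corollary to Theorem \ref{Lebesgue} by separating the two roles played by the parameter $y$ in $F(y) = \int_{y_0}^{y} f(x,y)\,dx$: it appears both inside the integrand and as the upper limit of integration. I would work directly with the difference quotient. Adding and subtracting $\int_{y_0}^{y+h} f(x,y)\,dx$, the increment splits as
\begin{equation*}
\frac{F(y+h)-F(y)}{h} = \int_{y_0}^{y+h}\frac{f(x,y+h)-f(x,y)}{h}\,dx + \frac{1}{h}\int_{y}^{y+h} f(x,y)\,dx,
\end{equation*}
which cleanly isolates the integrand contribution (first term) from the moving-endpoint contribution (second term). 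Each limit as $h \to 0$ can then be treated in turn.

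For the second term, I would observe that $\frac{1}{h}\int_{y}^{y+h} f(x,y)\,dx$ is the average of $f(\cdot,y)$ over a shrinking interval abutting the upper limit, so it converges to the one-sided boundary value $\lim_{x \to y-0} f(x,y)$; this is exactly the source of the boundary term in the conclusion, and it is kept as a one-sided limit because the integration variable approaches $y$ from below (and, in the intended fractional applications, $f(x,y)$ carries a kernel that is singular or only defined for $x < y$). For the first term, I would apply the mean value theorem in the variable $y$ to write $\frac{f(x,y+h)-f(x,y)}{h} = \frac{\partial}{\partial y}f(x,\xi)$ for some $\xi$ between $y$ and $y+h$; hypothesis (c) of Theorem \ref{Lebesgue} then gives the uniform, $h$-independent bound $\bigl|\frac{\partial}{\partial y}f(x,\xi)\bigr| \le g(x)$ with $g$ integrable, so the dominated convergence theorem permits passing to the limit under the integral sign and yields $\int_{y_0}^{y}\frac{\partial}{\partial y}f(x,y)\,dx$. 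Summing the two limits gives the asserted identity.

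The step I expect to be the main obstacle is the rigorous limit interchange in the first term, complicated by the fact that its domain $(y_0,y+h)$ itself varies with $h$. I would dispose of this by splitting $\int_{y_0}^{y+h} = \int_{y_0}^{y} + \int_{y}^{y+h}$: on the fixed interval $(y_0,y)$ the dominated convergence argument applies verbatim, while the leftover piece is bounded in absolute value by $\int_{y}^{y+h} g(x)\,dx$, which tends to $0$ since $g$ is integrable and the interval length shrinks to zero. The existence and integrability needed to make every expression meaningful are precisely the conclusions of Theorem \ref{Lebesgue}. Equivalently, one can introduce $\Phi(u,v) = \int_{y_0}^{u} f(x,v)\,dx$, compute $\partial_u\Phi = f(u,v)$ by the fundamental theorem of calculus and $\partial_v\Phi = \int_{y_0}^{u}\frac{\partial}{\partial v}f(x,v)\,dx$ by Theorem \ref{Lebesgue}, and differentiate $F(y) = \Phi(y,y)$ by the chain rule; this bookkeeping is identical but transfers the analytic burden onto verifying the total differentiability of $\Phi$.
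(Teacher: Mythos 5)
The paper offers no proof of this corollary at all: it is quoted from Fikhtengoltz's textbook \cite{Fikhtengoltz} and used as a known fact, so your argument fills a gap rather than paralleling anything in the text. Your route --- splitting the difference quotient as
$\frac{F(y+h)-F(y)}{h} = \int_{y_0}^{y+h}\frac{f(x,y+h)-f(x,y)}{h}\,\mathrm{d}x + \frac{1}{h}\int_{y}^{y+h} f(x,y)\,\mathrm{d}x$,
treating the first term by the mean value theorem, the domination $\left|\frac{\partial}{\partial y}f(x,\xi)\right|\leq g(x)$ from hypothesis (c), and dominated convergence, handling the moving domain by an error of at most $\int_{y}^{y+h} g(x)\,\mathrm{d}x \to 0$, and recognizing the second term as a shrinking average --- is the standard textbook proof of this rule, and it correctly invokes the hypotheses of Theorem \ref{Lebesgue} exactly where they are needed.

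One step, however, is stated imprecisely and should be repaired. For $h>0$ the endpoint term $\frac{1}{h}\int_{y}^{y+h} f(x,y)\,\mathrm{d}x$ samples $f(x,y)$ at points $x>y$; its limit, when it exists, is the right-hand limit $\lim_{x\to y+0}f(x,y)$, not the left-hand one, so your justification (``the integration variable approaches $y$ from below'') is valid only for $h<0$. Worse, in the intended applications $f(x,y)$ is a kernel such as $(y-x)^{\alpha-1}$ that is singular or undefined for $x\geq y$, so for $h>0$ neither the endpoint term nor the leftover piece $\int_{y}^{y+h}$ of your first term is even well-defined. To make the proof match the statement you should either (i) interpret $\frac{d}{dy}$ as a left-hand derivative and run your argument verbatim with $h\to 0-$, which is what the corollary and all of the paper's subsequent uses of it (e.g.\ formula \eqref{first der} and the fractional rules built on it) implicitly do, or (ii) add the hypothesis that $f(\cdot,y)$ extends continuously across $x=y$, so that both one-sided averages converge to the common value. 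Note also that the existence of $\lim_{x\to y-0}f(x,y)$ is not a consequence of hypotheses (a)--(d) of Theorem \ref{Lebesgue}; like the corollary itself, your proof must assume it. With the one-sided interpretation made explicit, your argument is complete and rigorous.
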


So, the formula of differentiation under the integral sign for $K(t,s)$ with respect to $t$ is
\begin{equation}\label{first der}
\frac{d}{dt}\int\limits_{t_{0}}^{t}K(t,s)\mathrm{d}s=\int\limits_{t_{0}}^{t}\frac{\partial}{\partial t} K(t,s)\mathrm{d}s + \lim\limits_{s \to t-0}K(t,s), \quad t \in \mathbb{\hat{J}}.
\end{equation}

Using the formula \eqref{first der}, we define the second-order derivative of the integral depending on $t$:
\allowdisplaybreaks
\begin{align*}
\frac{d^{2}}{dt^{2}}\int\limits_{t_{0}}^{t}K(t,s)\mathrm{d}s&=
\frac{d}{dt}\left(\frac{d}{dt} \int\limits_{t_{0}}^{t}K(t,s)\mathrm{d}s\right)=
\frac{d}{dt}\left(\lim\limits_{s\to t-0}K(t,s)+\int\limits_{t_{0}}^{t}\frac{\partial}{\partial t}K(t,s)\mathrm{d}s\right) \\
&=\frac{d}{dt}\lim\limits_{s\to t-0}K(t,s)+\frac{d}{dt}\int\limits_{t_{0}}^{t}\frac{\partial}{\partial t}K(t,s)\mathrm{d}s  \\
&=\frac{d}{dt}\lim\limits_{s\to t-0}K(t,s)+\lim\limits_{s\to t-0}\frac{\partial}{\partial t}K(t,s)+\int\limits_{t_{0}}^{t}\frac{\partial^2}{\partial t^2}K(t,s)\mathrm{d}s\\ &=\sum_{l=1}^{2}\frac{d^{l-1}}{dt^{l-1}}\lim\limits_{s\to t-{0}}\frac{\partial^{2-l}}{\partial t^{2-l}}K(t,s)+\int\limits_{t_{0}}^{t}\frac{\partial^{2}}{\partial t^{2}}K(t,s)\mathrm{d}s, \quad t \in \mathbb{\hat{J}}.
\end{align*}
Then, the third-order differentiation of the integral will be:
\allowdisplaybreaks	
\begin{align*}
&\frac{d^{3}}{dt^{3}}\int\limits_{t_{0}}^{t}K(t,s)\mathrm{d}s=
\frac{d}{dt}\left(\frac{d^2}{dt^2} \int\limits_{t_{0}}^{t}K(t,s)\mathrm{d}s\right)\\
=&\frac{d}{dt}\left(\frac{d}{dt}\lim\limits_{s\to t-0}K(t,s)+\lim\limits_{s\to t-0}\frac{\partial}{\partial t}K(t,s)+\int\limits_{t_{0}}^{t}\frac{\partial^2}{\partial t^2}K(t,s)\mathrm{d}s\right) \\
=&\frac{d^{2}}{dt^{2}}\lim\limits_{s\to t-0}K(t,s)+\frac{d}{dt}\lim\limits_{s\to t-0}\frac{\partial}{\partial t}K(t,s)+\frac{d}{dt}\int\limits_{t_{0}}^{t}\frac{\partial^2}{\partial t^2}K(t,s)\mathrm{d}s\\
=&\frac{d^{2}}{dt^{2}}\lim\limits_{s\to t-0}K(t,s)+\frac{d}{dt}\lim\limits_{s\to t-0}\frac{\partial}{\partial t}K(t,s)\\
+&\lim\limits_{s\to t-0}\frac{\partial^2}{\partial t^2}K(t,s)+\int\limits_{t_{0}}^{t}\frac{\partial^{3}}{\partial t^{3}}K(t,s)\mathrm{d}s\\
=&\sum_{l=1}^{3}\frac{d^{l-1}}{dt^{l-1}}\lim\limits_{s\to t-{0}}\frac{\partial^{3-l}}{\partial t^{3-l}}K(t,s)+\int\limits_{t_{0}}^{t}\frac{\partial^{3}}{\partial t^{3}}K(t,s)\mathrm{d}s, \quad t \in \mathbb{\hat{J}}.
\end{align*}
 Thus, we establish $n$th derivative of the integral for $n \in \mathbb{N}$ which depends on $t$ by recursively as follows:

\begin{align*}
&\frac{d^{n}}{dt^{n}}\int\limits_{t_{0}}^{t}K(t,s)\mathrm{d}s
=\frac{d}{dt}\left( \frac{d^{n-1}}{dt^{n-1}}\int\limits_{t_{0}}^{t}K(t,s)\mathrm{d}s\right)\\
=&\frac{d^{n-1}}{dt^{n-1}}\lim\limits_{s\to t-0}K(t,s)+\frac{d^{n-2}}{dt^{n-2}}\lim\limits_{s\to t-0}\frac{\partial}{\partial t}K(t,s)+\cdots + \frac{d}{dt}\lim\limits_{s\to t-0}\frac{\partial^{n-2}}{\partial t^{n-2}}K(t,s)\\
+&\lim\limits_{s\to t-0}\frac{\partial^{n-1}}{\partial t^{n-1}}K(t,s)+\int\limits_{t_{0}}^{t}\frac{\partial^{n}}{\partial t^{n}}K(t,s)\mathrm{d}s\\
=&\sum_{l=1}^{n}\frac{d^{l-1}}{dt^{l-1}}\lim\limits_{s\to t-0}\frac{\partial^{n-l}}{\partial t^{n-l}}K(t,s)+\int\limits_{t_{0}}^{t}\frac{\partial^{n}}{\partial t^{n}}K(t,s)\mathrm{d}s, \quad t \in \mathbb{\hat{J}}.
\end{align*}
In the next theorem, we state and prove the  Leibniz rule for higher order derivatives.

\begin{thm}\label{K(t,s)}
Let the function $K:\mathbb{J}\times\mathbb{J}\to\mathbb{R}$ be such that the following assumptions are fulfilled:

	(a) For every fixed $t\in \mathbb{J}$, the function $\frac{\partial^{n-1}}{\partial t^{n-1}}K(t,s)$ is measurable on $\mathbb{J}$ and integrable on $\mathbb{J}$ with respect to for some $t^{*}\in \mathbb{J}$;
	
	(b) The partial derivative $\frac{\partial^{n}}{\partial t^{n}}K(t,s)$ exists for every interior point $(t, s) \in \mathbb{\hat{J}} \times \mathbb{\hat{J}}$;
	
	(c) There exists a non-negative integrable function $g$ such that  $\left|\frac{\partial^{n}}{\partial t^{n}}K(t,s)\right| \leq g(s)$ for every interior point $(t, s) \in \mathbb{\hat{J}}\times \mathbb{\hat{J}}$;
	
	(d) The derivative $\frac{d^{l-1}}{dt^{l-1}}\lim\limits_{s\to t-{0}}\frac{\partial^{n-l}}{\partial t^{n-l}}K(t,s)$, $l=1,2,\ldots,n$ exists for every interior point $(t, s) \in \mathbb{\hat{J}} \times \mathbb{\hat{J}}$.
	
Then, the following relation holds true for $n$th derivative under Lebesgue integration for $n \in \mathbb{N}$:
	
	\begin{equation}\label{eq1}
	\frac{d^{n}}{dt^{n}}\int\limits_{t_{0}}^{t}K(t,s)\mathrm{d}s= \sum_{l=1}^{n}\frac{d^{l-1}}{dt^{l-1}}\lim\limits_{s\to t-{0}}\frac{\partial^{n-l}}{\partial t^{n-l}}K(t,s)+\int\limits_{t_{0}}^{t}\frac{\partial^{n}}{\partial t^{n}}K(t,s)\mathrm{d}s, \quad t \in \mathbb{\hat{J}}.
	\end{equation}
\end{thm}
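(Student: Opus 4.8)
The plan is to prove the identity \eqref{eq1} by induction on $n\in\mathbb{N}$, because the telescoping structure of the finite boundary sum is exactly what one additional differentiation reproduces; this simply formalises the recursive pattern already exhibited by the $n=2$ and $n=3$ computations preceding the statement. For the base case $n=1$, formula \eqref{eq1} collapses to \eqref{first der}, i.e. the Corollary to Theorem \ref{Lebesgue}, since the lone summand at $l=1$ is $\frac{d^{0}}{dt^{0}}\lim\limits_{s\to t-0}\frac{\partial^{0}}{\partial t^{0}}K(t,s)=\lim\limits_{s\to t-0}K(t,s)$. Here assumptions (a)--(c) of Theorem \ref{K(t,s)}, read with $n=1$, are precisely the hypotheses (a)--(d) of Theorem \ref{Lebesgue} applied to $K$ itself, so the base case is immediate.

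For the inductive step I would assume \eqref{eq1} holds with $n$ replaced by $n-1$ and then apply $\frac{d}{dt}$ to both sides of the hypothesis. Differentiating the finite sum termwise yields $\sum_{l=1}^{n-1}\frac{d^{l}}{dt^{l}}\lim\limits_{s\to t-0}\frac{\partial^{\,n-1-l}}{\partial t^{\,n-1-l}}K(t,s)$, where assumption (d) is exactly what guarantees each of these differentiated boundary terms exists. For the remaining piece $\frac{d}{dt}\int_{t_{0}}^{t}\frac{\partial^{\,n-1}}{\partial t^{\,n-1}}K(t,s)\,\mathrm{d}s$, I would invoke the base-case rule \eqref{first der} with the kernel $\widetilde{K}(t,s):=\frac{\partial^{\,n-1}}{\partial t^{\,n-1}}K(t,s)$, obtaining $\int_{t_{0}}^{t}\frac{\partial^{\,n}}{\partial t^{\,n}}K(t,s)\,\mathrm{d}s+\lim\limits_{s\to t-0}\frac{\partial^{\,n-1}}{\partial t^{\,n-1}}K(t,s)$. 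This application is the crux, and it is licensed precisely by (a)--(c): measurability of $\widetilde{K}$ in $s$, existence of its $t$-derivative $\frac{\partial^{\,n}}{\partial t^{\,n}}K$, and the integrable dominating function $g$.

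To finish, I would reindex the differentiated sum by $l\mapsto l+1$, so that $\sum_{l=1}^{n-1}\frac{d^{l}}{dt^{l}}\lim\limits_{s\to t-0}\frac{\partial^{\,n-1-l}}{\partial t^{\,n-1-l}}K$ becomes $\sum_{l=2}^{n}\frac{d^{l-1}}{dt^{l-1}}\lim\limits_{s\to t-0}\frac{\partial^{\,n-l}}{\partial t^{\,n-l}}K$, while the leftover boundary contribution $\lim\limits_{s\to t-0}\frac{\partial^{\,n-1}}{\partial t^{\,n-1}}K$ is exactly the missing $l=1$ summand $\frac{d^{0}}{dt^{0}}\lim\limits_{s\to t-0}\frac{\partial^{\,n-1}}{\partial t^{\,n-1}}K$. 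Combining these recovers the full range $\sum_{l=1}^{n}$ together with the integral of $\frac{\partial^{\,n}}{\partial t^{\,n}}K$, which is the right-hand side of \eqref{eq1} for $n$, closing the induction.

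I expect the main obstacle to be bookkeeping rather than analysis: the algebraic reindexing is routine, but one must carefully confirm at each inductive step that the base-case Leibniz rule genuinely applies to the \emph{higher-order} kernel $\frac{\partial^{\,n-1}}{\partial t^{\,n-1}}K$ and that assumption (d) really does justify the termwise differentiation of the boundary sum. Both are secured by hypotheses (a)--(d), which are stated exactly so that the single-step rule \eqref{first der} can be reused verbatim inside the recursion.
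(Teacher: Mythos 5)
Your proposal is correct and follows essentially the same route as the paper: induction on $n$, with the base case given by the one-parameter Leibniz rule \eqref{first der}, the inductive step carried out by differentiating the induction hypothesis, applying the base-case rule to the kernel $\frac{\partial^{n-1}}{\partial t^{n-1}}K(t,s)$, and reindexing the boundary sum so the leftover limit term becomes the $l=1$ summand. The paper's proof phrases the step as $n=k$ to $n=k+1$ rather than $n-1$ to $n$, but the argument is identical.
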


\begin{proof}
	Using mathematical induction principle, we prove above theorem.
	It is obvious that the equation \eqref{eq1} is true for $n=1$ \cite{I. Podlubny}:
	\begin{equation*}
	\frac{d}{dt}\int\limits_{t_{0}}^{t}K(t,s)\mathrm{d}s=\lim\limits_{s\to t-0}K(t,s)+\int\limits_{t_{0}}^{t}\frac{\partial}{\partial t}K(t,s)\mathrm{d}s.
	\end{equation*}
	We assume that \eqref{eq1} holds true for $n=k$:
	\begin{equation*}
	\frac{d^{k}}{dt^{k}}\int\limits_{t_{0}}^{t}K(t,s)\mathrm{d}s= \sum_{l=1}^{k}\frac{d^{l-1}}{dt^{l-1}}\lim\limits_{s\to t-0}\frac{\partial^{k-l}}{\partial t^{k-l}}K(t,s)+\int\limits_{t_{0}}^{t}\frac{\partial^{k}}{\partial t^{k}}K(t,s)\mathrm{d}s, \quad t \in \mathbb{\hat{J}}.
	\end{equation*}
	We prove that \eqref{eq1} is true for $n=k+1$:
	\allowdisplaybreaks
	\begin{align*}
	\frac{d^{k+1}}{dt^{k+1}}\int\limits_{t_{0}}^{t}K(t,s)\mathrm{d}s&=\frac{d}{dt}\left( \frac{d^{k}}{dt^{k}}\int\limits_{t_{0}}^{t}K(t,s)\mathrm{d}s\right) \\ &=\frac{d}{dt}\left( \sum_{l=1}^{k}\frac{d^{l-1}}{dt^{l-1}}\lim\limits_{s\to t-0}\frac{\partial^{k-l}}{\partial t^{k-l}}K(t,s)+\int\limits_{t_{0}}^{t}\frac{\partial^{k}}{\partial t^{k}}K(t,s)\mathrm{d}s\right)\\
	&=\frac{d}{dt}\Big(\lim\limits_{s\to t-0}\frac{\partial^{k-1}}{\partial t^{k-1}}K(t,s)+ \frac{d}{dt}\lim\limits_{s\to t-0}\frac{\partial^{k-2}}{\partial t^{k-2}}K(t,s)+\cdots +\frac{d^{k-1}}{dt^{k-1}}\lim\limits_{s\to t-0} K(t,s)\Big)\\
	&+\frac{d}{dt}\int\limits_{t_{0}}^{t}\frac{\partial^{k}}{\partial t^{k}}K(t,s)\mathrm{d}s\\
	&=\frac{d}{dt}\lim\limits_{s\to t-0}\frac{\partial^{k-1}}{\partial t^{k-1}}K(t,s)+ \frac{d^{2}}{dt^{2}}\lim\limits_{s\to t-0}\frac{\partial^{k-2}}{\partial t^{k-2}}K(t,s)+\cdots +\frac{d^{k}}{dt^{k}}\lim\limits_{s\to t-0} K(t,s)\\
	&+\lim\limits_{s\to t-0}\frac{\partial^{k}}{\partial t^{k}}K(t,s)+\int\limits_{t_{0}}^{t}\frac{\partial^{k+1}}{\partial t^{k+1}}K(t,s)\mathrm{d}s\\
	&=\sum_{l=2}^{k+1}\frac{d^{l-1}}{dt^{l-1}}\lim\limits_{s \to t-0}\frac{\partial^{k-l+1}}{\partial t^{k-l+1}}K(t,s)+\lim\limits_{s\to t-0}\frac{\partial^{k}}{\partial t^{k}}K(t,s)+\int\limits_{t_{0}}^{t}\frac{\partial^{k+1}}{\partial t^{k+1}}K(t,s)\mathrm{d}s\\
	&=\sum_{l=1}^{k+1}\frac{d^{l-1}}{dt^{l-1}}\lim\limits_{s \to t-0}\frac{\partial^{k-l+1}}{\partial t^{k-l+1}}K(t,s)+\int\limits_{t_{0}}^{t}\frac{\partial^{k+1}}{\partial t^{k+1}}K(t,s)\mathrm{d}s, \quad t\in \mathbb{\hat{J}}.
	\end{align*}
	Therefore, the formula \eqref{eq1} holds true for all $n \in \mathbb{N}$ and $t\in \mathbb{\hat{J}}$.
\end{proof}
The following important particular case must be defined for convolution operator of the functions $f$ and $g$.
\begin{coroll}If $K(t,s)=f(t-s)g(s)$ and $t_{0}=0$, and assumptions of Theorem \ref{K(t,s)} are satisfied, then the following relation is true for any $n\in \mathbb{N}$:
	\begin{align}\label{classconvol}
	\frac{d^{n}}{dt^{n}}\int\limits_{0}^{t}f(t-s)g(s)\mathrm{d}s &=\sum_{l=1}^{n}\lim\limits_{s \to t-0}\frac{\partial^{n-l}}{\partial t^{n-l}}f(t-s)\frac{d^{l-1}}{dt^{l-1}}\lim\limits_{s \to t-0}g(s)\nonumber\\
	&+\int\limits_{0}^{t}\frac{\partial^{n}}{\partial t^{n}}f(t-s)g(s)\mathrm{d}s, \quad t >0.
	\end{align}
\end{coroll}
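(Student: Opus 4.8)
The plan is to obtain \eqref{classconvol} as a direct specialization of Theorem \ref{K(t,s)}, the only work being to rewrite the boundary terms once the kernel is taken in convolution form. So I would set $K(t,s)=f(t-s)g(s)$ with $t_{0}=0$ and feed this into the master identity \eqref{eq1}. The decisive structural fact is that the variable $t$ enters $K$ only through the first argument of $f$: since $g(s)$ is independent of $t$ and $\frac{\partial}{\partial t}(t-s)=1$, the chain rule gives
\begin{equation*}
\frac{\partial^{m}}{\partial t^{m}}K(t,s)=f^{(m)}(t-s)\,g(s),\qquad m=0,1,\dots,n.
\end{equation*}
This turns the integral term of \eqref{eq1} immediately into $\int_{0}^{t}f^{(n)}(t-s)g(s)\,\mathrm{d}s$, which is exactly the last term of \eqref{classconvol}.

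For the boundary sum I would treat the $l$-th summand $\frac{d^{l-1}}{dt^{l-1}}\lim_{s\to t-0}\frac{\partial^{n-l}}{\partial t^{n-l}}K(t,s)$. Using the displayed formula with $m=n-l$, the inner limit is $\lim_{s\to t-0}f^{(n-l)}(t-s)g(s)$. Because both factors are continuous up to the diagonal (guaranteed by hypothesis (d) of Theorem \ref{K(t,s)}, which asserts that precisely these limits exist), the limit of the product factorizes as
\begin{equation*}
\lim_{s\to t-0}\Big[f^{(n-l)}(t-s)g(s)\Big]=\Big(\lim_{s\to t-0}f^{(n-l)}(t-s)\Big)\Big(\lim_{s\to t-0}g(s)\Big)=f^{(n-l)}(0)\,g(t).
\end{equation*}
Since $f^{(n-l)}(0)$ is a constant, it passes outside $\frac{d^{l-1}}{dt^{l-1}}$, so each summand becomes $f^{(n-l)}(0)\,g^{(l-1)}(t)$, which is exactly $\big(\lim_{s\to t-0}\frac{\partial^{n-l}}{\partial t^{n-l}}f(t-s)\big)\big(\frac{d^{l-1}}{dt^{l-1}}\lim_{s\to t-0}g(s)\big)$ as written in \eqref{classconvol}. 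Summing over $l=1,\dots,n$ and restoring the integral term completes the identification.

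The only point requiring genuine care — and hence the main obstacle — is the factorization of the diagonal limit together with the commutation of the constant factor $f^{(n-l)}(0)$ with the outer $t$-differentiation; both are routine provided the one-sided limits $\lim_{s\to t-0}f^{(n-l)}(t-s)$ and $\lim_{s\to t-0}g(s)=g(t)$ exist and the derivatives $g^{(l-1)}$ exist, which is precisely what the inherited hypotheses of Theorem \ref{K(t,s)} supply. I would therefore not run a fresh induction: the induction is already discharged inside Theorem \ref{K(t,s)}, and the corollary reduces to a substitution plus a product-of-limits argument. A notational subtlety worth flagging is that in \eqref{classconvol} the limit is displayed as acting only on the $f$-factor while $\frac{d^{l-1}}{dt^{l-1}}$ is displayed as acting on the $g$-factor; this asymmetric placement is justified exactly by the factorization above, since the $f$-part contributes a constant and only the $g$-part carries the surviving $t$-dependence.
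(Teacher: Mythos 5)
Your proposal is correct and follows essentially the same route as the paper's own proof: substitute $K(t,s)=f(t-s)g(s)$ into the master identity \eqref{eq1}, factor the diagonal limit of the product into the product of limits, and pull the resulting constant $f^{(n-l)}(0)$ outside the operator $\frac{d^{l-1}}{dt^{l-1}}$ so that it acts only on $g$. Your version is in fact slightly more explicit than the paper's, since you spell out that $\frac{\partial^{m}}{\partial t^{m}}K(t,s)=f^{(m)}(t-s)g(s)$ and that the $f$-limit is a constant, which is precisely the justification the paper leaves implicit.
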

\begin{proof}
	If we write $f(t-s)g(s)$ instead of $K(t,s)$ in \eqref{eq1}, then we obtain
	\begin{align*}
	\frac{d^{n}}{dt^{n}}\int\limits_{0}^{t}f(t-s)g(s)\mathrm{d}s&=\sum_{l=1}^{n}\frac{d^{l-1}}{dt^{l-1}}\left(\lim\limits_{s \to t-0}\left( \frac{\partial^{n-l}}{\partial t^{n-l}}f(t-s)g(s)\right)  \right) \\
	&+\int\limits_{0}^{t}\frac{\partial^{n}}{\partial t^{n}}f(t-s)g(s)\mathrm{d}s\\
	&=\sum_{l=1}^{n}\frac{d^{l-1}}{dt^{l-1}}\left(\lim\limits_{s \to t-0}\frac{\partial^{n-l}}{\partial t^{n-l}}f(t-s)\lim\limits_{s\to t-0}g(s)\right) \\
	&+\int\limits_{0}^{t}\frac{\partial^{n}}{\partial t^{n}}f(t-s)g(s)\mathrm{d}s\\
	&=\sum_{l=1}^{n}\lim\limits_{s \to t-0}\frac{\partial^{n-l}}{\partial t^{n-l}}f(t-s)\frac{d^{l-1}}{dt^{l-1}}\lim\limits_{s \to t-0}g(s)\\
	&+\int\limits_{0}^{t}\frac{\partial^{n}}{\partial t^{n}}f(t-s)g(s)\mathrm{d}s, \quad t>0.
	\end{align*}
	Thus, the proof is complete.
\end{proof}

\section{ Fractional Leibniz integral rules}\label{Generealized fractional Leibniz rules}
Now, we are starting to prove fractional Leibniz integral rule for Riemann-Liouville fractional derivative of order $\alpha \in (n-1,n], n\in \mathbb{N}$. For this, firstly, let us consider partial Riemann-Liouville fractional differentiation operator of order $n-1<\alpha \leq n$, $n\in\mathbb{N}$ with respect to $t$ of a function $K(t,s)$ of two variables $(t,s)\in\mathbb{J}\times\mathbb{J}$, $K:\mathbb{J}\times\mathbb{J}\to\mathbb{R}$, defined by
\cite{Miller-Ross,Oldham-Spanier,Samko-Kilbas-Marichev}:

	\begin{equation}
	\prescript{RL, t}{t_{0}}{D}^{\alpha}_{t} K(t,s) =\frac{\mathrm{\partial}^n}{\mathrm{\partial }t^n}\prescript{t}{t_{0}}{I}^{n-\alpha}_{t}K(t,s) =\frac{1}{\Gamma(n-\alpha)}\frac{\mathrm{\partial }^n}{\mathrm{\partial }t^n}\int\limits^{t}_{t_{0}}(t-s)^{n-\alpha-1}K(s,\tau)\mathrm{d}s, \quad t\in \mathbb{\hat{J}},
	\end{equation}

where $\prescript{t}{t_{0}}{I}^{\alpha}_{t}$ is the partial Riemann-Liouville integral operator of order $\alpha>0$ which is given by:
\begin{equation*}
\prescript{t}{t_{0}}{I}^{\alpha}_{t}K(t,s) =\frac{1}{\Gamma(\alpha)}\int\limits_{t_{0}}^{t}(t-s)^{\alpha-1}K(s,\tau)\mathrm{d}s \\,\quad \text{for} \quad t\in \mathbb{\hat{J}}.
\end{equation*}

 The following important result in the theory of fractional calculus was first proposed by Podlubny \cite{I. Podlubny} for $\alpha \in (0,1]$ in Riemann-Liouville sense as follows:
\begin{equation*}
\prescript{RL}{t_{0}}{D^{\alpha}_{t}}\int\limits_{t_{0}}^{t}K(t,s)\mathrm{d}s=\lim\limits_{s\to t-0}\prescript{t}{s}{I^{1-\alpha}_{t}}K(t,s)+\int\limits_{t_{0}}^{t}\prescript{RL,t}{s}{D^{\alpha}_{t}}K(t,s)\mathrm{d}s, \quad t > t_{0}.
\end{equation*}
Now, we are going to state and prove the following theorem for more general case where $\alpha \in (n-1,n], n \in \mathbb{N}$ which is more useful tool for the testing particular solution of inhomogeneous linear multi-order differential equations with variable coefficients. Note that Matychyn has proposed \cite{I. Matychyn} Leibniz integral rule for Riemann-Liouville derivative of order $0<\alpha\leq 1$ on Lebesgue integration. 
\begin{thm}\label{thm-class}
Let the function $K:\mathbb{J}\times\mathbb{J}\to\mathbb{R}$ be such that the following assumptions are fulfilled:
	
(a) For every fixed $t\in \mathbb{J}$, the function $\hat{K}(t,s)=\prescript{RL,t}{s}{D^{\alpha-1}_{t}}K(t,s)$ is measurable on $\mathbb{J}$ and integrable on $\mathbb{J}$ with respect to some $t^{*}\in \mathbb{J}$;
	
(b) The partial derivative $\prescript{RL,t}{s}{D^{\alpha}_{t}}K(t,s)$ exists for every interior point $(t, s) \in \mathbb{\hat{J}} \times \mathbb{\hat{J}}$;
	
(c) There exists a non-negative integrable function $g$ such that  $\left|\prescript{RL,t}{s}{D^{\alpha}_{t}}K(t,s)\right| \leq g(s)$ for every interior point $(t, s) \in \mathbb{\hat{J}}\times \mathbb{\hat{J}}$;

(d) The derivative $\frac{d^{l-1}}{dt^{l-1}}\lim\limits_{s\to t-0}\prescript{RL,t}{s}{D^{\alpha-l}_{t}}K(t,s)$, $l=1,2,\ldots,n$ exists for every interior point $(t, s) \in \mathbb{\hat{J}} \times \mathbb{\hat{J}}$;
	
Then, the following relation holds true for fractional derivative in Riemann-Liouville sense under Lebesgue integration:

\begin{equation}\label{thm-RL}
\prescript{RL}{t_{0}}{D^{\alpha}_{t}}\int\limits_{t_{0}}^{t}K(t,s)\mathrm{d}s=\sum_{l=1}^{n}\frac{d^{l-1}}{dt^{l-1}}\lim\limits_{s\to t-0}\prescript{RL,t}{s}{D^{\alpha-l}_{t}}K(t,s)+\int\limits_{t_{0}}^{t}\prescript{RL,t}{s}{D^{\alpha}_{t}}K(t,s)\mathrm{d}s, \quad  t \in \mathbb{\hat{J}}.
\end{equation}
\end{thm}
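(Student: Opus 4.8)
The plan is to reduce the fractional statement to the classical higher-order Leibniz rule already established in Theorem \ref{K(t,s)}. By Definition \ref{defRL} the Riemann--Liouville derivative factors as $\prescript{RL}{t_{0}}{D}^{\alpha}_{t}=\frac{d^{n}}{dt^{n}}\circ\prescript{}{t_{0}}{I}^{n-\alpha}_{t}$ with $n=\lceil\alpha\rceil$, so I would first write
\begin{equation*}
\prescript{RL}{t_{0}}{D^{\alpha}_{t}}\int\limits_{t_{0}}^{t}K(t,s)\,\mathrm{d}s=\frac{d^{n}}{dt^{n}}\,\prescript{}{t_{0}}{I}^{n-\alpha}_{t}\left[\int\limits_{t_{0}}^{t}K(t,s)\,\mathrm{d}s\right].
\end{equation*}

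First I would simplify the fractional integral by unfolding its definition and interchanging the order of integration. Writing $F(\tau)=\int_{t_{0}}^{\tau}K(\tau,s)\,\mathrm{d}s$, the operator $\prescript{}{t_{0}}{I}^{n-\alpha}_{t}$ produces the double integral $\frac{1}{\Gamma(n-\alpha)}\int_{t_{0}}^{t}(t-\tau)^{n-\alpha-1}\int_{t_{0}}^{\tau}K(\tau,s)\,\mathrm{d}s\,\mathrm{d}\tau$ over the triangle $t_{0}\le s\le\tau\le t$. Applying the Fubini--Dirichlet theorem to swap $s$ and $\tau$ collapses the inner $\tau$-integral into the partial Riemann--Liouville integral with lower terminal $s$, yielding
\begin{equation*}
\prescript{}{t_{0}}{I}^{n-\alpha}_{t}\left[\int\limits_{t_{0}}^{t}K(t,s)\,\mathrm{d}s\right]=\int\limits_{t_{0}}^{t}\prescript{t}{s}{I}^{n-\alpha}_{t}K(t,s)\,\mathrm{d}s.
\end{equation*}
Setting $\widehat{K}(t,s)\coloneqq\prescript{t}{s}{I}^{n-\alpha}_{t}K(t,s)$, the task reduces to computing $\frac{d^{n}}{dt^{n}}\int_{t_{0}}^{t}\widehat{K}(t,s)\,\mathrm{d}s$.

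Next I would apply the classical higher-order Leibniz rule of Theorem \ref{K(t,s)} to $\widehat{K}$; the hypotheses (a)--(d) of the present theorem are tailored precisely so that $\widehat{K}$ meets the corresponding conditions of that theorem. The final step is to recognize each emerging term as a partial Riemann--Liouville derivative. Since $n-1<\alpha\le n$ gives $(n-l)-1<\alpha-l\le n-l$, the correct integer order attached to $\alpha-l$ is $n-l$, whence
\begin{equation*}
\frac{\partial^{n-l}}{\partial t^{n-l}}\widehat{K}(t,s)=\frac{\partial^{n-l}}{\partial t^{n-l}}\prescript{t}{s}{I}^{(n-l)-(\alpha-l)}_{t}K(t,s)=\prescript{RL,t}{s}{D^{\alpha-l}_{t}}K(t,s),\quad l=1,\dots,n,
\end{equation*}
while the full $n$th derivative satisfies $\frac{\partial^{n}}{\partial t^{n}}\widehat{K}=\prescript{RL,t}{s}{D^{\alpha}_{t}}K$. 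Substituting these identifications into the conclusion of Theorem \ref{K(t,s)} produces \eqref{thm-RL} exactly.

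I expect the main obstacle to be the bookkeeping around the integer orders in this last step: one must verify that the index $m$ in the definition $\prescript{RL,t}{s}{D^{\beta}_{t}}=\partial_{t}^{m}\,\prescript{t}{s}{I}^{m-\beta}_{t}$ really equals $n-l$ for $\beta=\alpha-l$, with attention to the endpoint $\alpha=n$ (where $\alpha-l$ becomes an integer and the fractional integral degenerates to the identity) and to the case $l=n$ (where $\alpha-n\le 0$, so $\prescript{RL,t}{s}{D^{\alpha-n}_{t}}K=\prescript{t}{s}{I}^{n-\alpha}_{t}K=\widehat{K}$ under the convention that a non-positive order denotes the corresponding fractional integral). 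The interchange of integration is routine given assumption (a) together with the local integrability of the kernel $(t-\tau)^{n-\alpha-1}$, but it should be recorded explicitly so that the appeal to Theorem \ref{K(t,s)} for $\widehat{K}$ is fully licensed.
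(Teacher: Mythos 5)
Your proposal is correct and follows essentially the same route as the paper's own proof: unfold the Riemann--Liouville derivative as $\frac{d^{n}}{dt^{n}}\circ\prescript{}{t_{0}}{I}^{n-\alpha}_{t}$, use Fubini's theorem to pull the fractional integral inside and obtain $\frac{d^{n}}{dt^{n}}\int_{t_{0}}^{t}\prescript{t}{s}{I}^{n-\alpha}_{t}K(t,s)\,\mathrm{d}s$, apply the classical higher-order Leibniz rule of Theorem \ref{K(t,s)}, and identify the resulting terms as the partial operators $\prescript{RL,t}{s}{D^{\alpha-l}_{t}}K$ and $\prescript{RL,t}{s}{D^{\alpha}_{t}}K$. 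Your added attention to the integer-order bookkeeping (in particular the $l=n$ case, where $\prescript{RL,t}{s}{D^{\alpha-n}_{t}}$ must be read as the fractional integral $\prescript{t}{s}{I}^{n-\alpha}_{t}$) is a point the paper leaves implicit, but it does not change the argument.
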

\begin{proof}
	Using the Definition \ref{defRL} and Fubini's theorem \cite{Chen}, we have
\allowdisplaybreaks
\begin{align*}
\prescript{RL}{t_{0}}{D^{\alpha}_{t}}\int\limits_{t_{0}}^{t}K(t,s)\mathrm{d}s&=\frac{1}{\Gamma(n-\alpha)}\frac{d^{n}}{dt^{n}}\int\limits_{t_{0}}^{t}(t-\tau)^{n-\alpha-1}\mathrm{d}\tau \int\limits_{t_{0}}^{\tau}K(\tau,s)\mathrm{d}s\\
&=\frac{1}{\Gamma(n-\alpha)}\frac{d^{n}}{dt^{n}}\int\limits_{t_{0}}^{t}\int\limits_{t_{0}}^{\tau}(t-\tau)^{n-\alpha-1}K(\tau,s)\mathrm{d}s\mathrm{d}\tau \\
&=\frac{1}{\Gamma(n-\alpha)}\frac{d^{n}}{dt^{n}}\int\limits_{t_{0}}^{t}\int\limits_{s}^{t}(t-\tau)^{n-\alpha-1}K(\tau,s)\mathrm{d}\tau\mathrm{d}s \\
&=\frac{1}{\Gamma(n-\alpha)}\frac{d^{n}}{dt^{n}}\int\limits_{t_{0}}^{t}\mathrm{d}s\int\limits_{s}^{t}(t-\tau)^{n-\alpha-1}K(\tau,s)\mathrm{d}\tau \\
&=\frac{d^{n}}{dt^{n}}\int\limits_{t_{0}}^{t}\left( \frac{1}{\Gamma(n-\alpha)}\int\limits_{s}^{t}(t-\tau)^{n-\alpha-1}K(\tau,s)\mathrm{d}\tau\right)\mathrm{d}s\\
&=\frac{d^{n}}{dt^{n}}\int\limits_{t_{0}}^{t}\prescript{t}{s}{I^{n-\alpha}_{t}}K(t,s)\mathrm{d}s.
\end{align*}

Using the formula \eqref{eq1} for the last part of above expression, we get a desired result:
\begin{align*}
\prescript{RL}{t_{0}}{D^{\alpha}_{t}}\int\limits_{t_{0}}^{t}K(t,s)\mathrm{d}s&=\sum_{l=1}^{n}\frac{d^{l-1}}{dt^{l-1}}\lim\limits_{s\to t-0}\frac{\partial^{n-l}}{\partial t^{n-l}}\prescript{t}{s}{I^{n-\alpha}_{t}}K(t,s)+\int\limits_{t_{0}}^{t}\frac{\partial^{n}}{\partial t^{n}}\prescript{t}{s}{I^{n-\alpha}_{t}}K(t,s)\mathrm{d}s\\
&=\sum_{l=1}^{n}\frac{d^{l-1}}{dt^{l-1}}\lim\limits_{s\to t-0}\prescript{RL,t}{s}{D^{\alpha-l}_{t}}K(t,s)+\int\limits_{t_{0}}^{t}\prescript{RL,t}{s}{D^{\alpha}_{t}}K(t,s)\mathrm{d}s,\quad t \in \mathbb{\hat{J}}.
\end{align*}
\end{proof}
\begin{coroll}\label{coroll-RL}
If we have $K(t,s)=f(t-s)g(s)$, $t_{0}=0$ and assumptions of Theorem \ref{thm-class} are fulfilled, then following equality holds true for convolution operator in Riemann-Liouville sense for any $n \in\mathbb{N}$:

\begin{align}
\prescript{RL}{0}{D^{\alpha}_{t}}\int\limits_{0}^{t}f(t-s)g(s)\mathrm{d}s&=\sum_{l=1}^{n}\lim\limits_{s\to t-0} \prescript{RL,t}{s}{D^{\alpha-l}_{t}}f(t-s)\frac{d^{l-1}}{dt^{l-1}}\lim\limits_{s\to t-0}g(s)\nonumber\\&+\int\limits_{0}^{t}\prescript{RL,t}{s}{D^{\alpha}_{t}}f(t-s)g(s)\mathrm{d}s, \quad  t >0.
\end{align}
\end{coroll}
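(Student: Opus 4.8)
The plan is to specialize Theorem \ref{thm-class} to the kernel $K(t,s)=f(t-s)g(s)$ with $t_{0}=0$ and then simplify every term using the fact that the partial Riemann-Liouville operator $\prescript{RL,t}{s}{D^{\bullet}_{t}}$ differentiates only in the variable $t$. First I would substitute $K(t,s)=f(t-s)g(s)$ directly into the right-hand side of \eqref{thm-RL}. Since $g(s)$ carries no dependence on $t$, it passes unchanged through each partial operator acting in $t$, so both the integrand and each boundary term factor as
\begin{align*}
\prescript{RL,t}{s}{D^{\alpha}_{t}}\big(f(t-s)g(s)\big)&=g(s)\,\prescript{RL,t}{s}{D^{\alpha}_{t}}f(t-s),\\
\prescript{RL,t}{s}{D^{\alpha-l}_{t}}\big(f(t-s)g(s)\big)&=g(s)\,\prescript{RL,t}{s}{D^{\alpha-l}_{t}}f(t-s).
\end{align*}
The first of these immediately produces the integral term $\int_{0}^{t}\prescript{RL,t}{s}{D^{\alpha}_{t}}f(t-s)\,g(s)\,\mathrm{d}s$ of the claimed identity.

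For the boundary sum I would take the one-sided limit $s\to t-0$ of each factored product and use that the limit of a product equals the product of the limits whenever both exist, giving
\[
\lim_{s\to t-0}\prescript{RL,t}{s}{D^{\alpha-l}_{t}}\big(f(t-s)g(s)\big)=\Big(\lim_{s\to t-0}\prescript{RL,t}{s}{D^{\alpha-l}_{t}}f(t-s)\Big)\Big(\lim_{s\to t-0}g(s)\Big).
\]
The remaining step is to move the operator $\frac{d^{l-1}}{dt^{l-1}}$ past the $f$-factor so that it acts only on $\lim_{s\to t-0}g(s)=g(t)$, exactly mirroring the integer-order convolution corollary that follows Theorem \ref{K(t,s)}.

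The key point --- and the only place requiring a genuine argument --- is that the factor $\lim_{s\to t-0}\prescript{RL,t}{s}{D^{\alpha-l}_{t}}f(t-s)$ is independent of $t$, which is precisely what licenses pulling it outside $\frac{d^{l-1}}{dt^{l-1}}$. I would justify this by translation invariance of the Riemann-Liouville operator: writing $u=t-s$, one has $\prescript{RL,t}{s}{D^{\alpha-l}_{t}}f(t-s)=\big(\prescript{RL}{0}{D^{\alpha-l}_{u}}f\big)(t-s)$, and letting $s\to t-0$ sends $u\to 0^{+}$, so the limit equals the boundary value $\big(\prescript{RL}{0}{D^{\alpha-l}_{u}}f\big)(0^{+})$, a constant in $t$. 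Hence $\frac{d^{l-1}}{dt^{l-1}}$ treats it as a constant and the product collapses to $\big(\lim_{s\to t-0}\prescript{RL,t}{s}{D^{\alpha-l}_{t}}f(t-s)\big)\frac{d^{l-1}}{dt^{l-1}}\lim_{s\to t-0}g(s)$. Assembling the integral term together with this boundary sum over $l=1,\dots,n$ then yields the stated identity. I expect the translation-invariance and boundary-evaluation justification to be the main (though mild) obstacle; every other manipulation is a routine factorisation already carried out in the integer-order convolution corollary.
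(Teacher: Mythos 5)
Your proposal is correct and follows essentially the same route as the paper's own proof: substitute $K(t,s)=f(t-s)g(s)$ into Theorem \ref{thm-class}, factor $g(s)$ out of the $t$-differentiation, split the limit of the product into a product of limits, and let $\frac{d^{l-1}}{dt^{l-1}}$ act only on the $g$-factor. In fact, your translation-invariance argument showing that $\lim_{s\to t-0}\prescript{RL,t}{s}{D^{\alpha-l}_{t}}f(t-s)$ is a constant in $t$ makes explicit the one step the paper's proof glosses over when it pulls the derivative $\frac{d^{l-1}}{dt^{l-1}}$ past the $f$-factor.
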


\begin{proof}
	If we write $f(t-s)g(s)$ instead of $K(t,s)$ in \eqref{thm-RL}, then we obtain
	\begin{align*}
	\prescript{RL}{0}{D^{\alpha}_{t}}\int\limits_{0}^{t}f(t-s)g(s)\mathrm{d}s&=\sum_{l=1}^{n}\frac{d^{l-1}}{dt^{l-1}}\left(\lim\limits_{s \to t-0}\left( \frac{\partial^{n-l}}{\partial t^{n-l}} \prescript{t}{s}{I^{n-\alpha}_{t}}f(t-s)g(s)\right)  \right) \\
	&+\int\limits_{0}^{t}\prescript{RL,t}{s}{D^{\alpha}_{t}}f(t-s)g(s)\mathrm{d}s\\
	&=\sum_{l=1}^{n}\lim\limits_{s \to t-0}\frac{\partial^{n-l}}{\partial t^{n-l}}\prescript{t}{s}{I^{n-\alpha}_{t}}f(t-s)\lim\limits_{s\to t-0}g(s) \\
	&+\int\limits_{0}^{t}\prescript{RL,t}{s}{D^{\alpha}_{t}}f(t-s)g(s)\mathrm{d}s\\
	&=\sum_{l=1}^{n}\lim\limits_{s \to t-0}\prescript{RL,t}{s}{D^{\alpha-l}_{t}}f(t-s)\frac{d^{l-1}}{dt^{l-1}}\lim\limits_{s \to t-0}g(s)\\
	&+\int\limits_{0}^{t}\prescript{RL,t}{s}{D^{\alpha}_{t}}f(t-s)g(s)\mathrm{d}s, \quad t>0.
	\end{align*}
	Thus, the proof is complete.
\end{proof}

Then, we are going to introduce fractional differentiation under the integral sign in Caputo sense which will be useful for checking the candidate solutions of fractional differential equations with multi-orders.
For this, firstly, let us consider partial Caputo fractional differentiation operator of order $n-1<\alpha \leq n$, $n\in\mathbb{N}$ with respect to $t$ of a function $K(t,s)$ of two variables $(t,s)\in\mathbb{J}\times\mathbb{J}$, $K:\mathbb{J}\times\mathbb{J}\to\mathbb{R}$, defined by \cite{Miller-Ross,Oldham-Spanier,Samko-Kilbas-Marichev}:

\begin{equation}
\prescript{C, t}{t_{0}}{D}^{\alpha}_{t}K(t,s) =\prescript{t}{t_{0}}{I}^{n-\alpha}_{t}\frac{\mathrm{\partial}^n}{\mathrm{\partial }t^n}K(t,s) =\frac{1}{\Gamma(n-\alpha)}\int\limits^{t}_{t_{0}}(t-s)^{n-\alpha-1}\frac{\mathrm{\partial }^n}{\mathrm{\partial }s^n}K(s,\tau)\mathrm{d}s, \quad t\in \mathbb{\hat{J}},
\end{equation}

Matychyn and Onyshchenko \cite{Ivan-Viktoriia} showed that the fractional Leibniz integral rule for Caputo fractional derivative coincide with Riemann-Liouville one when $\alpha\in (0, 1]$:
\begin{equation*}
\prescript{C}{t_{0}}{D^{\alpha}_{t}}\int\limits_{t_{0}}^{t}K(t,s)\mathrm{d}s=\lim\limits_{s\to t-0}\prescript{t}{s}{I^{1-\alpha}_{t}}K(t,s)+\int\limits_{t_{0}}^{t}\prescript{RL,t}{s}{D^{\alpha}_{t}}K(t,s)\mathrm{d}s, \quad t \in \mathbb{\hat{J}}.
\end{equation*}
More generally, the fractional Leibniz integral rule for fractional derivative  of order $\alpha \in (n-1,n]$, $n\geq 2$ in Caputo sense is stated and proved in the following theorem.
\allowdisplaybreaks

\begin{thm}
	Let the function $K:\mathbb{J}\times\mathbb{J}\to\mathbb{R}$ be such that the following assumptions are fulfilled.
	
	(a) For every fixed $t\in \mathbb{J}$, the function $\hat{K}(t,s)=\prescript{C,t}{s}{D^{\alpha-1}_{t}}K(t,s)$ is measurable and integrable on $\mathbb{J}$ with respect to some $t^{*}\in \mathbb{J}$;
	
	(b) The partial derivative $\prescript{C,t}{s}{D^{\alpha}_{t}}K(t,s)$ exists for every interior point $(t, s) \in \mathbb{\hat{J}} \times \mathbb{\hat{J}}$;
	
	(c) There exists a non-negative integrable function $g$ such that  $\left|\prescript{C,t}{s}{D^{\alpha}_{t}}K(t,s)\right| \leq g(s)$ for every interior point $(t, s) \in \mathbb{\hat{J}}\times \mathbb{\hat{J}}$;
	
	(d) The integral $\prescript{t}{t_{0}}{I^{n-\alpha}_{t}}\left\lbrace\frac{d^{l-1}}{dt^{l-1}}\lim\limits_{s\to t-0}\frac{\partial^{n-l}}{\partial t^{n-l}}K(t,s)\right\rbrace$ , $l=1,2,\ldots,n$, $n\in\mathbb{N}$ exists for every interior point $(t, s) \in \mathbb{\hat{J}} \times \mathbb{\hat{J}}$;

Then, the following relation holds true for fractional derivative in Caputo sense under Lebesgue integration:
\begin{equation}\label{rel-Cap}
\prescript{C}{t_{0}}{D^{\alpha}_{t}}\int\limits_{t_{0}}^{t}K(t,s)\mathrm{d}s=\prescript{t}{t_{0}}{I^{n-\alpha}_{t}}\left\lbrace \sum_{l=1}^{n}\frac{d^{l-1}}{dt^{l-1}}\lim\limits_{s\to t-0}\frac{\partial^{n-l}}{\partial t^{n-l}}K(t,s)\right\rbrace
+\int\limits_{t_{0}}^{t}\prescript{C,t}{s}{D^{\alpha}_{t}}K(t,s)\mathrm{d}s, \quad t \in \mathbb{\hat{J}}.
\end{equation}
\end{thm}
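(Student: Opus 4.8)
The plan is to reduce the statement to the classical higher-order Leibniz rule already established in Theorem~\ref{K(t,s)}, combined with a Fubini interchange of exactly the kind used in the proof of Theorem~\ref{thm-class}. First I would invoke the defining relation \eqref{Caputoder} for the Caputo operator, writing
\begin{equation*}
\prescript{C}{t_{0}}{D^{\alpha}_{t}}\int\limits_{t_{0}}^{t}K(t,s)\mathrm{d}s = \prescript{}{t_{0}}{I^{n-\alpha}_{t}}\left(\frac{d^{n}}{dt^{n}}\int\limits_{t_{0}}^{t}K(t,s)\mathrm{d}s\right),
\end{equation*}
so that the fractional operator appears as an ordinary $n$th derivative of the parameter integral, followed by a Riemann-Liouville integration of order $n-\alpha$.

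Next I would apply the higher-order Leibniz rule \eqref{eq1} from Theorem~\ref{K(t,s)} to the inner $n$th derivative, replacing it by
\begin{equation*}
\sum_{l=1}^{n}\frac{d^{l-1}}{dt^{l-1}}\lim\limits_{s\to t-0}\frac{\partial^{n-l}}{\partial t^{n-l}}K(t,s) + \int\limits_{t_{0}}^{t}\frac{\partial^{n}}{\partial t^{n}}K(t,s)\mathrm{d}s.
\end{equation*}
By linearity of the integral operator $\prescript{}{t_{0}}{I^{n-\alpha}_{t}}$, the boundary sum produces precisely the first term on the right-hand side of \eqref{rel-Cap}, so it only remains to identify the contribution of the integral term.

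The decisive step is therefore to establish
\begin{equation*}
\prescript{}{t_{0}}{I^{n-\alpha}_{t}}\left\{\int\limits_{t_{0}}^{t}\frac{\partial^{n}}{\partial t^{n}}K(t,s)\mathrm{d}s\right\} = \int\limits_{t_{0}}^{t}\prescript{C,t}{s}{D^{\alpha}_{t}}K(t,s)\mathrm{d}s.
\end{equation*}
Writing the left-hand side out explicitly gives a double integral over the triangle $\{(s,\tau): t_{0}\leq s\leq\tau\leq t\}$ with kernel $(t-\tau)^{n-\alpha-1}\frac{\partial^{n}}{\partial\tau^{n}}K(\tau,s)$; invoking Fubini's theorem I would swap the order of integration so that the inner integral runs over $\tau\in[s,t]$. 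Recognizing that $\frac{1}{\Gamma(n-\alpha)}\int_{s}^{t}(t-\tau)^{n-\alpha-1}\frac{\partial^{n}}{\partial\tau^{n}}K(\tau,s)\mathrm{d}\tau$ is by definition exactly the partial Caputo derivative $\prescript{C,t}{s}{D^{\alpha}_{t}}K(t,s)$ then completes the identification and hence the proof.

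The main obstacle is the rigorous justification of the Fubini interchange together with the existence of the iterated boundary terms, and this is where hypotheses (a)--(d) enter: (a)--(c) control measurability and integrability so that the order of integration may be swapped, while (d) guarantees that the boundary terms $\frac{d^{l-1}}{dt^{l-1}}\lim_{s\to t-0}\frac{\partial^{n-l}}{\partial t^{n-l}}K(t,s)$ remain well defined after the fractional integration $\prescript{}{t_{0}}{I^{n-\alpha}_{t}}$ is applied. I expect everything else to be routine, mirroring the Riemann-Liouville computation already carried out in Theorem~\ref{thm-class}.
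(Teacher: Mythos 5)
Your proposal is correct and follows essentially the same route as the paper's own proof: write the Caputo derivative as $\prescript{}{t_{0}}{I^{n-\alpha}_{t}}$ applied to the $n$th ordinary derivative of the parameter integral, invoke the classical higher-order Leibniz rule \eqref{eq1}, and then use Fubini's theorem to convert $\prescript{}{t_{0}}{I^{n-\alpha}_{t}}$ of the remaining integral term into the integral of the partial Caputo derivative $\prescript{C,t}{s}{D^{\alpha}_{t}}K(t,s)$. The only cosmetic difference is that you separate the linearity and Fubini steps into two displayed identities, whereas the paper carries them out in a single chain of equalities.
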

\begin{proof}
	Using the Definition \ref{defCaputo}, Fubini's theorem \cite{Chen}, and the formula \eqref{eq1} we have
	\allowdisplaybreaks
\begin{align*}
\prescript{C}{t_{0}}{D^{\alpha}_{t}}\int\limits_{t_{0}}^{t}K(t,s)\mathrm{d}s&=\frac{1}{\Gamma(n-\alpha)}\int\limits_{t_{0}}^{t}(t-\tau)^{n-\alpha-1}\mathrm{d}\tau \frac{d^{n}}{d\tau^{n}}\int\limits_{t_{0}}^{\tau}K(\tau,s)\mathrm{d}s\\
&=\frac{1}{\Gamma(n-\alpha)}\int\limits_{t_{0}}^{t}(t-\tau)^{n-\alpha-1}\sum_{l=1}^{n}\frac{d^{l-1}}{d\tau^{l-1}}\lim\limits_{s\to {\tau-0}}\frac{\partial^{n-l}}{\partial\tau^{n-l}}K(\tau,s)\mathrm{d}\tau\\
&+\frac{1}{\Gamma(n-\alpha)}\int\limits_{t_{0}}^{t}(t-\tau)^{n-\alpha-1}\mathrm{d}\tau \int\limits_{t_{0}}^{\tau}\frac{\partial^{n}}{\partial \tau^{n}}K(\tau,s)\mathrm{d}s\\
&=\prescript{t}{t_{0}}{I^{n-\alpha}_{t}}\left\lbrace \sum_{l=1}^{n}\frac{d^{l-1}}{dt^{l-1}}\lim\limits_{s\to t-0}\frac{\partial^{n-l}}{\partial t^{n-l}}K(t,s)\right\rbrace\\
&+\frac{1}{\Gamma(n-\alpha)}\int\limits_{t_{0}}^{t}\int\limits_{t_{0}}^{\tau}(t-\tau)^{n-\alpha-1} \frac{\partial^{n}}{\partial \tau^{n}}K(\tau,s)\mathrm{d}s\mathrm{d}\tau  \\
&=\prescript{t}{t_{0}}{I^{n-\alpha}_{t}}\left\lbrace \sum_{l=1}^{n}\frac{d^{l-1}}{dt^{l-1}}\lim\limits_{s\to t-0}\frac{\partial^{n-l}}{\partial t^{n-l}}K(t,s)\right\rbrace\\
&+\frac{1}{\Gamma(n-\alpha)}\int\limits_{t_{0}}^{t}\int\limits_{s}^{t}(t-\tau)^{n-\alpha-1} \frac{\partial^{n}}{\partial \tau^{n}}K(\tau,s)\mathrm{d}\tau \mathrm{d}s \\
&=\prescript{t}{t_{0}}{I^{n-\alpha}_{t}}\left\lbrace \sum_{l=1}^{n}\frac{d^{l-1}}{dt^{l-1}}\lim\limits_{s\to t-0}\frac{\partial^{n-l}}{\partial t^{n-l}}K(t,s)\right\rbrace\\
&+\frac{1}{\Gamma(n-\alpha)}\int\limits_{t_{0}}^{t}\mathrm{d}s\int\limits_{s}^{t}(t-\tau)^{n-\alpha-1} \frac{\partial^{n}}{\partial \tau^{n}}K(\tau,s)\mathrm{d}\tau  \\
&=\prescript{t}{t_{0}}{I^{n-\alpha}_{t}}\left\lbrace \sum_{l=1}^{n}\frac{d^{l-1}}{dt^{l-1}}\lim\limits_{s\to t-0}\frac{\partial^{n-l}}{\partial t^{n-l}}K(t,s)\right\rbrace\\
&+\int\limits_{t_{0}}^{t}\left( \frac{1}{\Gamma(n-\alpha)}\int\limits_{s}^{t}(t-\tau)^{n-\alpha-1} \frac{\partial^{n}}{\partial \tau^{n}}K(\tau,s)\mathrm{d}\tau\right)\mathrm{d}s   \\
&=\prescript{t}{t_{0}}{I^{n-\alpha}_{t}}\left\lbrace \sum_{l=1}^{n}\frac{d^{l-1}}{dt^{l-1}}\lim\limits_{s\to t-0}\frac{\partial^{n-l}}{\partial t^{n-l}}K(t,s)\right\rbrace\\
&+\int\limits_{t_{0}}^{t} \prescript{t}{s}{I^{n-\alpha}_{t}}\frac{\partial^{n}}{\partial t^{n}}K(t,s)\mathrm{d}s\\
&=\prescript{t}{t_{0}}{I^{n-\alpha}_{t}}\left\lbrace \sum_{l=1}^{n}\frac{d^{l-1}}{dt^{l-1}}\lim\limits_{s\to t-0}\frac{\partial^{n-l}}{\partial t^{n-l}}K(t,s)\right\rbrace\\
&+\int\limits_{t_{0}}^{t}\prescript{C,t}{s}{D^{\alpha}_{t}}K(t,s)\mathrm{d}s,  \quad t \in \mathbb{\hat{J}}.
\end{align*}
Therefore, the proof is compete.
\end{proof}
However, the fractional Leibniz integral rule for Caputo derivative of order $0<\alpha\leq1$ is different from the general case which is given in the relation \eqref{rel-Cap}.
\begin{thm}\label{Leibniz rule-Caputo}
	Let the function $K:\mathbb{J}\times\mathbb{J}\to\mathbb{R}$ be such that the following assumptions are fulfilled.
	
	(a) For every fixed $t\in \mathbb{J}$, the function $\hat{K}(t,s)=\prescript{t}{s}{I^{1-\alpha}_{t}}K(t,s)$ is measurable on $\mathbb{J}$ and integrable on $\mathbb{J}$ with respect to some $t^{*}\in \mathbb{J}$;
	
	(b) The partial derivative $\prescript{RL,t}{s}{D^{\alpha}_{t}}K(t,s)$ exists for every interior point $(t, s) \in \mathbb{\hat{J}} \times \mathbb{\hat{J}}$;
	
	(c) There exists a non-negative integrable function $g$ such that  $\left|\prescript{RL,t}{s}{D^{\alpha}_{t}}K(t,s)\right| \leq g(s)$ for every interior point $(t, s) \in \mathbb{\hat{J}}\times \mathbb{\hat{J}}$;

	Then, the Caputo fractional derivative under Lebesgue integration coincides with the fractional differentiation of an integral in Riemann-Liouville sense for $0<\alpha\leq1$:
	
	\begin{equation}\label{thm-RL-C}
	\prescript{C}{t_{0}}{D^{\alpha}_{t}}\int\limits_{t_{0}}^{t}K(t,s)\mathrm{d}s=\lim\limits_{s\to t-0}\prescript{t}{s}{I^{1-\alpha}_{t}}K(t,s)+\int\limits_{t_{0}}^{t}\prescript{RL,t}{s}{D^{\alpha}_{t}}K(t,s)\mathrm{d}s, \quad  t \in \mathbb{\hat{J}}.
	\end{equation}
\end{thm}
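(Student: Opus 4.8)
The plan is to reduce this Caputo statement directly to the Riemann--Liouville case already established in Theorem \ref{thm-class}, by exploiting the fact that the inner parameter integral vanishes at the lower terminal. Concretely, I would set $F(t)=\int_{t_{0}}^{t}K(t,s)\,\mathrm{d}s$ and observe that $F(t_{0})=\int_{t_{0}}^{t_{0}}K(t_{0},s)\,\mathrm{d}s=0$. Since we are in the regime $0<\alpha\leq1$, the relevant integer is $n=1$, and the relationship \eqref{relationship} between the two differentiation operators collapses to a single boundary term,
\begin{equation*}
\prescript{C}{t_{0}}{D^{\alpha}_{t}}F(t)=\prescript{RL}{t_{0}}{D^{\alpha}_{t}}F(t)-\frac{(t-t_{0})^{-\alpha}}{\Gamma(1-\alpha)}F(t_{0}).
\end{equation*}
Because $F(t_{0})=0$, this boundary term drops out, so the Caputo and Riemann--Liouville fractional derivatives of $F$ agree identically on $\mathbb{\hat{J}}$.

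Next I would invoke Theorem \ref{thm-class} in the special case $n=1$, whose hypotheses are exactly assumptions (a)--(c) stated here. Its right-hand side reduces to a single summand ($l=1$), yielding
\begin{equation*}
\prescript{RL}{t_{0}}{D^{\alpha}_{t}}\int\limits_{t_{0}}^{t}K(t,s)\,\mathrm{d}s=\lim\limits_{s\to t-0}\prescript{RL,t}{s}{D^{\alpha-1}_{t}}K(t,s)+\int\limits_{t_{0}}^{t}\prescript{RL,t}{s}{D^{\alpha}_{t}}K(t,s)\,\mathrm{d}s.
\end{equation*}
The only point needing genuine care is the identification of the boundary term: for $0<\alpha\leq1$ the order $\alpha-1\in(-1,0]$ is nonpositive, so the Riemann--Liouville derivative $\prescript{RL,t}{s}{D^{\alpha-1}_{t}}$ is by definition the Riemann--Liouville integral $\prescript{t}{s}{I^{1-\alpha}_{t}}$ of order $1-\alpha\geq0$. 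Substituting this identification turns the display above into precisely the asserted formula \eqref{thm-RL-C}.

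Combining the two steps completes the argument: the left-hand side of \eqref{thm-RL-C} equals $\prescript{C}{t_{0}}{D^{\alpha}_{t}}F=\prescript{RL}{t_{0}}{D^{\alpha}_{t}}F$ by the first paragraph, and Theorem \ref{thm-class} together with the negative-order identification gives the claimed right-hand side. I do not expect a serious obstacle, since the heavy lifting (Fubini's theorem and the classical higher-order Leibniz rule \eqref{eq1}) is already carried out inside Theorem \ref{thm-class}; the only facts to verify directly are that $F$ is regular enough at $t=t_{0}$ for \eqref{relationship} to apply and that $F(t_{0})=0$, both of which are immediate. The conceptual content is simply that the homogeneous behaviour of the parameter integral at the lower limit is exactly what forces the Caputo and Riemann--Liouville Leibniz rules to coincide throughout the range $0<\alpha\leq1$, in contrast to the genuinely different formula \eqref{rel-Cap} obtained for $n\geq2$.
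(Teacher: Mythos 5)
Your proposal is correct and follows essentially the same route as the paper: both reduce the Caputo case to the Riemann--Liouville Leibniz rule via relation \eqref{relationship} with $n=1$, noting that the boundary term vanishes because $\int_{t_{0}}^{t_{0}}K(t_{0},s)\,\mathrm{d}s=0$. Your explicit identification of $\prescript{RL,t}{s}{D^{\alpha-1}_{t}}$ with $\prescript{t}{s}{I^{1-\alpha}_{t}}$ for $\alpha-1\in(-1,0]$ is a point the paper leaves implicit, but it is the same argument.
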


\begin{proof}
In accordance the formula \eqref{relationship}, it is obvious for $\alpha \in (0,1]$:
\begin{equation}\label{formula}
 \left( \prescript{C}{t_{0}}{D^{\alpha}_{t}}g\right) (t)= \left( \prescript{RL}{t_{0}}{D^{\alpha}_{t}}g\right) (t)- \frac{(t-t_{0})^{-\alpha}}{\Gamma(1-\alpha)}g(t_{0}), \quad  t\in\mathbb{\hat{J}}.
\end{equation}
Since the formula \eqref{formula}, we can attain that
\begin{align*}
\prescript{C}{t_{0}}{D^{\alpha}_{t}}\int\limits_{t_{0}}^{t}K(t,s)\mathrm{d}s&=\prescript{RL}{t_{0}}{D^{\alpha}_{t}}\int\limits_{t_{0}}^{t}K(t,s)\mathrm{d}s-\left[ \int\limits_{t_{0}}^{t}K(t,s)\mathrm{d}s\right] _{t=t_{0}}\times\frac{(t-t_{0})^{-\alpha}}{\Gamma(1-\alpha)}\\&=\prescript{RL}{t_{0}}{D^{\alpha}_{t}}\int\limits_{t_{0}}^{t}K(t,s)\mathrm{d}s,\quad t \in \mathbb{\hat{J}}.
\end{align*}
Therefore, fractional Leibniz integral rule for Caputo derivative is identical with the Riemann-Liouville one whenever $0< \alpha \leq 1$:
\begin{equation*}
\prescript{C}{t_{0}}{D^{\alpha}_{t}}\int\limits_{t_{0}}^{t}K(t,s)\mathrm{d}s=\lim\limits_{s\to t-0}\prescript{t}{s}{I^{1-\alpha}_{t}}K(t,s)+\int\limits_{t_{0}}^{t}\prescript{RL,t}{s}{D^{\alpha}_{t}}K(t,s)\mathrm{d}s, \quad  t \in \mathbb{\hat{J}}.
\end{equation*}
\end{proof}
It is important to introduce Caputo fractional derivative of convolution operator in general sense which is so accurate tool for testing particular solution of Caputo type multi-term FDEs.
\begin{coroll}
	If we have $K(t,s)=f(t-s)g(s)$, $t_{0}=0$, and assumptions of Theorem \ref{Leibniz rule-Caputo} are fulfilled, then following equality holds true for convolution operator in Caputo's sense of order $ \alpha \in (n-1,n]$, where $n\geq 2$:
\begin{equation}\label{CFLRC}
\prescript{C}{0}{D^{\alpha}_{t}}\int\limits_{0}^{t}f(t-s)g(s)\mathrm{d}s=\prescript{t}{0}{I^{n-\alpha}_{t}}\left\lbrace  \sum_{l=1}^{n}\lim\limits_{s\to t-0}\frac{\partial^{n-l}}{\partial t^{n-l}}f(t-s)\frac{d^{l-1}}{dt^{l-1}}\lim\limits_{s\to t-0}g(s)\right\rbrace 
+\int\limits_{0}^{t}\prescript{C,t}{s}{D^{\alpha}_{t}}f(\tau-s)g(s)\mathrm{d}s, \quad t>0.
\end{equation}
\end{coroll}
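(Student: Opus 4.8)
The plan is to obtain this formula as a direct specialization of the general Caputo fractional Leibniz integral rule \eqref{rel-Cap}, in exactly the same way Corollary \ref{coroll-RL} was deduced from Theorem \ref{thm-class} in the Riemann--Liouville setting. I would begin by inserting the convolution kernel $K(t,s)=f(t-s)g(s)$ together with $t_{0}=0$ into \eqref{rel-Cap}, so that the right-hand side reads
\[
\prescript{t}{0}{I^{n-\alpha}_{t}}\left\lbrace \sum_{l=1}^{n}\frac{d^{l-1}}{dt^{l-1}}\lim\limits_{s\to t-0}\frac{\partial^{n-l}}{\partial t^{n-l}}\bigl(f(t-s)g(s)\bigr)\right\rbrace+\int\limits_{0}^{t}\prescript{C,t}{s}{D^{\alpha}_{t}}\bigl(f(t-s)g(s)\bigr)\mathrm{d}s.
\]
The entire task then reduces to rewriting the two kinds of terms appearing here in the factored form displayed in \eqref{CFLRC}.

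For the boundary (limit) terms I would use that $g(s)$ carries no dependence on $t$, so the operator $\frac{\partial^{n-l}}{\partial t^{n-l}}$ acts only on the first factor, giving $\frac{\partial^{n-l}}{\partial t^{n-l}}\bigl(f(t-s)g(s)\bigr)=g(s)\,\frac{\partial^{n-l}}{\partial t^{n-l}}f(t-s)$. Passing to the limit $s\to t-0$ factorizes the product, since $\frac{\partial^{n-l}}{\partial t^{n-l}}f(t-s)$ tends to a function of $t$ alone while $g(s)\to\lim_{s\to t-0}g(s)$; applying the outer total derivative $\frac{d^{l-1}}{dt^{l-1}}$ then reproduces precisely the summand $\lim\limits_{s\to t-0}\frac{\partial^{n-l}}{\partial t^{n-l}}f(t-s)\,\frac{d^{l-1}}{dt^{l-1}}\lim\limits_{s\to t-0}g(s)$. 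This is the identical manipulation already carried out in the classical convolution corollary \eqref{classconvol} and in Corollary \ref{coroll-RL}, and it transfers verbatim under the linear operator $\prescript{t}{0}{I^{n-\alpha}_{t}}$, which acts in $t$.

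For the integral term, the same observation that $g(s)$ is constant in $t$ yields $\prescript{C,t}{s}{D^{\alpha}_{t}}\bigl(f(t-s)g(s)\bigr)=g(s)\,\prescript{C,t}{s}{D^{\alpha}_{t}}f(t-s)$ directly from the definition of the partial Caputo operator, so the second integral on the right of \eqref{CFLRC} is already in the required form after substitution. Collecting the two pieces gives the claim for $\alpha\in(n-1,n]$ with $n\geq 2$.

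The point requiring care---rather than a genuine obstacle---is the factorization of the product limit together with its interchange with the outer derivative $\frac{d^{l-1}}{dt^{l-1}}$ and with the fractional integral $\prescript{t}{0}{I^{n-\alpha}_{t}}$. These interchanges are legitimate precisely because of the measurability, domination, and differentiability hypotheses inherited from the general Caputo Leibniz rule under which \eqref{rel-Cap} was established; once \eqref{rel-Cap} is available, no new analytic input is needed and the remainder is an algebraic rearrangement dictated by the convolution structure.
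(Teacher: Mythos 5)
Your proposal is correct and follows essentially the same route as the paper: substitute $K(t,s)=f(t-s)g(s)$ with $t_{0}=0$ into the general Caputo rule \eqref{rel-Cap}, note that $g(s)$ is independent of $t$ so the partial derivatives and the partial Caputo operator act only on $f(t-s)$, and factor the product limits (which is legitimate here since $\lim_{s\to t-0}\frac{\partial^{n-l}}{\partial t^{n-l}}f(t-s)$ is the constant $f^{(n-l)}(0^{+})$) before applying $\frac{d^{l-1}}{dt^{l-1}}$ and the outer fractional integral $\prescript{t}{0}{I^{n-\alpha}_{t}}$. This is exactly the algebraic rearrangement the paper performs, mirroring its Riemann--Liouville analogue in Corollary \ref{coroll-RL}.
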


\begin{proof}
	If we write $f(t-s)g(s)$ instead of $K(t,s)$ in \eqref{CFLRC}, then we acquire
	\begin{align*}
	\prescript{C}{0}{D^{\alpha}_{t}}\int\limits_{0}^{t}f(t-s)g(s)\mathrm{d}s&=\prescript{t}{0}{I^{n-\alpha}_{t}}\left\lbrace \sum_{l=1}^{n}\frac{d^{l-1}}{dt^{l-1}}\lim\limits_{s\to t-0}\frac{\partial^{n-l}}{\partial t^{n-l}}f(t-s)g(s)\right\rbrace \\
	&+\int\limits_{0}^{t}\prescript{C,t}{s}{D^{\alpha}_{t}}f(t-s)g(s)\mathrm{d}s\\
	&=\prescript{t}{0}{I^{n-\alpha}_{t}}\left\lbrace \sum_{l=1}^{n}\frac{d^{l-1}}{dt^{l-1}}\left( \lim\limits_{s \to t-0}\frac{\partial^{n-l}}{\partial t^{n-l}}f(t-s)\lim\limits_{s\to t-0}g(s)\right) \right\rbrace  \\
	&+\int\limits_{0}^{t}\prescript{C,t}{s}{D^{\alpha}_{t}}f(t-s)g(s)\mathrm{d}s\\
	&=\prescript{t}{0}{I^{n-\alpha}_{t}}\left\lbrace \sum_{l=1}^{n} \lim\limits_{s \to t-0}\frac{\partial^{n-l}}{\partial t^{n-l}}f(t-s)\frac{d^{l-1}}{dt^{l-1}}\lim\limits_{s\to t-0}g(s) \right\rbrace  \\
	&+\int\limits_{0}^{t}\prescript{C,t}{s}{D^{\alpha}_{t}}f(t-s)g(s)\mathrm{d}s, \quad t>0.
	\end{align*}
	Thus, the proof is complete.
\end{proof}

\begin{coroll}
	If we have $K(t,s)=f(t-s)g(s)$, $t_{0}=0$, and assumptions of Theorem \ref{Leibniz rule-Caputo} are fulfilled, then following equality holds true for convolution operator in Caputo's sense for $\alpha \in (0,1]$:
	\begin{equation}\label{special-1}
	\prescript{C}{0}{D^{\alpha}_{t}}\int\limits_{0}^{t}f(t-s)g(s)\mathrm{d}s=\lim\limits_{s\to t-0}\prescript{t}{s}{I^{1-\alpha}_{t}}f(t-s)\lim\limits_{s\to t-0}g(s)+\int\limits_{0}^{t}\prescript{RL,t}{s}{D^{\alpha}_{t}}f(t-s)g(s)\mathrm{d}s, \quad  t > 0.
	\end{equation}
\end{coroll}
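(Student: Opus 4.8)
The plan is to specialize the general Caputo fractional Leibniz integral rule of Theorem \ref{Leibniz rule-Caputo}, namely the identity \eqref{thm-RL-C}, to the convolution kernel $K(t,s)=f(t-s)g(s)$ with $t_{0}=0$. Substituting this choice of $K$ into \eqref{thm-RL-C} immediately produces the skeleton of the claimed formula \eqref{special-1}, so the whole argument reduces to simplifying the boundary term and the integrand that result from the substitution.

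First I would observe that both partial operators appearing in \eqref{thm-RL-C} act only on the $t$-variable: by definition $\prescript{t}{s}{I^{1-\alpha}_{t}}$ and $\prescript{RL,t}{s}{D^{\alpha}_{t}}$ integrate (respectively integrate-and-differentiate) against a dummy variable $\tau$ ranging over $[s,t]$, while the second slot $s$ enters only as the lower terminal. Since the factor $g(s)$ is constant with respect to $\tau$, it may be pulled outside each operator, giving $\prescript{t}{s}{I^{1-\alpha}_{t}}\{f(t-s)g(s)\}=g(s)\,\prescript{t}{s}{I^{1-\alpha}_{t}}f(t-s)$ and likewise $\prescript{RL,t}{s}{D^{\alpha}_{t}}\{f(t-s)g(s)\}=g(s)\,\prescript{RL,t}{s}{D^{\alpha}_{t}}f(t-s)$. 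The second identity converts the integrand of \eqref{thm-RL-C} directly into $\prescript{RL,t}{s}{D^{\alpha}_{t}}f(t-s)\,g(s)$, which is precisely the integral term of \eqref{special-1}.

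For the boundary term I would then pass to the one-sided limit $s\to t-0$ of $g(s)\,\prescript{t}{s}{I^{1-\alpha}_{t}}f(t-s)$. Under the hypotheses of Theorem \ref{Leibniz rule-Caputo} the limit $\lim_{s\to t-0}\prescript{t}{s}{I^{1-\alpha}_{t}}f(t-s)$ exists, and $\lim_{s\to t-0}g(s)$ exists as well, so the limit of the product factors into the product of the limits, yielding $\lim_{s\to t-0}\prescript{t}{s}{I^{1-\alpha}_{t}}f(t-s)\cdot\lim_{s\to t-0}g(s)$, exactly the boundary term of \eqref{special-1}; collecting the two pieces finishes the proof. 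The computation is essentially routine, mirroring the factorization already carried out in the proof of Corollary \ref{coroll-RL} but now in the $\alpha\in(0,1]$ Caputo setting. The only point demanding care — and the nearest thing to an obstacle — is this splitting of the boundary limit, where one must invoke the separate existence of $\lim_{s\to t-0}g(s)$ and $\lim_{s\to t-0}\prescript{t}{s}{I^{1-\alpha}_{t}}f(t-s)$ before rewriting the limit of a product as the product of limits.
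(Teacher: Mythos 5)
Your proposal is correct and follows exactly the paper's route: the paper's own proof simply says that substituting $K(t,s)=f(t-s)g(s)$ into the relation \eqref{thm-RL-C} of Theorem \ref{Leibniz rule-Caputo} makes the result straightforward, and omits the details. The details you supply --- pulling the factor $g(s)$ outside the partial operators $\prescript{t}{s}{I^{1-\alpha}_{t}}$ and $\prescript{RL,t}{s}{D^{\alpha}_{t}}$ since they act only in the $t$-variable, and then splitting the limit of the product into a product of limits --- are precisely the steps the paper carries out explicitly in the analogous Riemann--Liouville case (Corollary \ref{coroll-RL}), so your write-up is a faithful completion of the paper's argument.
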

\begin{proof}
If we make use of the substitution $K(t,s)=f(t-s)g(s)$ in the relation \eqref{thm-RL-C}, the proof is straightforward. So, we omit it here.
\end{proof}
\begin{thm}\label{relation}
The relationship between  Leibniz integral rule for Riemann-Liouville and Caputo fractional differentiation operators of order $n-1<\alpha\leq n$, $n\geq 2$ holds true:
\begin{equation}
\prescript{RL}{t_{0}}{D^{\alpha}_{t}}\int_{t_{0}}^{t}K(t,s)\mathrm{d}s=\prescript{C}{t_{0}}{D^{\alpha}_{t}}\int_{t_{0}}^{t}K(t,s)\mathrm{d}s+\sum_{i=1}^{n-1}\sum_{l=1}^{i}\left[ \frac{d^{l-1}}{dt^{l-1}}\lim\limits_{s\to t-0}\frac{\partial^{i-l}}{\partial t^{i-l}}K(t,s)\right] _{t=t_{0}}\times\frac{(t-t_{0})^{i-\alpha}}{\Gamma(i-\alpha+1)}, t\in\mathbb{\hat{J}}.
\end{equation}	
\end{thm}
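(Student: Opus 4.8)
The plan is to reduce the entire statement to the pointwise relationship \eqref{relationship} between the two fractional differentiation operators, applied to the single-variable function $F(t)=\int_{t_0}^{t}K(t,s)\,\mathrm{d}s$. Since $F$ inherits sufficient smoothness from the hypotheses on $K$ imposed in Theorem \ref{K(t,s)}, formula \eqref{relationship} gives
\begin{equation*}
\prescript{C}{t_0}{D^{\alpha}_{t}}F(t)=\prescript{RL}{t_0}{D^{\alpha}_{t}}F(t)-\sum_{k=0}^{n-1}\frac{(t-t_0)^{k-\alpha}F^{(k)}(t_0)}{\Gamma(k-\alpha+1)},
\end{equation*}
which I would at once rearrange to isolate $\prescript{RL}{t_0}{D^{\alpha}_{t}}F$ on the left. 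The whole argument then reduces to evaluating the classical derivatives $F^{(k)}(t_0)$.

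For each $k\in\{1,\dots,n-1\}$ I would invoke the higher-order Leibniz rule \eqref{eq1} of Theorem \ref{K(t,s)}, with $n$ replaced by $k$, to obtain
\begin{equation*}
F^{(k)}(t)=\sum_{l=1}^{k}\frac{d^{l-1}}{dt^{l-1}}\lim_{s\to t-0}\frac{\partial^{k-l}}{\partial t^{k-l}}K(t,s)+\int_{t_0}^{t}\frac{\partial^{k}}{\partial t^{k}}K(t,s)\,\mathrm{d}s.
\end{equation*}
The key observation is that, upon setting $t=t_0$, the trailing integral collapses to $\int_{t_0}^{t_0}(\cdots)\,\mathrm{d}s=0$, so that only the boundary sum survives; moreover the $k=0$ term in \eqref{relationship} drops out entirely because $F(t_0)=\int_{t_0}^{t_0}K(t_0,s)\,\mathrm{d}s=0$. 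Substituting the resulting expressions $F^{(k)}(t_0)=\sum_{l=1}^{k}\bigl[\tfrac{d^{l-1}}{dt^{l-1}}\lim_{s\to t-0}\tfrac{\partial^{k-l}}{\partial t^{k-l}}K(t,s)\bigr]_{t=t_0}$ into the rearranged identity and relabelling the outer index $k$ as $i$ yields precisely the claimed double sum $\sum_{i=1}^{n-1}\sum_{l=1}^{i}$.

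The proof is essentially bookkeeping rather than hard analysis, so I do not expect a genuine obstacle. The only points requiring care are verifying that $F$ is $(n-1)$-times continuously differentiable so that \eqref{relationship} legitimately applies to it (this follows from assumption (d) of Theorem \ref{K(t,s)} together with the integrability hypotheses), and tracking the index shift carefully so that the simultaneous vanishing of the integral remainder and of the $k=0$ term lines up the two summations exactly as in the statement.
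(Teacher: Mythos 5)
Your proposal is correct and follows essentially the same route as the paper's own proof: the paper likewise applies the Riemann--Liouville/Caputo relationship \eqref{relationship} to $F(t)=\int_{t_0}^{t}K(t,s)\,\mathrm{d}s$, expands each $F^{(i)}(t_0)$ via the classical higher-order Leibniz rule \eqref{eq1} of Theorem \ref{K(t,s)}, and discards the $i=0$ term and the integral remainders because they vanish at $t=t_0$. The only cosmetic difference is that the paper carries these vanishing terms explicitly through one intermediate display before dropping them, whereas you argue their vanishing up front.
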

\begin{proof}
	Using the relationship between Riemann-Liouville and Caputo fractional derivatives \eqref{relationship}, we get
	\begin{align}
	\prescript{RL}{t_{0}}{D^{\alpha}_{t}}\int_{t_{0}}^{t}K(t,s)\mathrm{d}s&=
	\prescript{C}{t_{0}}{D^{\alpha}_{t}}\int_{t_{0}}^{t}K(t,s)\mathrm{d}s+\sum_{i=0}^{n-1}\frac{(t-t_{0})^{i-\alpha}}{\Gamma(i-\alpha+1)}\Big[\frac{d^{i}}{d t^{i}}\int_{t_{0}}^{t}K(t,s)\mathrm{d}s\Big]_{t=t_{0}}\nonumber\\
	&=\prescript{C}{t_{0}}{D^{\alpha}_{t}}\int_{t_{0}}^{t}K(t,s)\mathrm{d}s+\sum_{i=1}^{n-1}\frac{(t-t_{0})^{i-\alpha}}{\Gamma(i-\alpha+1)}\Big[\sum_{l=1}^{i}\frac{d^{l-1}}{dt^{l-1}}\lim\limits_{s\to t-0}\frac{\partial^{i-l}}{\partial t^{i-l}}K(t,s)\Big]_{t=t_{0}}\nonumber\\
	&+\frac{(t-t_{0})^{-\alpha}}{\Gamma(1-\alpha)}\left[ \int_{t_{0}}^{t}K(t,s)\mathrm{d}s\right] _{t=t_{0}}
	+\sum_{i=0}^{n-1}\frac{(t-t_{0})^{i-\alpha}}{\Gamma(i-\alpha+1)}\Big[\int_{t_{0}}^{t}\frac{\partial^{i}}{\partial t^{i}}K(t,s)\mathrm{d}s\Big]_{t=t_{0}}\nonumber\\
	&=\prescript{C}{t_{0}}{D^{\alpha}_{t}}\int_{t_{0}}^{t}K(t,s)\mathrm{d}s+\sum_{i=1}^{n-1}\sum_{l=1}^{i}\left[ \frac{d^{l-1}}{dt^{l-1}}\lim\limits_{s\to t-0}\frac{\partial^{i-l}}{\partial t^{i-l}}K(t,s)\right] _{t=t_{0}}\times\frac{(t-t_{0})^{i-\alpha}}{\Gamma(i-\alpha+1)}, t \in \mathbb{\hat{J}}.
	\end{align}
\end{proof}

\begin{coroll}
 If we replace $K(t,s)$ with $f(t-s)g(s)$ and consider $t_{0}=0$ for lower bound of the integral in Theorem \ref{relation}, the relationship between Riemann-Liouville and Caputo type Leibniz integral rules for convolution operator of the functions $f$ and $g$ holds true for $n-1<\alpha\leq n, n\geq2$:
\end{coroll}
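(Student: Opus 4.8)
The plan is to derive this corollary as a direct specialization of Theorem~\ref{relation}, substituting the convolution kernel $K(t,s)=f(t-s)g(s)$ and setting $t_{0}=0$. No new analytic machinery is needed: the entire task is to simplify the boundary double-sum, and the required limit manipulation is identical to the one already carried out in the proofs of the convolution corollaries (in particular Corollary~\ref{coroll-RL} and its classical counterpart \eqref{classconvol}).

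First I would observe that $g(s)$ is independent of $t$, so the partial derivatives in $t$ act only on $f(t-s)$, giving $\frac{\partial^{i-l}}{\partial t^{i-l}}K(t,s)=f^{(i-l)}(t-s)\,g(s)$. Next, in taking $s\to t-0$ the argument $t-s\to 0^{+}$, so $f^{(i-l)}(t-s)\to f^{(i-l)}(0)$ while $g(s)\to g(t)$; exactly as in Corollary~\ref{coroll-RL}, the limit of the product splits into the product of the individual limits. Because $f^{(i-l)}(0)$ is a constant in $t$, it can be pulled outside the outer operator $\frac{d^{l-1}}{dt^{l-1}}$, which then acts on $\lim_{s\to t-0}g(s)=g(t)$ to produce $g^{(l-1)}(t)$.

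Finally I would evaluate the resulting expression at $t=t_{0}=0$, collapsing each boundary term to the constant $f^{(i-l)}(0)\,g^{(l-1)}(0)$, and substitute back into the double sum of Theorem~\ref{relation}. I expect this to yield
\begin{equation*}
\prescript{RL}{0}{D^{\alpha}_{t}}\int_{0}^{t}f(t-s)g(s)\,\mathrm{d}s=\prescript{C}{0}{D^{\alpha}_{t}}\int_{0}^{t}f(t-s)g(s)\,\mathrm{d}s+\sum_{i=1}^{n-1}\sum_{l=1}^{i}f^{(i-l)}(0)\,g^{(l-1)}(0)\,\frac{t^{i-\alpha}}{\Gamma(i-\alpha+1)},
\end{equation*}
valid for $n-1<\alpha\leq n$, $n\geq 2$ and $t>0$.

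The one step requiring care is the interchange of the one-sided limit $\lim_{s\to t-0}$ with the ordinary derivative $\frac{d^{l-1}}{dt^{l-1}}$ together with the factorization of the product limit into a product of limits. This is precisely the manipulation already legitimized under the continuity and differentiability hypotheses inherited from Theorem~\ref{relation} and used verbatim in Corollary~\ref{coroll-RL}, so I anticipate no genuine difficulty here; everything else is purely index bookkeeping on $i$ and $l$.
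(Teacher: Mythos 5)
Your proposal is correct and follows exactly the paper's route: a direct substitution of $K(t,s)=f(t-s)g(s)$ and $t_{0}=0$ into Theorem~\ref{relation}, with the same splitting of the limit into $f^{(i-l)}(0)$ times $g^{(l-1)}$ evaluated at $t=0$ that the paper already uses in Corollary~\ref{coroll-RL}. In fact your final formula, with the $i$-dependent factor $\frac{t^{i-\alpha}}{\Gamma(i-\alpha+1)}$, is the faithful specialization of Theorem~\ref{relation}; the display the paper attaches to this corollary shows $\frac{t^{-\alpha}}{\Gamma(1-\alpha)}$ instead, independent of $i$, which appears to be a typographical error in the paper (that factor corresponds to $i=0$, yet the sum starts at $i=1$), so where your result and the paper's display disagree, yours is the correct one.
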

\begin{align}
 \prescript{RL}{0}{D^{\alpha}_{t}}\int_{0}^{t}f(t-s)g(s)\mathrm{d}s&=
 \prescript{C}{0}{D^{\alpha}_{t}}\int_{0}^{t}f(t-s)g(s)\mathrm{d}s\nonumber\\
 &+\sum_{i=1}^{n-1}\sum_{l=1}^{i}\lim\limits_{s\to t-0}\frac{\partial^{i-l}}{\partial t^{i-l}}f(t-s)\left[ \frac{d^{l-1}}{dt^{l-1}}\lim\limits_{s\to t-0}g(s)\right] _{t=0}\frac{t^{-\alpha}}{\Gamma(1-\alpha)}, \quad t>0.
 \end{align}

\begin{coroll}
	The fractional Leibniz rule for Riemann-Liouville and Caputo type fractional differential operators coincides for  $0<\alpha\leq1$:
	\begin{equation}
	\prescript{RL}{t_{0}}{D^{\alpha}_{t}}\int_{t_{0}}^{t}K(t,s)\mathrm{d}s=\prescript{C}{t_{0}}{D^{\alpha}_{t}}\int_{t_{0}}^{t}K(t,s)\mathrm{d}s, \quad t\in\mathbb{\hat{J}},
	\end{equation}
	\begin{equation}
	\prescript{RL}{0}{D^{\alpha}_{t}}\int_{0}^{t}f(t-s)g(s)\mathrm{d}s=\prescript{C}{0}{D^{\alpha}_{t}}\int_{0}^{t}f(t-s)g(s)\mathrm{d}s, \quad t>0.
	\end{equation}
\end{coroll}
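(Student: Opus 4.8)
The plan is to obtain both identities from the $\alpha\in(0,1]$ specialization \eqref{formula} of the relationship between the Caputo and Riemann-Liouville operators, rather than from Theorem \ref{relation} directly, since the latter is stated only for $n\geq2$. For $0<\alpha\leq1$ we have $n=1$, so that formula applies to any admissible function $\phi$ in the form
\[
\left(\prescript{C}{t_{0}}{D^{\alpha}_{t}}\phi\right)(t)=\left(\prescript{RL}{t_{0}}{D^{\alpha}_{t}}\phi\right)(t)-\frac{(t-t_{0})^{-\alpha}}{\Gamma(1-\alpha)}\phi(t_{0}),\quad t\in\mathbb{\hat{J}}.
\]

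First I would set $\phi(t)=\int_{t_{0}}^{t}K(t,s)\,\mathrm{d}s$ and record the key observation that $\phi(t_{0})=\int_{t_{0}}^{t_{0}}K(t_{0},s)\,\mathrm{d}s=0$, because the integral over the degenerate interval $[t_{0},t_{0}]$ vanishes. Substituting $\phi$ into the displayed relationship and using $\phi(t_{0})=0$ annihilates the single correction term, leaving
\[
\prescript{C}{t_{0}}{D^{\alpha}_{t}}\int_{t_{0}}^{t}K(t,s)\,\mathrm{d}s=\prescript{RL}{t_{0}}{D^{\alpha}_{t}}\int_{t_{0}}^{t}K(t,s)\,\mathrm{d}s,
\]
which is the first claimed identity. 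This reproduces the computation already carried out inside the proof of Theorem \ref{Leibniz rule-Caputo}, so the two derivations are consistent.

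For the convolution identity I would specialize $K(t,s)=f(t-s)g(s)$ with $t_{0}=0$. Then $\phi(t)=\int_{0}^{t}f(t-s)g(s)\,\mathrm{d}s$ and again $\phi(0)=\int_{0}^{0}f(-s)g(s)\,\mathrm{d}s=0$, so the identical one-line cancellation removes the correction term and yields the second identity. Equivalently, one may view the whole statement as the $n=1$ instance of Theorem \ref{relation}: the double sum $\sum_{i=1}^{n-1}\sum_{l=1}^{i}(\cdots)$ degenerates to the empty sum $\sum_{i=1}^{0}$ and therefore contributes nothing.

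I do not expect a genuine obstacle here. The only point requiring care is the vanishing of the boundary value $\phi(t_{0})=0$, which rests on nothing more than the integral over a single point being zero, together with the fact that for $\alpha\in(0,1]$ there is precisely one such correction term to cancel.
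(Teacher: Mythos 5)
Your proposal is correct and takes essentially the same approach as the paper: the paper's own justification is exactly the computation carried out inside the proof of Theorem \ref{Leibniz rule-Caputo}, where the relationship \eqref{formula} is applied to $\phi(t)=\int_{t_{0}}^{t}K(t,s)\,\mathrm{d}s$ and the single correction term is annihilated because $\phi(t_{0})=0$. The convolution identity follows by the same specialization $K(t,s)=f(t-s)g(s)$, $t_{0}=0$, precisely as you describe.
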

 \section{Fractional Green's function method}\label{Green's function method}
 The Laplace transform is a convenient technique for solving the Cauchy problem associated with multi-term FDEs with constant coefficients. For instance, let us consider linear in-homogeneous FDE with multi-orders in Caputo's sense and constant coefficients:
 \begin{equation}\label{1}
 \left\lbrace \prescript{C}{0}{D}^{\alpha_{n}}_{t}+\lambda_{1}\prescript{C}{0}{D}^{\alpha_{n-1}}_{t}+\lambda_{2}\prescript{C}{0}{D}^{\alpha_{n-2}}_{t}+\ldots+\lambda_{n-1}\prescript{C}{0}{D}^{\alpha_{1}}_{t}+\lambda_{n}\right\rbrace y(t)=g(t), \quad t>0,
 \end{equation}
 under the homogeneous initial conditions:
 \begin{equation}\label{2}
 y^{(k)}(0)=0, \quad k=0,1,\ldots,n-1,
 \end{equation}
 where $\prescript{C}{0}{D}^{\alpha_{i}}_{(\cdot)}y(\cdot),   i=1,2,\ldots,n$, are the Caputo fractional differentiation operators of orders  $i-1\leq\alpha_{i}\leq i$, \quad $\lambda_{i} \in \mathbb{R}$ for $i=1,2,\ldots,n$ denote constants and $g\in C([0,\infty),\mathbb{R})$ is the continuous force or input function.
 
 The classical analogue of the same problem is considered by Miller in \cite{Miller}. Let us consider IVP for $n$-th order linear differential equation with constant coefficients:
 \begin{equation}\label{D}
 \left\lbrace D^{n}+\lambda_{1}D^{n-1}+\lambda_{2}D^{n-2}+\ldots+\lambda_{n-1}D+\lambda_{n}\right\rbrace y(t)=0, \quad t>0,
 \end{equation}
 with zero initial conditions
 \begin{equation}
 D^{k}y(0)=0, \quad 0 \leq k \leq n-1.
 \end{equation}
 The fractional Green's function is a very useful and applicable practical as well as theoretical tool for solving the IVP (\ref{1})-(\ref{2}) for multi-order FDE.
 
 If we let 
 
\begin{equation}
P(x)=x^{n}+\lambda_{1}x^{n-1}+\lambda_{2}x^{n-2}+\ldots\lambda_{n-1}x+\lambda_{n}
\end{equation}
be a polynomial which is related to the equation \eqref{D},
then according to the Laplace transform method, the unique solution of the following differential system:
\begin{equation}
\begin{cases}
P(D)y(t)=g(t), \quad t>0,\\\nonumber
D^{k}y(0)=0, \quad k=0,1,\ldots,n-1,
\end{cases}
\end{equation}
can be represented in terms of a convolution integral
\begin{equation}
y(t)=\int\limits_{0}^{t}H(t-s)g(s)\mathrm{d}s, \quad t>0,
\end{equation}
where $H(\cdot)$ is the Green or weight function associated with the differential operator $P(D)$ that is evaluated by taking inverse Laplace transform of the transfer function.

\subsection{Applications of fractional Leibniz rules}
In this subsection, we study applications of the  fractional Leibniz integral rule in Riemann-Liouville and Caputo sense using the generalized Bagley-Torvik equations. Moreover, we have used  Leibniz integral rule for checking candidate solution of the oscillator equation in classical sense.

In the following cases, to obtain analytical representation of solutions for the Cauchy problem we will apply fractional Green's function method as we mentioned in Section \ref{Green's function method}.

\textbf{Case 1:}
 We consider the IVP for generalized Bagley-Torvik equations with Riemann-Liouville fractional derivatives of order $1<\alpha \leq 2$ and $0<\beta \leq 1$ in the form of:
\begin{align}\label{RiLi}
\begin{cases}
\left( \prescript{RL}{0}{D}^{\alpha}_{t}y\right) (t)-\mu \left( \prescript{RL}{0}{D}^{\beta}_{t}y\right) (t)-\lambda y(t)=g(t),\quad t>0,\\
\prescript{}{0}{I^{2-\alpha}_{t}}y(t)|_{t=0}=\prescript{}{0}{I^{1-\alpha}_{t}}y(t)|_{t=0}=0, \quad  \quad \lambda,\mu \in \mathbb{R}.
\end{cases}
\end{align}

\begin{thm}\label{thm:main}
	A unique solution $y \in C^{2}([0,\infty) ,\mathbb{R})$ of the Cauchy problem \eqref{RiLi} has the following formula:
	\begin{equation}\label{solRiLi}
	y(t)=\int\limits_{0}^{t}(t-s)^{\alpha-1}E_{\alpha,\alpha-\beta,\alpha}(\lambda(t-s)^{\alpha},\mu(t-s)^{\alpha-\beta})g(s)\mathrm{d}s.
	\end{equation}
\end{thm}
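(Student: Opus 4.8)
The plan is to obtain the candidate solution by the fractional Green's function method through the Laplace transform, and then to confirm it by direct substitution, using the fractional Leibniz integral rule of Corollary \ref{coroll-RL} together with the differentiation formula of Lemma \ref{Lem:biML-RL}, that \eqref{solRiLi} indeed solves \eqref{RiLi}. Uniqueness in the class $C^{2}([0,\infty),\mathbb{R})$ then follows from the injectivity of the Laplace transform.

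First I would apply the Laplace transform to the governing equation in \eqref{RiLi}. Since $\alpha \in (1,2]$ and $\beta \in (0,1]$, the transform rule \eqref{lap-RL}, written explicitly as in Remark \ref{remLap-1}, gives
\begin{align*}
\mathscr{L}\left\{\prescript{RL}{0}{D}^{\alpha}_{t}y\right\}(s) &= s^{\alpha}Y(s) - \left(\prescript{RL}{0}{D}^{\alpha-1}_{t}y\right)(0) - s\left(\prescript{RL}{0}{D}^{\alpha-2}_{t}y\right)(0),\\
\mathscr{L}\left\{\prescript{RL}{0}{D}^{\beta}_{t}y\right\}(s) &= s^{\beta}Y(s) - \left(\prescript{RL}{0}{D}^{\beta-1}_{t}y\right)(0).
\end{align*}
The two coefficients in the first line are precisely $\prescript{}{0}{I^{1-\alpha}_{t}}y(0)$ and $\prescript{}{0}{I^{2-\alpha}_{t}}y(0)$ (reading the negative-order Riemann--Liouville operators as the corresponding integral/derivative), both of which vanish by the homogeneous data in \eqref{RiLi}; the coefficient in the second line, $\prescript{}{0}{I^{1-\beta}_{t}}y(0)$, also vanishes since $\left|\prescript{}{0}{I^{1-\beta}_{t}}y(t)\right| \le C\,t^{1-\beta}\to 0$ as $t\to 0^{+}$ for continuous $y$. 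Hence, writing $G(s)=\mathscr{L}\{g\}(s)$, the transformed equation collapses to the algebraic relation $\left(s^{\alpha}-\mu s^{\beta}-\lambda\right)Y(s)=G(s)$, so that $Y(s)=\left(s^{\alpha}-\mu s^{\beta}-\lambda\right)^{-1}G(s)$.

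Next I would identify the fractional Green's function as the inverse transform of the transfer function $\left(s^{\alpha}-\mu s^{\beta}-\lambda\right)^{-1}$. Applying Lemma \ref{lem:2} with $\gamma=0$ yields
\begin{equation*}
H(t)=\mathscr{L}^{-1}\left\{\frac{1}{s^{\alpha}-\mu s^{\beta}-\lambda}\right\}(t)=t^{\alpha-1}E_{\alpha,\alpha-\beta,\alpha}\!\left(\lambda t^{\alpha},\mu t^{\alpha-\beta}\right),
\end{equation*}
and the convolution theorem then delivers $y=H\ast g$, which is exactly \eqref{solRiLi}. To close the argument I would verify \eqref{solRiLi} by substitution: using Corollary \ref{coroll-RL} to move $\prescript{RL}{0}{D}^{\alpha}_{t}$ and $\prescript{RL}{0}{D}^{\beta}_{t}$ inside the convolution integral, evaluating the limit terms $\lim_{s\to t-0}\prescript{RL,t}{s}{D^{\alpha-l}_{t}}H(t-s)$ produced by the Leibniz rule, and applying Lemma \ref{Lem:biML-RL} to differentiate the kernel $t^{\alpha-1}E_{\alpha,\alpha-\beta,\alpha}$ under the integral sign. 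The resulting pieces should recombine to reproduce the left-hand side of the equation (with the forcing $g(t)$ emerging from the boundary limit term), and an analogous limit computation should confirm that the prescribed homogeneous initial conditions hold.

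The main obstacle I anticipate is the verification-by-substitution step rather than the formal Laplace inversion. The delicate point is the bookkeeping of the limit/boundary terms generated by the fractional Leibniz rule of Corollary \ref{coroll-RL}: one must track the behaviour of the partial Riemann--Liouville derivatives of orders $\alpha-l$ of the kernel as $s\to t^{-}$, using the reduction in Lemma \ref{Lem:biML-RL} of $\gamma$ by $\alpha$ at each step, and show that these terms either vanish or assemble into precisely the forcing $g(t)$, while simultaneously checking the measurability, integrability and domination hypotheses (a)--(d) of Theorem \ref{thm-class} for $K(t,s)=H(t-s)g(s)$. A secondary technical point is justifying the Laplace-domain manipulations themselves, namely the exponential boundedness of $y$ and $g$ and the admissibility of term-by-term inversion in Lemma \ref{lem:2}, together with the $C^{2}$-regularity claim, which should follow from the smoothing effect of convolving $g$ with the locally integrable kernel $H$.
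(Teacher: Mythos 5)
Your proposal follows essentially the same route as the paper: apply the Laplace transform with the homogeneous initial data to obtain $\left(s^{\alpha}-\mu s^{\beta}-\lambda\right)Y(s)=G(s)$, invert via Lemma \ref{lem:2} (with $\gamma=0$) and the convolution theorem to get \eqref{solRiLi}, and then verify by substitution using Corollary \ref{coroll-RL}, Lemma \ref{Lem:biML-RL} and the cancellation/assembly of the boundary terms into $g(t)$ — exactly the paper's derivation and its subsequent ``verification by substitution.'' Your explicit justification that the initial-condition terms (including the $\prescript{RL}{0}{D}^{\beta-1}_{t}y(0)$ term) vanish is a small but welcome addition that the paper glosses over.
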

\begin{proof}
	We assume that \eqref{RiLi} has a unique solution $y(t)$ and $g(t)$ is continuous on $\left[ 0, \infty\right)$ and exponentially bounded, then $y(t)$, $\left( \prescript{RL}{0}D^{\alpha}_{t} y\right) (t)$, and $\left( \prescript{RL}{0}D^{\beta}_{t} y\right) (t)$ are exponentially bounded, thus their Laplace transform exist.
	
	Applying Laplace integral transform for Riemann-Liouville fractional derivative using the formula \eqref{lap-RL} to the both sides of \eqref{RiLi} yields:
	\begin{align}\label{eq:f2}
	\left( s^{\alpha}-\mu s^{\beta}-\lambda\right)Y(s)=G(s).
	\end{align}
	Then we solve \eqref{eq:f2} with respect to $Y(s)$,
	\allowdisplaybreaks
	\begin{align}\label{eq:f3}
	Y(s)=\frac{G(s)}{s^{\alpha}-\mu s^{\beta}-\lambda}. 
    \end{align}
	Taking inverse Laplace transform of \eqref{eq:f3} and applying Lemma \ref{lem:2}, we find an explicit representation of solution to \eqref{RiLi}:
	\begin{align} \label{lastpart}
	y(t)=\int\limits_{0}^{t}(t-s)^{\alpha-1}E_{\alpha,\alpha-\beta,\alpha}(\lambda (t-s)^{\alpha}, \mu (t-s)^{\alpha-\beta})g(s)\mathrm{d}s.
	\end{align}
\end{proof}
\textbf{Verification by substitution}. Having found explicit form for $y(t)$, it remains to confirm that $y(t)$ is an analytical solution of \eqref{RiLi} indeed. 
Firstly, for make the use of checking by substitution, we apply fractional  Leibniz integral rule in Riemann-Liouville sense for the first and second terms of \eqref{RiLi}. Then the first term will be as follows:

\begin{align*}
\left( \prescript{RL}{0}{D}^{\alpha}_{t}y\right) (t)&=\prescript{RL}{0}{D}^{\alpha}_{t}\int\limits_{0}^{t}(t-s)^{\alpha-1}E_{\alpha,\alpha-\beta,\alpha}(\lambda(t-s)^{\alpha},\mu(t-s)^{\alpha-\beta})g(s)\mathrm{d}s\\
&=\sum_{l=1}^{2}\lim\limits_{s\to t-0}\prescript{RL,t}{0}{D}^{\alpha-l}_{t}(t-s)^{\alpha-1}E_{\alpha,\alpha-\beta,\alpha}(\lambda(t-s)^{\alpha},\mu(t-s)^{\alpha-\beta})\frac{d^{l-1}}{dt^{l-1}}\lim\limits_{s\to t-0}g(s)\\
&+\int\limits_{0}^{t}\prescript{RL,t}{0}{D}^{\alpha}_{t}(t-s)^{\alpha-1}E_{\alpha,\alpha-\beta,\alpha}(\lambda(t-s)^{\alpha},\mu(t-s)^{\alpha-\beta})g(s)\mathrm{d}s\\
&=\lim\limits_{s\to t-0}\prescript{RL,t}{0}{D}^{\alpha-1}_{t}(t-s)^{\alpha-1}E_{\alpha,\alpha-\beta,\alpha}(\lambda(t-s)^{\alpha},\mu(t-s)^{\alpha-\beta})\lim\limits_{s\to t-0}g(s)\\
&+\lim\limits_{s\to t-0}\prescript{RL,t}{0}{D}^{\alpha-2}_{t}(t-s)^{\alpha-1}E_{\alpha,\alpha-\beta,\alpha}(\lambda(t-s)^{\alpha},\mu(t-s)^{\alpha-\beta})\frac{d}{dt}\lim\limits_{s\to t-0}g(s)\\
&+\int\limits_{0}^{t}\prescript{RL,t}{0}{D}^{\alpha}_{t}(t-s)^{\alpha-1}E_{\alpha,\alpha-\beta,\alpha}(\lambda(t-s)^{\alpha},\mu(t-s)^{\alpha-\beta})g(s)\mathrm{d}s\\
&=\lim\limits_{s\to t-0}E_{\alpha,\alpha-\beta,1}(\lambda(t-s)^{\alpha},\mu(t-s)^{\alpha-\beta})\lim\limits_{s\to t-0}g(s)\\
&+\lim\limits_{s\to t-0}(t-s)E_{\alpha,\alpha-\beta,2}(\lambda(t-s)^{\alpha},\mu(t-s)^{\alpha-\beta})\frac{d}{dt}\lim\limits_{s\to t-0}g(s)\\
&+\int\limits_{0}^{t}\prescript{RL,t}{0}{D}^{\alpha}_{t}(t-s)^{\alpha-1}E_{\alpha,\alpha-\beta,\alpha}(\lambda(t-s)^{\alpha},\mu(t-s)^{\alpha-\beta})g(s)\mathrm{d}s.
\end{align*}
From now on, we apply Pascal's rule for binomial coefficients to the first term of above expression and the limit of the second term is equal to zero as $s\to t-0$, we obtain

\begin{align*}
\left( \prescript{RL}{0}{D}^{\alpha}_{t}y\right) (t)&=\prescript{RL}{0}{D}^{\alpha}_{t}\int\limits_{0}^{t}(t-s)^{\alpha-1}E_{\alpha,\alpha-\beta,\alpha}(\lambda(t-s)^{\alpha},\mu(t-s)^{\alpha-\beta})g(s)\mathrm{d}s\\
&=g(t)+\lambda\lim\limits_{s\to t-0}(t-s)^{\alpha}E_{\alpha,\alpha-\beta,\alpha+1}(\lambda(t-s)^{\alpha},\mu(t-s)^{\alpha-\beta})\lim\limits_{s\to t-0}g(s)\\
&+\mu \lim\limits_{s\to t-0}(t-s)^{\alpha-\beta}E_{\alpha,\alpha-\beta,\alpha-\beta+1}(\lambda(t-s)^{\alpha},\mu(t-s)^{\alpha-\beta})\lim\limits_{s\to t-0}g(s)\\
&+\int\limits_{0}^{t}\prescript{RL,t}{0}{D}^{\alpha}_{t}(t-s)^{\alpha-1}E_{\alpha,\alpha-\beta,\alpha}(\lambda(t-s)^{\alpha},\mu(t-s)^{\alpha-\beta})g(s)\mathrm{d}s\\
&=g(t)+\int\limits_{0}^{t}\prescript{RL,t}{0}{D}^{\alpha}_{t}(t-s)^{\alpha-1}E_{\alpha,\alpha-\beta,\alpha}(\lambda(t-s)^{\alpha},\mu(t-s)^{\alpha-\beta})g(s)\mathrm{d}s.
\end{align*}
Now, using Lemma \ref{Lem:biML-RL} and again applying Pascal's rule for the expression under above integral, we attain
\begin{align*}
&\int\limits_{0}^{t}\prescript{RL,t}{0}{D}^{\alpha}_{t}(t-s)^{\alpha-1}E_{\alpha,\alpha-\beta,\alpha}(\lambda(t-s)^{\alpha},\mu(t-s)^{\alpha-\beta})g(s)\mathrm{d}s\\
=&\int\limits_{0}^{t}(t-s)^{-1}E_{\alpha,\alpha-\beta,0}(\lambda(t-s)^{\alpha},\mu(t-s)^{\alpha-\beta})g(s)\mathrm{d}s\\
=&\int\limits_{0}^{t}\sum_{l=0}^{\infty}\sum_{k=0}^{\infty}\binom{l+k}{k}\frac{\lambda^{l}\mu^{k}(t-s)^{l\alpha+k(\alpha-\beta)-1}}{\Gamma(l\alpha+k(\alpha-\beta))}g(s)\mathrm{d}s\\
=&\int\limits_{0}^{t}\frac{(t-s)^{-1}}{\Gamma(0)}g(s)\mathrm{d}s+\int\limits_{0}^{t}\sum_{l=1}^{\infty}\sum_{k=0}^{\infty}\binom{l+k-1}{k}\frac{\lambda^{l}\mu^{k}(t-s)^{l\alpha+k(\alpha-\beta)-1}}{\Gamma(l\alpha+k(\alpha-\beta))}g(s)\mathrm{d}s\\
+&\int\limits_{0}^{t}\sum_{l=0}^{\infty}\sum_{k=1}^{\infty}\binom{l+k-1}{k-1}\frac{\lambda^{l}\mu^{k}(t-s)^{l\alpha+k(\alpha-\beta)-1}}{\Gamma(l\alpha+k(\alpha-\beta))}g(s)\mathrm{d}s\\
=&\int\limits_{0}^{t}\sum_{l=0}^{\infty}\sum_{k=0}^{\infty}\binom{l+k}{k}\frac{\lambda^{l+1}\mu^{k}(t-s)^{(l+1)\alpha+k(\alpha-\beta)-1}}{\Gamma((l+1)\alpha+k(\alpha-\beta))}g(s)\mathrm{d}s\\
+&\int\limits_{0}^{t}\sum_{l=0}^{\infty}\sum_{k=0}^{\infty}\binom{l+k}{k}\frac{\lambda^{l}\mu^{k+1}(t-s)^{l\alpha+(k+1)(\alpha-\beta)-1}}{\Gamma(l\alpha+(k+1)(\alpha-\beta))}g(s)\mathrm{d}s\\
=&\lambda\int\limits_{0}^{t}(t-s)^{\alpha-1}E_{\alpha,\alpha-\beta,\alpha}(\lambda(t-s)^{\alpha},\mu(t-s)^{\alpha-\beta})g(s)\mathrm{d}s\\
+&\mu\int\limits_{0}^{t}(t-s)^{\alpha-\beta-1}E_{\alpha,\alpha-\beta,\alpha-\beta}(\lambda(t-s)^{\alpha},\mu(t-s)^{\alpha-\beta})g(s)\mathrm{d}s.
\end{align*}

Therefore, we have 
\begin{align}\label{alpha}
\left( \prescript{RL}{0}{D}^{\alpha}_{t}y\right)(t) 
&=g(t)+\lambda\int\limits_{0}^{t}(t-s)^{\alpha-1}E_{\alpha,\alpha-\beta,\alpha}(\lambda(t-s)^{\alpha},\mu(t-s)^{\alpha-\beta})g(s)\mathrm{d}s\nonumber\\
&+\mu\int\limits_{0}^{t}(t-s)^{\alpha-\beta-1}E_{\alpha,\alpha-\beta,\alpha-\beta}(\lambda(t-s)^{\alpha},\mu(t-s)^{\alpha-\beta})g(s)\mathrm{d}s
.
\end{align}
Similarly, the second term of \eqref{RiLi} will  be
\begin{align}\label{beta}
&\left( \prescript{RL}{0}{D}^{\beta}_{t}y\right)(t)= \prescript{RL}{0}{D}^{\beta}_{t}\int\limits_{0}^{t}(t-s)^{\alpha-1}E_{\alpha,\alpha-\beta,\alpha}(\lambda(t-s)^{\alpha},\mu(t-s)^{\alpha-\beta})g(s)\mathrm{d}s\nonumber\\
&=\int\limits_{0}^{t}(t-s)^{\alpha-\beta-1}E_{\alpha,\alpha-\beta,\alpha-\beta}(\lambda(t-s)^{\alpha},\mu(t-s)^{\alpha-\beta})g(s)\mathrm{d}s.
\end{align}

Taking linear combination of and \eqref{alpha} and \eqref{beta} together with \eqref{solRiLi}, we get the desired result.

 \textbf{Case 2:} 
 We consider the Cauchy problem for generalized Bagley-Torvik equations which is the special case of Caputo type fractional multi-term differential equations with constant coefficients of order $1<\alpha \leq 2$ and $0<\beta \leq 1$ in the form of:
 \begin{align}\label{caputo}
 \begin{cases}
 \left( \prescript{C}{0}{D}^{\alpha}_{t}y\right) (t)-\mu \left( \prescript{C}{0}{D}^{\beta}_{t}y\right) (t)-\lambda y(t)=g(t),\quad t>0,\\
 y(0)=y^{\prime}(0)=0, \quad  \quad \lambda,\mu \in \mathbb{R}.
 \end{cases}
 \end{align}
\begin{thm}\label{thm:main1}
	A unique solution $y \in C^{2}([0,\infty) ,\mathbb{R})$ of the Cauchy problem \eqref{caputo} has the following formula:
	\begin{equation}\label{solC}
	y(t)=\int\limits_{0}^{t}(t-s)^{\alpha-1}E_{\alpha,\alpha-\beta,\alpha}(\lambda(t-s)^{\alpha},\mu(t-s)^{\alpha-\beta})g(s)\mathrm{d}s.
	\end{equation}
\end{thm}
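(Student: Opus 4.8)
The plan is to mirror the Laplace-transform argument of Case~1 (Theorem~\ref{thm:main}), the point being that the homogeneous data $y(0)=y'(0)=0$ make the Caputo transform formula collapse onto exactly the same rational symbol that appeared in the Riemann--Liouville setting. First I would assume, as there, that $g$ is continuous and exponentially bounded, so that $y$, $\prescript{C}{0}{D}^{\alpha}_{t}y$ and $\prescript{C}{0}{D}^{\beta}_{t}y$ possess Laplace transforms. Applying $\mathscr{L}$ to \eqref{caputo} and using the Caputo transform rule \eqref{lap-C} (concretely Remark~\ref{remLap-2} for $\alpha\in(1,2]$ and $\beta\in(0,1]$), every boundary term carries a factor $y(0)$ or $y'(0)$ and hence drops out, giving $\mathscr{L}\{\prescript{C}{0}{D}^{\alpha}_{t}y\}(s)=s^{\alpha}Y(s)$ and $\mathscr{L}\{\prescript{C}{0}{D}^{\beta}_{t}y\}(s)=s^{\beta}Y(s)$. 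The transformed equation is then $(s^{\alpha}-\mu s^{\beta}-\lambda)Y(s)=G(s)$, identical to \eqref{eq:f2}; solving for $Y(s)$ and inverting through Lemma~\ref{lem:2} together with the convolution theorem produces precisely \eqref{solC}. Uniqueness comes from injectivity of the Laplace transform, which determines $Y(s)$ and therefore $y$.

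It then remains to verify by substitution that \eqref{solC} genuinely solves \eqref{caputo}. Instead of confronting the Caputo fractional Leibniz rule \eqref{rel-Cap} with its integral-operator prefactor $\prescript{t}{0}{I^{n-\alpha}_{t}}$ head-on, I would pass through Theorem~\ref{relation}, which writes the Riemann--Liouville Leibniz rule as the Caputo one plus explicit boundary corrections. With kernel $f(t-s)g(s)$, where $f(r)=r^{\alpha-1}E_{\alpha,\alpha-\beta,\alpha}(\lambda r^{\alpha},\mu r^{\alpha-\beta})$ and $n=2$, the single correction term is proportional to $\lim_{s\to t-0}f(t-s)=f(0^{+})$; since $E_{\alpha,\alpha-\beta,\alpha}(0,0)=1/\Gamma(\alpha)$ is finite and $\alpha>1$, the factor $r^{\alpha-1}$ forces $f(0^{+})=0$, so this correction vanishes and $\prescript{C}{0}{D}^{\alpha}_{t}y=\prescript{RL}{0}{D}^{\alpha}_{t}y$ for this particular $y$. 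For the $\beta$-term, $\beta\in(0,1]$, Theorem~\ref{Leibniz rule-Caputo} (equivalently the convolution identity \eqref{special-1}) already gives $\prescript{C}{0}{D}^{\beta}_{t}y=\prescript{RL}{0}{D}^{\beta}_{t}y$. Hence the Riemann--Liouville computations \eqref{alpha} and \eqref{beta} carry over verbatim, and substituting them into the left-hand side of \eqref{caputo} collapses the derivative terms back to $g(t)$.

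The hard part will be the verification rather than the formula: one must justify carefully that no Caputo boundary contribution survives. This reduces to the single estimate $f(0^{+})=\lim_{r\to0^{+}}r^{\alpha-1}E_{\alpha,\alpha-\beta,\alpha}(\lambda r^{\alpha},\mu r^{\alpha-\beta})=0$, valid because $\alpha>1$, together with the endpoint check that at $\alpha=2$ the correction is annihilated anyway by the pole of $\Gamma(i-\alpha+1)=\Gamma(0)$. Once the vanishing of these corrections is secured, the Caputo problem is reduced entirely to the Riemann--Liouville computation of Case~1, and no new analytic estimates are needed.
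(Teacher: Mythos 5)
Your Laplace-transform derivation of \eqref{solC} is exactly the paper's proof of Theorem~\ref{thm:main1}: the homogeneous data kill every boundary term in \eqref{lap-C}, the transformed equation is the same symbol $\left(s^{\alpha}-\mu s^{\beta}-\lambda\right)Y(s)=G(s)$ as in the Riemann--Liouville case, and Lemma~\ref{lem:2} with the convolution theorem inverts it. Where you genuinely diverge is the verification by substitution, which the paper carries out after the proof to confirm that \eqref{solC} actually solves \eqref{caputo}. The paper first applies Pascal's rule to the kernel --- precisely because $\prescript{C}{s}{D}^{\alpha}_{t}\bigl((t-s)^{\alpha-1}/\Gamma(\alpha)\bigr)$ is undefined by \eqref{Cap} --- to reach \eqref{main2}, and then runs its Caputo Leibniz rule \eqref{CFLRC} on each piece together with Lemma~\ref{Lem:biML}, using \eqref{special-1} for the $\beta$-term. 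You instead invoke Theorem~\ref{relation} with $n=2$: the single correction term is proportional to $f(0^{+})g(0)$ with $f(r)=r^{\alpha-1}E_{\alpha,\alpha-\beta,\alpha}(\lambda r^{\alpha},\mu r^{\alpha-\beta})$, and $f(0^{+})=0$ because $\alpha>1$ (while at $\alpha=2$ the factor $1/\Gamma(2-\alpha)=1/\Gamma(0)=0$ annihilates it anyway), so $\prescript{C}{0}{D}^{\alpha}_{t}y=\prescript{RL}{0}{D}^{\alpha}_{t}y$ for this particular $y$; combined with Theorem~\ref{Leibniz rule-Caputo} for the $\beta$-term, the whole Caputo verification collapses onto the Riemann--Liouville computations \eqref{alpha} and \eqref{beta} of Case~1. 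Both routes are valid at the paper's level of rigor (neither checks the regularity hypotheses of the Leibniz theorems in detail, nor the initial conditions explicitly). Your reduction is more economical and makes structurally transparent why the two Cauchy problems \eqref{RiLi} and \eqref{caputo} share one solution under zero data, which is exactly the content of the paper's remark following Theorem~\ref{thm:main1}; the paper's longer computation is deliberate, since the stated purpose of Case~2 is to exhibit the new Caputo-type fractional Leibniz integral rule in action rather than to bypass it.
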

\begin{proof}
	We assume that \eqref{caputo} has a unique solution $y(t)$ and $g(t)$ is continuous on $\left[ 0, \infty\right)$ and exponentially bounded, then $y(t)$, $\left( \prescript{C}{0}D^{\alpha}_{t} y\right) (t)$, and $\left( \prescript{C}{0}D^{\beta}_{t} y\right) (t)$ are exponentially bounded, thus their Laplace transform exist.
	
	Applying the formula of Laplace transform for Caputo fractional derivative \eqref{lap-C} to the both sides of \eqref{caputo} yields:
	\begin{align}\label{eq:f}
	\left( s^{\alpha}-\mu s^{\beta}-\lambda\right)Y(s)=G(s).
	\end{align}
	Then we solve \eqref{eq:f} with respect to $Y(s)$,
	\allowdisplaybreaks
	\begin{align}\label{eq:f5}
	Y(s)=\frac{G(s)}{s^{\alpha}-\mu s^{\beta}-\lambda}. 
	\end{align}
	Taking inverse Laplace transform of \eqref{eq:f5} and applying Lemma \ref{lem:2}, we find an explicit representation of solution to \eqref{caputo}:
	\begin{align} \label{lastpart1}
	y(t)=\int\limits_{0}^{t}(t-s)^{\alpha-1}E_{\alpha,\alpha-\beta,\alpha}(\lambda (t-s)^{\alpha}, \mu (t-s)^{\alpha-\beta})g(s)\mathrm{d}s.
	\end{align}
\end{proof}
\begin{remark}
Since initial conditions equal to zero,  in accordance with the formula \eqref{relationship}, analytical solutions should be coincide with each other for the Cauchy problems in Riemann-Liouville \eqref{RiLi} and Caputo \eqref{caputo} senses.
\end{remark}
\textbf{Verification by substitution}. Having found explicit form for $y(t)$, it remains to confirm that $y(t)$ is an analytical solution of \eqref{caputo} indeed.

Now, we again make use of checking by substitution via Caputo fractional Leibniz integral rule. In this case, we apply first Pascal's rule before applying fractional Leibniz rule since $\prescript{C}{s}{D}^{\alpha}_{t}\left( \frac{(t-s)^{\alpha-1}}{\Gamma(\alpha)}\right) $ is undefined in accordance with \eqref{Cap}. Then according to the formula \eqref{R-L der-R-L int}, we obtain
\begin{align}\label{main2}
&\left( \prescript{C}{0}{D}^{\alpha}_{t}y\right) (t)=\prescript{C}{0}{D}^{\alpha}_{t}\int\limits_{0}^{t}(t-s)^{\alpha-1}E_{\alpha,\alpha-\beta,\alpha}(\lambda(t-s)^{\alpha},\mu(t-s)^{\alpha-\beta})g(s)\mathrm{d}s\nonumber\\&=g(t)+\lambda\prescript{C}{0}{D}^{\alpha}_{t}\int\limits_{0}^{t}(t-s)^{2\alpha-1}E_{\alpha,\alpha-\beta,2\alpha}(\lambda(t-s)^{\alpha},\mu(t-s)^{\alpha-\beta})g(s)\mathrm{d}s\nonumber\\
&+\mu\prescript{C}{0}{D}^{\alpha}_{t}\int\limits_{0}^{t}(t-s)^{2\alpha-\beta-1}E_{\alpha,\alpha-\beta,2\alpha-\beta}(\lambda(t-s)^{\alpha},\mu(t-s)^{\alpha-\beta})g(s)\mathrm{d}s.
\end{align}

Then using the formula for fractional Leibniz integral rule in Caputo sense \eqref{CFLRC} of order $1<\alpha\leq2$, we have 

\begin{align*}
&\prescript{C}{0}{D}^{\alpha}_{t}\int\limits_{0}^{t}(t-s)^{2\alpha-1}E_{\alpha,\alpha-\beta,2\alpha}(\lambda(t-s)^{\alpha},\mu(t-s)^{\alpha-\beta})g(s)\mathrm{d}s\nonumber\\
&=\prescript{t}{0}{I^{2-\alpha}_{t}}\Big[\lim\limits_{s\to t-0}\sum_{l=1}^{2}\frac{\partial^{2-l}}{\partial t^{2-l}}(t-s)^{2\alpha-1}E_{\alpha,\alpha-\beta,2\alpha}(\lambda(t-s)^{\alpha},\mu(t-s)^{\alpha-\beta})\frac{d^{l-1}}{dt^{l-1}}\lim\limits_{s\to t-0}g(s)\Big]\nonumber\\
&+\int\limits_{0}^{t}\prescript{C,t}{0}{D}^{\alpha}_{t}(t-s)^{2\alpha-1}E_{\alpha,\alpha-\beta,2\alpha}(\lambda(t-s)^{\alpha},\mu(t-s)^{\alpha-\beta})g(s)\mathrm{d}s\nonumber\\
&=\prescript{t}{0}{I^{2-\alpha}_{t}}\Big[\lim\limits_{s\to t-0}\frac{\partial}{\partial t}(t-s)^{2\alpha-1}E_{\alpha,\alpha-\beta,2\alpha}(\lambda(t-s)^{\alpha},\mu(t-s)^{\alpha-\beta})\lim\limits_{s\to t-0}g(s)\Big]\\
&+\prescript{t}{0}{I^{2-\alpha}_{t}}\Big[\lim\limits_{s\to t-0}(t-s)^{2\alpha-1}E_{\alpha,\alpha-\beta,2\alpha}(\lambda(t-s)^{\alpha},\mu(t-s)^{\alpha-\beta})\frac{d}{dt}\lim\limits_{s\to t-0}g(s)\Big]\\
&+\int\limits_{0}^{t}\prescript{C,t}{0}{D}^{\alpha}_{t}(t-s)^{2\alpha-1}E_{\alpha,\alpha-\beta,2\alpha}(\lambda(t-s)^{\alpha},\mu(t-s)^{\alpha-\beta})g(s)\mathrm{d}s\nonumber\\
&=\prescript{t}{0}{I^{2-\alpha}_{t}}\Big[\lim\limits_{s\to t-0}(t-s)^{2\alpha-2}E_{\alpha,\alpha-\beta,2\alpha-1}(\lambda(t-s)^{\alpha},\mu(t-s)^{\alpha-\beta})\lim\limits_{s\to t-0}g(s)\Big]\\
&+\prescript{t}{0}{I^{2-\alpha}_{t}}\Big[\lim\limits_{s\to t-0}(t-s)^{2\alpha-1}E_{\alpha,\alpha-\beta,2\alpha}(\lambda(t-s)^{\alpha},\mu(t-s)^{\alpha-\beta})\frac{d}{dt}\lim\limits_{s\to t-0}g(s)\Big]\\
&+\int\limits_{0}^{t}\prescript{C,t}{0}{D}^{\alpha}_{t}(t-s)^{2\alpha-1}E_{\alpha,\alpha-\beta,2\alpha}(\lambda(t-s)^{\alpha},\mu(t-s)^{\alpha-\beta})g(s)\mathrm{d}s\nonumber\\
&=\int\limits_{0}^{t}\prescript{C,t}{0}{D}^{\alpha}_{t}(t-s)^{2\alpha-1}E_{\alpha,\alpha-\beta,2\alpha}(\lambda(t-s)^{\alpha},\mu(t-s)^{\alpha-\beta})g(s)\mathrm{d}s.
\end{align*}

Thus, by Lemma \ref{Lem:biML}, we get 
\begin{align}\label{221}
	&\prescript{C}{0}{D}^{\alpha}_{t}\int\limits_{0}^{t}(t-s)^{2\alpha-1}E_{\alpha,\alpha-\beta,2\alpha}(\lambda(t-s)^{\alpha},\mu(t-s)^{\alpha-\beta})g(s)\mathrm{d}s\nonumber\\
	&=\int\limits_{0}^{t}\prescript{C,t}{0}{D}^{\alpha}_{t}(t-s)^{2\alpha-1}E_{\alpha,\alpha-\beta,2\alpha}(\lambda(t-s)^{\alpha},\mu(t-s)^{\alpha-\beta})g(s)\mathrm{d}s\nonumber\\
	&=\int\limits_{0}^{t}(t-s)^{\alpha-1}E_{\alpha,\alpha-\beta,\alpha}(\lambda(t-s)^{\alpha},\mu(t-s)^{\alpha-\beta})g(s)\mathrm{d}s.
\end{align}
Similarly, by applying the formula \eqref{CFLRC}, we also have
\begin{align}\label{222}
	&\prescript{C}{0}{D}^{\alpha}_{t}\int\limits_{0}^{t}(t-s)^{2\alpha-\beta-1}E_{\alpha,\alpha-\beta,2\alpha-\beta}(\lambda(t-s)^{\alpha},\mu(t-s)^{\alpha-\beta})g(s)\mathrm{d}s\nonumber\\
	&=\int\limits_{0}^{t}\prescript{C,t}{0}{D}^{\alpha}_{t}(t-s)^{2\alpha-\beta-1}E_{\alpha,\alpha-\beta,2\alpha-\beta}(\lambda(t-s)^{\alpha},\mu(t-s)^{\alpha-\beta})g(s)\mathrm{d}s\nonumber\\
	&=\int\limits_{0}^{t}(t-s)^{\alpha-\beta-1}E_{\alpha,\alpha-\beta,\alpha-\beta}(\lambda(t-s)^{\alpha},\mu(t-s)^{\alpha-\beta})g(s)\mathrm{d}s.
\end{align}
 
On the other hand, since $0< \beta \leq 1$, we will apply the formula \eqref{special-1} for second term of the equation \eqref{caputo}:

\begin{align}\label{betCFL}
	&\prescript{C}{0}{D}^{\beta}_{t}\int\limits_{0}^{t}(t-s)^{\alpha-1}E_{\alpha,\alpha-\beta,\alpha}(\lambda(t-s)^{\alpha},\mu(t-s)^{\alpha-\beta})g(s)\mathrm{d}s\nonumber\\
	&=\lim\limits_{s\to t-0}\prescript{t}{0}{I}^{1-\beta}_{t}(t-s)^{\alpha-1}E_{\alpha,\alpha-\beta,\alpha}(\lambda(t-s)^{\alpha},\mu(t-s)^{\alpha-\beta})\lim\limits_{s\to t-0}g(s)\nonumber\\
	&+\int\limits_{0}^{t}\prescript{RL,t}{0}{D}^{\beta}_{t}(t-s)^{\alpha-1}E_{\alpha,\alpha-\beta,\alpha}(\lambda(t-s)^{\alpha},\mu(t-s)^{\alpha-\beta})g(s)\mathrm{d}s\nonumber\\
	&=\lim\limits_{s\to t-0}(t-s)^{\alpha-\beta}E_{\alpha,\alpha-\beta,\alpha-\beta+1}(\lambda(t-s)^{\alpha},\mu(t-s)^{\alpha-\beta})\lim\limits_{s\to t-0}g(s)\nonumber\\
	&+\int\limits_{0}^{t}\prescript{RL,t}{0}{D}^{\beta}_{t}(t-s)^{\alpha-1}E_{\alpha,\alpha-\beta,\alpha}(\lambda(t-s)^{\alpha},\mu(t-s)^{\alpha-\beta})g(s)\mathrm{d}s\nonumber\\
	&=\int\limits_{0}^{t}\prescript{RL,t}{0}{D}^{\beta}_{t}(t-s)^{\alpha-1}E_{\alpha,\alpha-\beta,\alpha}(\lambda(t-s)^{\alpha},\mu(t-s)^{\alpha-\beta})g(s)\mathrm{d}s\nonumber\\
	&=\int\limits_{0}^{t}(t-s)^{\alpha-\beta-1}E_{\alpha,\alpha-\beta,\alpha-\beta}(\lambda(t-s)^{\alpha},\mu(t-s)^{\alpha-\beta})g(s)\mathrm{d}s.
\end{align}
where
\begin{equation*}
\prescript{t}{0}{I}^{1-\beta}_{t}\left\lbrace \frac{(t-t_{0})^{\alpha-1}}{\Gamma(\alpha)}\right\rbrace =\frac{(t-t_{0})^{\alpha-\beta}}{\Gamma(\alpha-\beta+1)}, \quad t>0.
\end{equation*}
Next we plug \eqref{221} and \eqref{222} into \eqref{main2}, we therefore get
\begin{align*}
&\prescript{C}{0}{D}^{\alpha}_{t}\int\limits_{0}^{t}(t-s)^{\alpha-1}E_{\alpha,\alpha-\beta,\alpha}(\lambda(t-s)^{\alpha},\mu(t-s)^{\alpha-\beta})g(s)\mathrm{d}s\nonumber\\
&=g(t)+\lambda\int\limits_{0}^{t}(t-s)^{\alpha-1}E_{\alpha,\alpha-\beta,\alpha}(\lambda(t-s)^{\alpha},\mu(t-s)^{\alpha-\beta})g(s)\mathrm{d}s\nonumber\\
&+\mu\int\limits_{0}^{t}(t-s)^{\alpha-\beta-1}E_{\alpha,\alpha-\beta,\alpha-\beta}(\lambda(t-s)^{\alpha},\mu(t-s)^{\alpha-\beta})g(s)\mathrm{d}s.
\end{align*}

Taking linear combination of above equation together with \eqref{solC} and \eqref{betCFL}, we arrive at
\begin{align*}
\left( \prescript{C}{0}{D}^{\alpha}_{t}y\right) (t)-\mu \left( \prescript{C}{0}{D}^{\beta}_{t}y\right) (t)-\lambda y(t)=g(t), \quad t>0.
\end{align*}

\begin{remark}\label{rem2}
Kilbas et al. \cite{Kilbas-Srivastava-Trujillo} have obtained the analytical solutions of	the Cauchy problems \eqref{RiLi} and \eqref{caputo} in terms of Fox-Wright  functions below: 
	\begin{equation}
	y(t)=\int\limits_{0}^{t}(t-s)^{\alpha-1}H_{\alpha,\beta;\lambda,\mu}(t-s)g(s)\mathrm{d}s, \quad t>0,
	\end{equation}
	where
	\begin{equation*}
	H_{\alpha,\beta;\lambda,\mu}(t)\coloneqq\sum_{l=0}^{\infty}\frac{\lambda^{l}t^{l\alpha}}{l!}\prescript{}{1}{\Psi_{1}}\left[\begin{array}{ccc}
	(l+1,1) \\
	(l\alpha+\alpha, \alpha-\beta)
	\end{array}\Big|\mu t^{\alpha-\beta}\right].
	\end{equation*}
\end{remark}

\begin{proof}
Using the definition of Fox-Wright function \cite{Fox,Wright}, we arrive at
\begin{align*}
	y(t)&=\int\limits_{0}^{t}\sum_{l=0}^{\infty}\frac{\lambda^{l}(t-s)^{l\alpha+\alpha-1}}{l!}\prescript{}{1}{\Psi_{1}}\left[\begin{array}{ccc}
	(l+1,1) \\
	(l\alpha+\alpha, \alpha-\beta)
	\end{array}\Big|\mu (t-s)^{\alpha-\beta}\right]g(s)\mathrm{d}s\\
	&=\int\limits_{0}^{t}\sum_{l=0}^{\infty}\sum_{k=0}^{\infty}\binom{l+k}{k}\frac{\lambda^{l}\mu^{k}(t-s)^{l\alpha+k(\alpha-\beta)+\alpha-1}}{\Gamma(l\alpha+k(\alpha-\beta)+\alpha)}g(s)\mathrm{d}s\\
	&=\int\limits_{0}^{t}(t-s)^{\alpha-1}E_{\alpha,\alpha-\beta,\alpha}(\lambda (t-s)^{\alpha}, \mu (t-s)^{\alpha-\beta})g(s)\mathrm{d}s, \quad t>0.
	\end{align*}
	Therefore, our solution in terms of univariate version of bivariate   Mittag-Leffler type functions coincide with the solution by means of Fox-Wright type functions shown in \cite{Kilbas-Srivastava-Trujillo}.  
\end{proof}

\begin{remark}\label{rem1}
	Podlubny  \cite{I. Podlubny} have attained the analytical solutions of	the Cauchy problems \eqref{RiLi} and \eqref{caputo} in terms of $l$-th derivative of two-parameter Mittag-Leffler functions below: 
	\begin{equation}
	y(t)=\int\limits_{0}^{t}(t-s)^{\alpha-1}H_{\alpha,\beta;\lambda,\mu}(t-s)g(s)\mathrm{d}s, \quad t>0,
	\end{equation}
	where
	\begin{equation*}
	H_{\alpha,\beta;\lambda,\mu}(t)\coloneqq\sum_{l=0}^{\infty}\frac{\lambda^{l}t^{l\alpha}}{l!}\mathcal{E}^{(l)}_{\alpha-\beta,\alpha+l\beta}(\mu t^{\alpha-\beta}).
	\end{equation*}
\end{remark}

\begin{proof}
	Using the definition of $l$-th derivative of two-parameter Mittag-Leffler function, we arrive at
	\begin{align*}
	y(t)&=\int\limits_{0}^{t}\sum_{l=0}^{\infty}\frac{\lambda^{l}(t-s)^{l\alpha+\alpha-1}}{l!}\mathcal{E}^{(l)}_{\alpha-\beta,\alpha+l\beta}(\mu(t-s)^{\alpha-\beta})g(s)\mathrm{d}s\\
	&=\int\limits_{0}^{t}\sum_{l=0}^{\infty}\frac{\lambda^{l}}{l!}(t-s)^{(l+1)\alpha-1}\sum_{k=0}^{\infty}\frac{(l+k)!}{k!}\frac{\mu^{k}(t-s)^{k(\alpha-\beta)}}{\Gamma(k(\alpha-\beta)+l(\alpha-\beta)+l\beta+\alpha)}g(s)\mathrm{d}s\\
	&=\int\limits_{0}^{t}\sum_{l=0}^{\infty}\sum_{k=0}^{\infty}\binom{l+k}{k}\frac{\lambda^{l}\mu^{k}(t-s)^{l\alpha+k(\alpha-\beta)+\alpha-1}}{\Gamma(l\alpha+k(\alpha-\beta)+\alpha)}g(s)\mathrm{d}s\\
	&=\int\limits_{0}^{t}(t-s)^{\alpha-1}E_{\alpha,\alpha-\beta,\alpha}(\lambda (t-s)^{\alpha}, \mu (t-s)^{\alpha-\beta})g(s)\mathrm{d}s, \quad t>0.
	\end{align*}
	Therefore, our solution in terms of univariate version of bivariate   Mittag-Leffler type functions coincide with the solution by means of $l$-th derivative of two-parameter Mittag-Leffler type functions shown in \cite{I. Podlubny}.  
\end{proof}
\textbf{Case 3:} 
In special case, we substitute $\alpha=2$ and $\beta=1$ in \eqref{RiLi} and \eqref{caputo}, then we get the following Cauchy problem for the  classical second order linear differential equation - the oscillator equation with constant coefficients:
\begin{equation}\label{classical}
	\begin{cases}
		y^{\prime\prime}(t)-\mu y^{\prime}(t)-\lambda y(t)=g(t), \quad t>0,\\
		y^{\prime}(0)=y(0)=0, \quad \lambda,\mu \in \mathbb{R},
	\end{cases}
\end{equation}

\begin{thm}\label{thm:main3}
	A unique solution $y \in C^{2}([0,\infty) ,\mathbb{R})$ of the Cauchy problem \eqref{classical} has the following formula:
	\begin{equation}\label{sol-class}
y(t)=\int\limits_{0}^{t}(t-s)E_{2,1,2}(\lambda(t-s)^{2},\mu(t-s))g(s)\mathrm{d}s, \quad t>0.
	\end{equation}
\end{thm}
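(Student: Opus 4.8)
The plan is to treat \eqref{classical} as the boundary case $\alpha=2$, $\beta=1$ of the problems handled in Theorems \ref{thm:main} and \ref{thm:main1}, so the argument runs exactly parallel to those proofs. First I would assume $g$ is continuous on $[0,\infty)$ and exponentially bounded, which forces $y,y',y''$ to be exponentially bounded and guarantees the existence of their Laplace transforms. Because the initial data vanish, the classical transform rules $\mathscr{L}\{y''\}(s)=s^2Y(s)$ and $\mathscr{L}\{y'\}(s)=sY(s)$ (the $n=2$ instance of the Caputo formula \eqref{lap-C}) reduce \eqref{classical} to the algebraic identity $(s^2-\mu s-\lambda)Y(s)=G(s)$, and hence
\begin{equation*}
Y(s)=\frac{G(s)}{s^2-\mu s-\lambda}.
\end{equation*}

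The one substantive step is inverting the transfer function $\tfrac{1}{s^2-\mu s-\lambda}$, which is precisely Lemma \ref{lem:2} specialized to $\alpha=2$, $\beta=1$, $\gamma=0$. The admissibility hypotheses $\alpha>\beta$ and $\alpha>\gamma$ hold since $2>1$ and $2>0$, so the lemma gives
\begin{equation*}
\mathscr{L}^{-1}\Bigl\{\frac{1}{s^2-\mu s-\lambda}\Bigr\}(t)=t\,E_{2,1,2}(\lambda t^2,\mu t).
\end{equation*}
Writing $Y(s)=G(s)\cdot\tfrac{1}{s^2-\mu s-\lambda}$ and applying the convolution theorem then produces the representation \eqref{sol-class}, completing the proof.

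I do not expect a genuine obstacle here, since no fractional machinery survives in the limit and the computation is a direct specialization of the two preceding cases; the only points needing care are confirming the convergence conditions of Lemma \ref{lem:2} at the integer values $\alpha=2,\beta=1$ and checking that the bivariate series $E_{2,1,2}$ indeed reproduces the classical Green's function. For the verification by substitution that should accompany the statement, I would differentiate $y$ via the classical Leibniz rule for convolutions \eqref{classconvol} with $n=2$: writing $f(r)=r\,E_{2,1,2}(\lambda r^2,\mu r)$, the boundary value $f(0)=0$ kills one boundary term while $f'(0)=1$ produces exactly $g(t)$ from the other, and since $f$ satisfies the homogeneous equation $f''-\mu f'-\lambda f=0$ for $r>0$ the remaining integral vanishes, yielding $y''-\mu y'-\lambda y=g$.
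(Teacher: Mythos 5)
Your proposal is correct, but it takes a different route from the paper's own proof of this theorem. The paper proves Theorem \ref{thm:main3} purely by verification by substitution: it applies the classical Leibniz rule \eqref{classconvol} with $n=2$ to compute $y''$, obtaining $g(t)+\int_0^t(t-s)^{-1}E_{2,1,0}(\lambda(t-s)^2,\mu(t-s))g(s)\,\mathrm{d}s$, then uses Pascal's rule on the double series (together with $1/\Gamma(0)=0$) to split that kernel as $\lambda(t-s)E_{2,1,2}+\mu E_{2,1,1}$, computes $y'$ by differentiating under the integral, and takes the linear combination $y''-\mu y'-\lambda y=g$; it never rederives the formula by Laplace transform for this case. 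You instead derive the formula constructively, exactly as the paper does for Theorems \ref{thm:main} and \ref{thm:main1}: reduce \eqref{classical} to $(s^2-\mu s-\lambda)Y(s)=G(s)$, invert via Lemma \ref{lem:2} with $\alpha=2$, $\beta=1$, $\gamma=0$ (your check of the hypotheses $2>1$, $2>0$ is right, and the specialization $t\,E_{2,1,2}(\lambda t^2,\mu t)$ is correct), and apply the convolution theorem; your verification step then replaces the paper's explicit series manipulation by the classical Green's function argument, using $f(0)=0$, $f'(0)=1$ and the homogeneous equation $f''-\mu f'-\lambda f=0$. Both are sound. Your derivation has the advantage of being uniform with Cases 1 and 2 and of explaining where the kernel comes from, and your verification is shorter; but note that your assertion $f''-\mu f'-\lambda f=0$ is exactly the content the paper establishes by Pascal's rule (equivalently, it follows from $\mathscr{L}\{f\}(s)=1/(s^2-\mu s-\lambda)$), so you have not avoided that computation, only relocated it — and the paper's choice to do it explicitly is deliberate, since the point of this section is to exercise the Leibniz integral rule machinery rather than to give the quickest ODE argument.
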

\begin{proof}
By using verification by substitution, we have 
\begin{align}\label{main3}
	&\frac{d^{2}}{dt^{2}}\int\limits_{0}^{t}(t-s)E_{2,1,2}(\lambda(t-s)^{2},\mu(t-s))g(s)\mathrm{d}s\nonumber\\
	&=\sum_{l=1}^{2}\lim\limits_{s\to t-0}\frac{\partial^{2-l}}{\partial t^{2-l}}(t-s)E_{2,1,2}(\lambda(t-s)^{2},\mu(t-s))\frac{d^{l-1}}{dt^{l-1}}\lim\limits_{s\to t-0}g(s)\nonumber\\
	&+\int\limits_{0}^{t}\frac{\partial^{2}}{\partial t^{2}}(t-s)E_{2,1,2}(\lambda(t-s)^{2},\mu(t-s))g(s)\mathrm{d}s\nonumber\\
	&=\lim\limits_{s\to t-0}\frac{\partial}{\partial t}(t-s)E_{2,1,2}(\lambda(t-s)^{2},\mu(t-s))\lim\limits_{s\to t-0}g(s)\nonumber\\
	&+\lim\limits_{s\to t-0}(t-s)E_{2,1,2}(\lambda(t-s)^{2},\mu(t-s))\frac{d}{dt}\lim\limits_{s\to t-0}g(s)\nonumber\\
	&+\int\limits_{0}^{t}\frac{\partial^{2}}{\partial t^{2}}(t-s)E_{2,1,2}(\lambda(t-s)^{2},\mu(t-s))g(s)\mathrm{d}s\nonumber\\
	&=\lim\limits_{s\to t-0}E_{2,1,1}(\lambda(t-s)^{2},\mu(t-s))\lim\limits_{s\to t-0}g(s)\nonumber\\
	&+\int\limits_{0}^{t}(t-s)^{-1}E_{2,1,0}(\lambda(t-s)^{2},\mu(t-s))g(s)\mathrm{d}s\nonumber\\
	&=g(t)+\lambda  \lim\limits_{s\to t-0}(t-s)^{2}E_{2,1,3}(\lambda(t-s)^{2},\mu(t-s))g(s)\mathrm{d}s\nonumber\\
	&+\mu 
	\lim\limits_{s\to t-0}(t-s)E_{2,1,2}(\lambda(t-s)^{2},\mu(t-s))g(s)\mathrm{d}s\nonumber\\
	&+\int\limits_{0}^{t}(t-s)^{-1}E_{2,1,0}(\lambda(t-s)^{2},\mu(t-s))g(s)\mathrm{d}s\nonumber\\
	&=g(t)+\int\limits_{0}^{t}(t-s)^{-1}E_{2,1,0}(\lambda(t-s)^{2},\mu(t-s))g(s)\mathrm{d}s.
\end{align}
Applying Pascal's rule for binomial coefficients for the last term of above equality, we get
\begin{align}\label{alphafirst}
&\int\limits_{0}^{t}(t-s)^{-1}E_{2,1,0}(\lambda(t-s)^{2},\mu(t-s))g(s)\mathrm{d}s\nonumber\\
&=\int\limits_{0}^{t}\frac{(t-s)^{-1}}{\Gamma(0)}g(s)\mathrm{d}s+\lambda\int\limits_{0}^{t}(t-s)E_{2,1,2}(\lambda(t-s)^{2},\mu(t-s))g(s)\mathrm{d}s\nonumber\\
&+\mu\int\limits_{0}^{t}E_{2,1,1}(\lambda(t-s)^{2},\mu(t-s))g(s)\mathrm{d}s\nonumber\\
&=\lambda\int\limits_{0}^{t}(t-s)E_{2,1,2}(\lambda(t-s)^{2},\mu(t-s))g(s)\mathrm{d}s\nonumber\\
&+\mu\int\limits_{0}^{t}E_{2,1,1}(\lambda(t-s)^{2},\mu(t-s))g(s)\mathrm{d}s.
\end{align}

Next, we have
\begin{align}\label{betafirst}
	&\frac{d}{dt}\int\limits_{0}^{t}(t-s)E_{2,1,2}(\lambda(t-s)^{2},\mu(t-s))g(s)\mathrm{d}s\nonumber\\
	&=\int\limits_{0}^{t}\frac{\partial}{\partial t}(t-s)E_{2,1,2}(\lambda(t-s)^{2},\mu(t-s))g(s)\mathrm{d}s\nonumber\\
	&=\int\limits_{0}^{t}E_{2,1,1}(\lambda(t-s)^{2},\mu(t-s))g(s)\mathrm{d}s.
\end{align}

Again taking linear combination of above equations \eqref{alphafirst} and  \eqref{betafirst} together with \eqref{sol-class}, we prove the desired result.
\end{proof}
\section{Conclusions and future work} \label{concl}
The theory of  Leibniz integral rule allows us to study particular solutions of classical and fractional multi-term differential equations. To the best of our knowledge, we derive explicit analytical solutions of well-known Bagley-Torvik and oscillator equations in terms of bivariate Mittag-Leffler functions via the technique of fractional Green's function, since this theory has not been presented in recent literature.

The major contributions of our research work are as below:
\begin{itemize}
	\item we have proposed a  Leibniz rule for higher order derivatives in classical sense which is more productive tool for testing solutions of multi-order differential equation;
	\item we have introduced  fractional Leibniz rule for Riemann-Liouville and Caputo type fractional differentiation operators;
	\item we have investigated differentiation of convolution operator which is more crucial in theory of differential equations with constant coefficients of classical and fractional-order derivatives;
	\item  analytical explicit solutions of the generalized Bagley-Torvik  and oscillator equations are derived in terms of univariate version of bivariate Mittag-Leffler type functions in accordance with the method of Laplace integral transform;
	\item We have showed that our analytical solutions are coincide with Fox-Wright type and $l$-th derivative of two-parameter Mittag-Leffler type functions;
	\item we tested the candidate solutions of Cauchy problems for Bagley-Torvik equations with fractional-order sense and oscillator equation with classical-order one via our new  fractional Leibniz integral rules.
\end{itemize}

There are a number of potential directions in which the results acquired here can be extended. Our future work will proceed to study the Leibniz integral rule results for $\psi$-Hilfer and Hadamard type fractional derivatives and the analytical explicit solutions of multi-term fractional differential equations in terms of natural extensions of Mittag-Leffler type functions.

\end{document}